\documentclass[a4paper]{amsart}
\usepackage{amsmath,amsthm,amssymb,latexsym,epic,bbm,comment,mathrsfs}
\usepackage{graphicx,enumerate,stmaryrd,xcolor}
\usepackage[all,2cell,knot]{xy}
\xyoption{2cell}

\newtheorem{theorem}{Theorem}
\newtheorem{lemma}[theorem]{Lemma}
\newtheorem{corollary}[theorem]{Corollary}
\newtheorem{proposition}[theorem]{Proposition}

\newtheorem{example}[theorem]{Example}

\usepackage[all]{xy}
\usepackage[active]{srcltx}
\usepackage[parfill]{parskip}
\usepackage{enumerate}

\newcommand{\tto}{\twoheadrightarrow}

\begin{document}

\title[Hybrid KL basis and parabolic induction]
{Hybrid Kazhdan-Lusztig basis and parabolic induction}
\author[V.~Mazorchuk and S.~Srivastava]{Volodymyr 
Mazorchuk and Shraddha Srivastava}

\begin{abstract}
We provide a categorification for the hybrid 
Kazhdan-Lusztig bases of the Hecke algebra by
constructing and investigating  a series of 
new stratified structures  on the principal 
block of BGG category $\mathcal{O}$.
\end{abstract}

\maketitle

\section{Introduction and description of the results}\label{s0}

\subsection{Background}\label{s0.1}

Kazhdan-Lusztig combinatorics of the Hecke algebra of 
a finite Weyl group, which originates from \cite{KL}, 
is an intensively studied very useful tool with far
reaching applications in various areas, notably, 
representation theory. For instance, this combinatorics
describes various composition multiplicities for structural
modules in the principal block of the BGG category 
$\mathcal{O}$ associated to a triangular decomposition
of a semi-simple finite dimensional complex Lie algebra, 
see \cite{BGG,Hu}.

Arguably, the most famous ingredient of Kazhdan-Lusztig 
combinatorics is the {\em Kazhdan-Lusztig basis} of the 
Hecke algebra, see \cite{KL}, which has remarkable 
positivity properties. The coefficients of the transformation
matrix between the standard and the Kazhdan-Lusztig bases
are called {\em Kazhdan-Lusztig polynomials}. The coefficients
of these polynomials determine, among other things, 
(graded) composition multiplicities of simple subquotients 
of Verma modules, see \cite{KL,Hu,BB81,BK81}. 

The connection between the Kazhdan-Lusztig basis
and the principal block of 
category $\mathcal{O}$ for a Lie algebra $\mathfrak{g}$
is given by the Grothendieck decategorification of the latter.
In more detail, let $\mathcal{O}_0$ be the principal block of 
$\mathcal{O}$. Then $\mathcal{O}_0$ admits a 
$\mathbb{Z}$-graded lift ${}^{\mathbb{Z}}\mathcal{O}_0$ and 
the Grothendieck group of ${}^{\mathbb{Z}}\mathcal{O}_0$ is
naturally isomorphic to the Hecke algebra of the Weyl group of
$\mathfrak{g}$. The Grothendieck group of ${}^{\mathbb{Z}}\mathcal{O}_0$
has three natural bases: the basis of simple modules, the basis of 
Verma modules and the basis of indecomposable projective modules.
It turns out that, if we fix an isomorphism between the 
Grothendieck group of ${}^{\mathbb{Z}}\mathcal{O}_0$ 
and the Hecke algebra sending Verma modules
to the standard basis, this isomorphism will match indecomposable
projective modules with the Kazhdan-Lusztig basis. Simple modules
will then be matched with the so-called {\em dual Kazhdan-Lusztig basis}.
And the positivity properties of the Kazhdan-Lusztig basis are then
explained by the fact that indecomposable projective modules in 
$\mathcal{O}_0$ have Verma flags, that is, filtrations with Verma
subquotients. In the language of \cite{CPS0}, this means that 
$\mathcal{O}_0$ is a {\em highest weight category}, alternatively, that
the associative algebra whose module category is equivalent to 
$\mathcal{O}_0$ is {\em quasi-hereditary}, see \cite{DR}.

\subsection{Motivation}\label{s0.2}

The recent preprint \cite{BMS} investigates a series of the so-called
{\em hybrid Kazhdan-Lusztig bases} of the Hecke algebra, defined in
\cite{GH07}, and establishes for them a number of interesting 
positivity results. The datum necessary to define a hybrid basis
consists of the choice of a parabolic subgroup of the Weyl group and
the choice of a side (left/right). The elements of the hybrid basis 
are certain products of the elements of the Kazhdan-Lusztig basis for
this parabolic subgroup with certain elements of the standard basis
for the whole group (these latter elements are indexed by  shortest 
coset representatives of the group modulo the subgroup).
The choice of a side reflects 
the choice of left or right cosets, alternatively, the order of 
multiplication of the factors. The two extreme choices of parabolic 
subgroups give the Kazhdan-Lusztig basis and the standard basis, 
respectively.

The paper \cite{BMS} establishes positivity for the coefficients 
of the transformation matrix between two hybrid bases if one of
the corresponding parabolic subgroups is contained in the other one. 
This naturally leads to the following question: does this
positivity phenomenon have a categorical interpretation? This 
question is our principal motivation for the present paper.

\subsection{Results}\label{s0.3}

In the present paper we give a complete positive answer to our
main motivation question. Here is a brief summary of our
main results:
\begin{itemize}
\item Given a parabolic subgroup of the Weyl group and a choice of
the left/right side, we show that ${}^{\mathbb{Z}}\mathcal{O}_0$
has a stratified structure, in the sense of \cite{CPS,Fr}, such that
the standard modules with respect to this structure match with the 
elements of the corresponding hybrid Kazhdan-Lusztig basis,
see Theorems~\ref{thm-main-1} and \ref{thm-main2}. 
\item We describe titling modules with respect to such a stratified
structure, see Propositions~\ref{prop-tilting1} and \ref{prop-tilting2}.
\item We determine natural duals for the hybrid Kazhdan-Lusztig basis,
see Subsection~\ref{s3.5}.
\item As an auxiliary technical result, we discover new relations between 
projective and shuffling functors, see Lemma~\ref{lem-tilt2-1}.
\end{itemize}

One of the axioms of a stratified structure is that projective objects
must have a filtration with standard subquotients. Given one parabolic
subgroup that is a subgroup of another parabolic subgroup, we show
that the standard objects for the second structure have a filtration
whose subquotients are standard modules for the first structure,
see Corollaries~\ref{cor-s3.3-5} and \ref{cor-s4.3-57}. This provides
a categorical explanation of the positivity results of \cite{GH07,BMS}.

\subsection{Methods}\label{s0.4}

We use a wide variety of different methods and techniques, mostly of algebraic,
combinatorial and homological nature. Several of our results have
unexpectedly involved proofs. To start with, it is worth to point
out that a choice of the left/right side is not at all symmetric.
In fact, at the combinatorial level, it is symmetric, but, at the
categorical level, it is not. The reason for that is the observation
in \cite[Remark~1.2]{MS2} that $\mathcal{O}_0$ does not have a self-equivalence
which induces the map $w\mapsto w^{-1}$, where $w$ is an element of
the Weyl group, on the indices of the isomorphism classes of simple
objects.

In the case when our parabolic subgroup acts on the left, we 
construct our stratified structure using parabolic induction,
in the spirit of \cite{FKM}. Here are approach and arguments are
natural adaptations of those in \cite{FKM}. An additional 
simplification in this case is that all constructions are naturally
adjusted to the action of projective functors.

In the case when our parabolic subgroup acts on the right, none of 
these are available and we need to employ various tricks. For example,
we use the equivalence from \cite{So86} to transfer 
proper standard modules from the other side. However, the major 
problem arises when trying to determine the tilting modules for
this structure. On the other side, this is not too difficult as
we can use the action of projective functors, just like in the 
classical case. But here, this is not available and we were forces to
go in a roundabout way, using certain equivalences for Serre 
subquotients established in \cite{CMZ} as well as various ideas
from $\nabla$-filtration dimension of modules which go back to
\cite{Pa,MO}.

Our most annoying proof is that of a technical auxiliary lemma 
that establishes certain relations in the singular braid monoid
of our Weyl group. The relevance of this to the present paper 
is the observation of \cite{MS2,J-Z} that it is this monoid
that acts on $\mathcal{O}_0$ via shuffling and projective functors.
Out wannabe tilting objects are defined using shuffling functors
and our standard objects are defined using projective functors.
This is where all this comes from. For our proof, we need the 
property that the conjugation with the longest element in the 
Weyl group rearranges the singular generators of
the singular braid monoid in the same way
as it rearranges the simple reflections of the Weyl group. 
Unfortunately, we could not find any elegant argument to prove this
and hence were forced to do a brute force case-by-case walk-through.
As one can expect, types $E_7$ and, especially, $E_8$ are not at all pretty
and require a lot of space, as well as computations assistance of 
SageMath.

\subsection{Structure}\label{s0.5}

The paper is organized as follows. 
In Section~\ref{s1}, we collected all necessary preliminaries about 
category $\mathcal{O}$ and its combinatorics.
Section~\ref{s2} goes into detail of parabolic induction, which is 
our main technical tool for construction of stratified structures
in the case when the parabolic subgroup acts on the left.
This case is dealt with in Section~\ref{s3}, which starts with some 
general preliminaries on stratified algebras. The main result here
is Theorem~\ref{thm-main-1} in Subsection~\ref{s3.2} which 
asserts existence of a stratified structure. Many other 
structural properties of this structure are given in 
subsections that follow. In particular, tilting modules are described
in Proposition~\ref{prop-tilting1} which can be found in 
Subsection~\ref{s3.6}.

The case of the parabolic subgroup acting on the right is considered 
in Section~\ref{s4}, where the main result is 
Theorem~\ref{thm-main2} in  Subsection~\ref{s4.2}.
Again, many other  structural properties of the
corresponding stratified structure are given in 
subsections that follow. In particular, tilting modules are described
in Proposition~\ref{prop-tilting2} which can be found in 
Subsection~\ref{s4.6}. One of the auxiliary statement here,
namely, Lemma~\ref{lem-tilt2-1}, is given without a proof.
In fact, this proof is postponed until Section~\ref{s5},
where it is given with all details, however, using a 
brute force case-by-case analysis of all finite irreducible
root systems. As one could expect in such case, type $E_7$
and, especially, type $E_8$ look quite ugly.

\subsection*{Acknowledgements}
The first author is partially supported by the Swedish Research Council.
All computations were done using SageMath. We used Microsoft Copilot
as a general help for SageMath coding.
\vspace{5mm}

\section{Category $\mathcal{O}$ and its combinatorics}\label{s1}

\subsection{Setup}\label{s1.1}

In this paper, we work over $\mathbb{C}$. For a vector space $V$,
we denote by $V^*$ the dual space $\mathrm{Hom}_\mathbb{C}(V,\mathbb{C})$.

Let $\mathfrak{g}$ be a 
semi-simple finite dimensional Lie algebra with a fixed triangular 
decomposition
\begin{equation}\label{eq-01}
\mathfrak{g}=\mathfrak{n}_-\oplus \mathfrak{h}\oplus \mathfrak{n}_+. 
\end{equation}
Here $\mathfrak{h}$ is a fixed Cartan subalgebra. Let $\mathbf{R}$
be the root system of the pair $(\mathfrak{g},\mathfrak{h})$.
For $\alpha\in \mathbf{R}$, we fix a non-zero element
$X_\alpha$ in the corresponding root space $\mathfrak{g}_\alpha$.
Let $W$ be the Weyl group of $\mathbf{R}$. Let $\pi$ be the basis of
$\mathbf{R}$ which corresponds to the triangular decomposition
\eqref{eq-01} and $\mathbf{R}=\mathbf{R}_-\cup \mathbf{R}_+$ the corresponding
decomposition of $\mathbf{R}$ into negative and positive roots.
We denote by $\rho$ the half of the sum of all positive roots.
We also consider the dot-action of $W$ on $\mathfrak{h}^*$
defined via $w\cdot \lambda:=w(\lambda+\rho)-\rho$, for
$\lambda\in \mathfrak{h}^*$ and $w\in W$. As usual, we denote
by $w_0$ the longest element of $W$.

We let $S$ denote the set of all simple reflections in $W$ which 
correspond to the basis $\pi$. For any subset $J\subset S$,
we denote by $W(J)$ the parabolic subgroup of $W$ which is generated
by all reflections in $J$. In particular, $W(S)=W$.
We denote by $W_J$ and ${}_JW$ the sets of shortest representatives
in the cosets from $W/_{W(J)}$ and 
${}_{W(J)}\hspace{-1mm}\setminus\hspace{-1mm} W$, respectively.
We denote by $w_0^{J}$ the longest element in $W(J)$.

Denote by $U(\mathfrak{g})$ the universal enveloping algebra of 
$\mathfrak{g}$.
Recall that a $\mathfrak{g}$-module $M$ is called a {\em weight module}
provided that
\begin{displaymath}
M=\bigoplus_{\lambda\in\mathfrak{h}^*}M_\lambda, \,
\text{ where } \,
M_\lambda:=\{m\in M\,:\, h\cdot m =\lambda(h)m\text{ for all }
h\in\mathfrak{h}\}.
\end{displaymath}
The support $\mathrm{supp}(M)$ of a weight module $M$ is the set of all
$\lambda\in \mathfrak{h}^*$ such that $M_\lambda\neq 0$.
Another way to describe a weight module is to say that the 
action of $\mathfrak{h}$ on such a module is diagonalizable.

\subsection{Category $\mathcal{O}$}\label{s1.2}

Consider the Bernstein-Gelfand-Gelfand category 
$\mathcal{O}=\mathcal{O}(\mathfrak{g})$
associated to the triangular decomposition
\eqref{eq-01}, see \cite{BGG,Hu}. This category consists of all
finitely generated weight $U(\mathfrak{g})$-modules the action of 
$U(\mathfrak{n}_+)$ on which is locally finite.

Simple objects in $\mathcal{O}$ are, up to isomorphism, 
the simple highest weight modules $L(\lambda)$, where
$\lambda\in\mathfrak{h}^*$ is the highest weight of
$L(\lambda)$. We denote by $\Delta(\lambda)$ the Verma
module with highest weight $\lambda$, see \cite{Di,Hu}.
We denote by $P(\lambda)$ and $I(\lambda)$ the 
indecomposable projective cover and injective envelope
of $L(\lambda)$, respectively. We also denote by
$T(\lambda)$ the indecomposable tilting module with 
highest weight $\lambda$, see \cite{CI,Ri,Hu}.

We denote by $\star$ the contravariant duality on
$\mathcal{O}$ which preserves the isomorphisms classes of
simple modules, that is $L(\lambda)^\star\cong L(\lambda)$,
for all $\lambda\in\mathfrak{h}^*$. Then we also have
$P(\lambda)\cong I(\lambda)^\star$. Using $\star$,
we can define the {\em dual Verma modules}
$\nabla(\lambda):=\Delta(\lambda)^\star$, 
for $\lambda\in\mathfrak{h}^*$. 

In case several algebras are around, to avoid confusion we will
sometimes use the algebra as a subscript in the notation of 
a module. For example, we can write $\Delta_\mathfrak{g}(\lambda)$
to specify that this is the Verma module $\Delta(\lambda)$
over the algebra $\mathfrak{g}$.

\subsection{Principal block}\label{s1.3}

Consider the principal block $\mathcal{O}_0$ of 
$\mathcal{O}$, that is, the direct summand containing the
trivial $\mathfrak{g}$-module $L(0)$. Simple objects
in $\mathcal{O}_0$ are bijectively indexed by
the elements in $W$ as follows: $L_w:=L(w\cdot 0)$.
We will then also similarly simplify the remaining notation
for the structural modules in $\mathcal{O}_0$:
$\Delta_w:=\Delta(w\cdot 0)$, $\nabla_w:=\nabla(w\cdot 0)$, 
$P_w:=P(w\cdot 0)$, $I_w:=I(w\cdot 0)$ and $T_w:=T(w\cdot 0)$.

We denote by $A$ the (unique up to isomorphism) basic
finite dimensional associative algebra for which 
the category $A$-mod of finite dimensional modules
is equivalent to $\mathcal{O}_0$. The algebra 
$A$ is Koszul, see \cite{So}, so we can fix a 
positive Koszul $\mathbb{Z}$-grading on $A$ as follows:
\begin{displaymath}
A\cong\bigoplus_{i\in\mathbb{Z}}A_i. 
\end{displaymath}

\subsection{Graded lift}\label{s1.4}

We denote by ${}^{\mathbb{Z}}\mathcal{O}_0$ the category of
finite-dimensional graded $A$-modules
\begin{displaymath}
M\cong\bigoplus_{i\in\mathbb{Z}}M_i. 
\end{displaymath}
Morphisms in 
${}^{\mathbb{Z}}\mathcal{O}_0$ are homogeneous maps of
degree $0$. We denote by $\langle 1\rangle$ the 
auto-equivalence of ${}^{\mathbb{Z}}\mathcal{O}_0$
which shifts the grading as follows:
$(M\langle 1\rangle)_i=M_{i+1}$, for $i\in\mathbb{Z}$
and $M\in {}^{\mathbb{Z}}\mathcal{O}_0$.
We have the obvious functor $\mathbf{F}$ from 
${}^{\mathbb{Z}}\mathcal{O}_0$ to
$\mathcal{O}_0$ which forgets the grading.
We will say that $X\in \mathcal{O}_0$
{\em admits graded lift} provided that there exists
$Y\in {}^{\mathbb{Z}}\mathcal{O}_0$ such that 
$X\cong\mathbf{F}(Y)$. If an indecomposable 
$X\in \mathcal{O}_0$ admits graded lift, then this
lift is unique up to isomorphism and shift of grading, 
see \cite{St}.

All structural modules in $\mathcal{O}_0$ admit graded lifts,
see \cite{MO3}.
Given $w\in W$, we denote the graded lift of 
$L_w$ concentrated in degree $0$ also by $L_w$,
abusing notation for simplicity. Similarly,
the graded lifts of $\Delta_w$ and $P_w$ which
cover $L_w$ in the graded sense are denoted
by the same respective symbols. Next, the graded
lifts of $I_w$ and $\nabla_w$ which envelop 
$L_w$ in the graded sense are denoted by the 
same respective symbol. Finally, the graded 
lift of $T_w$ which envelops $\Delta_w$
(or, equivalently, covers $\nabla_w$) in the 
graded sense is denoted by the same symbol.

\subsection{Hecke algebra}\label{s1.5}

Let $\mathbb{A}:=\mathbb{Z}[v,v^{-1}]$.
Let $\mathbf{H}$ be the Hecke algebra of $(W,S)$.
The algebra $\mathbf{H}$ is an $\mathbb{A}$-algebra
with the standard basis $\{H_w\,:\,w\in W\}$ and
the multiplication determined by the braid relations
in $W$ as well as the following quadratic relations
for the generators $H_s$, where $s\in S$:
\begin{displaymath}
H_s^2=(v^{-1}-v)H_s+H_e. 
\end{displaymath}
We denote by $\{\underline{H}_w\,:\,w\in W\}$
the Kazhdan-Lusztig basis of $\mathbf{H}$, see \cite{KL}.
Note that we are using the normalization of \cite{So1}.
The entries $\{p_{y,x}\,:\,x,y\in W\}$
of the transformation matrix between these two bases
are called Kazhdan-Lusztig polynomials:
\begin{displaymath}
\underline{H}_x=\sum_{y\in W}p_{y,x}H_y,\quad x,y\in W.
\end{displaymath}
We note that $p_{y,x}\neq 0$ implies that
$y\leq x$ with respect to the Bruhat order on $W$.

We also denote by $\{\underline{\hat{H}}_w\,:\,w\in W\}$
the dual Kazhdan-Lusztig basis of $\mathbf{H}$, defined
via $\tau(\underline{\hat{H}}_x\underline{H}_{y^{-1}})=\delta_{x,y}$,
the Kronecker symbol. Here $\tau$ is the standard trace on $\mathbf{H}$
given by $\tau(H_w)=\delta_{w,e}$.

\subsection{Projective functors}\label{s1.6}

For $w\in W$, we denote by $\theta_w$ the indecomposable projective
endofunctor of $\mathcal{O}_0$ which is defined uniquely, up to 
isomorphism, via $\theta_w P_e\cong P_w$, see \cite{BG}.
We denote by $\mathscr{P}$ the monoidal category of projective
endofunctors of $\mathcal{O}_0$.

Each $\theta_w$ admits a graded lift which we denote by the same
symbol, see \cite{St}. Then $\theta_w P_e\cong P_w$, now in the graded sense.
We denote by ${}^\mathbb{Z}\mathscr{P}$ the monoidal category of 
graded projective endofunctors of ${}^\mathbb{Z}\mathcal{O}_0$.

\subsection{Connection}\label{s1.7}

Consider the Grothendieck group 
$\mathbf{Gr}({}^\mathbb{Z}\mathcal{O}_0)$ of 
${}^\mathbb{Z}\mathcal{O}_0$. Fix an isomorphism between 
$\mathbf{Gr}({}^\mathbb{Z}\mathcal{O}_0)$ and
$\mathbf{H}$ by sending $[\Delta_w]$, where $w\in W$,
to $H_w$, and interpreting $v$ as $\langle -1\rangle$.
Under this isomorphism, $[P_w]$ is sent to 
$\underline{H}_w$ and $[L_w]$ is sent to $\underline{\hat{H}}_w$.

Consider the split Grothendieck ring
$\mathbf{Gr}_{\oplus}({}^\mathbb{Z}\mathscr{P})$ of 
${}^\mathbb{Z}\mathscr{P}$. Fix an isomorphism between 
$\mathbf{Gr}_{\oplus}({}^\mathbb{Z}\mathscr{P})$ and
to $\mathbf{H}$ by sending $[\theta_w]$, where $w\in W$,
to $\underline{H}_w$, and interpreting $v$ as $\langle -1\rangle$.

As all projective functors are exact, the group 
$\mathbf{Gr}({}^\mathbb{Z}\mathcal{O}_0)$ is naturally
a $\mathbf{Gr}_{\oplus}({}^\mathbb{Z}\mathscr{P})$-module.
Under the above isomorphisms, this corresponds to the right
regular $\mathbf{H}$-module $\mathbf{H}_\mathbf{H}$.

Note that, for $M,N\in {}^\mathbb{Z}\mathcal{O}_0$, we have
\begin{equation}\label{bilinearform}
([M],[N]):=\tau([M][N])=\sum_{i,j\in\mathbb{Z}}
(-1)^iv^j\dim\mathcal{D}^b({}^\mathbb{Z}\mathcal{O}_0)
(M[-i]\langle j\rangle,N^\star),
\end{equation}
as both are linear in $M$ and $N$ and agree when $M$ and $N$ 
independently run through all Verma modules
(see also \cite[Subsection~3.4]{CM} and \cite{KM}).

\subsection{Other regular integral blocks}\label{s1.8}

Recall that a weight $\lambda\in\mathfrak{h}^*$ is called
{\em regular} if its dot-stabilizer is trivial, that is,
$|\{w\cdot \lambda\,:\,w\in W\}|=|W|$. Also, a weight
$\lambda\in\mathfrak{h}^*$ is called {\em integral}
if it appears in the support of some finite dimensional
$\mathfrak{g}$-module. A weight is called {\em dominant}
provided that $\Delta(\lambda)=P(\lambda)$.
Note that regular, integral, dominant weights are
exactly the highest weights of simple finite dimensional
$\mathfrak{g}$-modules.

For a regular, integral, dominant weight $\lambda\in\mathfrak{h}^*$, we
can consider the corresponding block $\mathcal{O}_\lambda$
of $\mathcal{O}$ defined as the direct summand containing 
$L(\lambda)$. Alternatively, $\mathcal{O}_\lambda$ is the 
Serre subcategory of $\mathcal{O}$ generated by all
$L(w\cdot\lambda)$, where $w\in W$. The principal
block $\mathcal{O}_0$ is just a special case of this,
for $\lambda=0$. The blocks 
$\mathcal{O}_\lambda$ and $\mathcal{O}_0$ are equivalent
via an equivalence which matches $L(w\cdot\lambda)$ with
$L_w$, for $w\in W$.

\section{Parabolic induction and restriction}\label{s2}

\subsection{Parabolic subalgebras}\label{s2.1}

Recall that $S$ denotes the set of all simple reflections in $W$ which 
correspond to the basis $\pi$. For any subset $J\subset S$, we
have the corresponding parabolic subalgebra $\mathfrak{p}=\mathfrak{p}(J)$
of $\mathfrak{g}$. The subalgebra $\mathfrak{p}$ is generated by
$\mathfrak{h}$ and $\mathfrak{n}_+$ as well as by all root subspaces
$\mathfrak{g}_\alpha$, where $\alpha$ is a negative root such that 
$-\alpha$ is a simple root and the reflection with respect to 
$-\alpha$ belongs to $J$. We write
\begin{displaymath}
\mathfrak{p}=\mathfrak{a}\oplus \mathfrak{h}^{\perp}\oplus \mathfrak{n}, 
\end{displaymath}
where 
\begin{itemize}
\item $\mathfrak{a}$ is the semi-simple Levi subalgebra;
\item $\mathfrak{h}^{\perp}\oplus \mathfrak{n}$ is the radical
of $\mathfrak{p}$;
\item $\mathfrak{n}=[\mathfrak{h}^{\perp}\oplus \mathfrak{n},
\mathfrak{h}^{\perp}\oplus \mathfrak{n}]$ is nilpotent;
\item $\mathfrak{h}^{\perp}\subset \mathfrak{h}$ commutes with
$\mathfrak{a}$.
\end{itemize}
We note that 
$\mathfrak{n}$ is the direct sum of all 
$\mathfrak{g}_\alpha$, where $\alpha$ is a positive root which is
not in the linear span of $J$, that
$\mathfrak{a}\oplus \mathfrak{h}^{\perp}$ is reductive
and that $\mathfrak{h}^{\perp}$ is the center of 
$\mathfrak{a}\oplus \mathfrak{h}^{\perp}$. We denote by
$\mathfrak{h}_\mathfrak{a}$ the intersection of $\mathfrak{a}$
with $\mathfrak{h}$. Then we have 
$\mathfrak{h}=\mathfrak{h}_\mathfrak{a}\oplus\mathfrak{h}^{\perp}$.
We also note that, intersecting $\mathfrak{a}$ with the components of the
triangular decomposition for $\mathfrak{g}$, gives a triangular decomposition
for $\mathfrak{a}$.

\begin{example}\label{ex-s2.1-1}
{\em Consider $\mathfrak{g}=\mathfrak{sl}_3$ with standard basis
$e_{12}$, $e_{13}$, $e_{23}$, $e_{21}$, $e_{31}$, $e_{32}$
(the matrix units) as well as $h_1:=e_{11}-e_{22}$
and $h_2=e_{22}-e_{33}$. Then, for the standard triangular
decomposition, we have that $\{e_{12},e_{13},e_{23}\}$
is a basis of $\mathfrak{n}_+$, $\{e_{21},e_{31},e_{32}\}$
is a basis of $\mathfrak{n}_-$ and $\{h_1,h_2\}$ is a
basis of $\mathfrak{h}$.

Let $\alpha$ be the simple root corresponding to $e_{12}$
and $\beta$ be the simple root corresponding to $e_{23}$.
Then, $\pi=\{\alpha,\beta\}$ and, for the subset 
$\{\alpha\}\subset\pi$, the corresponding parabolic
subalgebra $\mathfrak{p}$ contains $\mathfrak{n}_+$
and $\mathfrak{h}$ and has one extra dimension, given
by the additional basis vector $e_{21}$. In this case
$\mathfrak{a}$ has basis $\{e_{12},e_{21},h_1\}$
and is isomorphic to $\mathfrak{sl}_2$. Note that
$\mathfrak{h}_\mathfrak{a}$ has basis $h_1$.
Further,
$\mathfrak{n}$ has basis $\{e_{13},e_{23}\}$. Finally,
$\mathfrak{h}^{\perp}$ is generated by $h_1+2h_2$
(this latter element equals $e_{11}+e_{22}-2e_{33}$
and hence commutes with both $e_{12}$ and $e_{21}$).
\hfill$\blacksquare$
}
\end{example}

\subsection{Parabolic induction}\label{s2.2}

Let $\mathfrak{p}$ be a parabolic subalgebra of $\mathfrak{g}$
as in Subsection~\ref{s2.1}. 

Given an $\mathfrak{a}$-module $M$ and $\eta\in(\mathfrak{h}^{\perp})^*$,
we can set $\mathfrak{n}\cdot M=0$ as well as
$h\cdot m:=\eta(h)m$, for $h\in \mathfrak{h}^{\perp}$
and $m\in M$, and in this way turn $M$
into a $\mathfrak{p}$-module. After that, we can consider the 
induced $\mathfrak{g}$-module
\begin{displaymath}
U(\mathfrak{g})\bigotimes_{U(\mathfrak{p})}M.
\end{displaymath}
The obvious assignment on morphisms defines a functor
\begin{displaymath}
\mathrm{Ind}_{\eta}:\mathfrak{a}\text{-}\mathrm{Mod}\to
\mathfrak{g}\text{-}\mathrm{Mod},
\end{displaymath}
called {\em parabolic induction}. Note that $\mathrm{Ind}_{\eta}$
depends on the choice of both $\mathfrak{p}$ and $\eta$.
Also note that $\mathrm{Ind}_{\eta}$ is exact due to the 
PBW Theorem.

Given $\nu\in(\mathfrak{h}_\mathfrak{a})^*$ and $\eta$ as above,
we can use $\mathfrak{h}=\mathfrak{h}_\mathfrak{a}\oplus\mathfrak{h}^{\perp}$
to define a unique $\mu(\nu,\eta)\in \mathfrak{h}$ whose restrictions to the 
two summands are given by $\nu$ and $\eta$, respectively.

\begin{lemma}\label{lem-s2.2-2}
For $\nu$ and $\eta$ as above, we have
\begin{displaymath}
\Delta_{\mathfrak{g}}(\mu(\nu,\eta)) \cong 
\mathrm{Ind}_{\eta}(\Delta_{\mathfrak{a}}(\nu)).
\end{displaymath}
\end{lemma}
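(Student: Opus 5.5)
The plan is to use transitivity of induction. Write the Borel subalgebra $\mathfrak{b}=\mathfrak{h}\oplus\mathfrak{n}_+$ and set $\mathfrak{b}_\mathfrak{a}:=\mathfrak{h}_\mathfrak{a}\oplus(\mathfrak{n}_+\cap\mathfrak{a})$, the Borel subalgebra of $\mathfrak{a}$ coming from the induced triangular decomposition. By definition,
\begin{displaymath}
\Delta_\mathfrak{a}(\nu)=U(\mathfrak{a})\otimes_{U(\mathfrak{b}_\mathfrak{a})}\mathbb{C}_\nu
\end{displaymath}
and $\Delta_\mathfrak{g}(\mu)=U(\mathfrak{g})\otimes_{U(\mathfrak{b})}\mathbb{C}_\mu$ with $\mu=\mu(\nu,\eta)$. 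First I will check that $\mathfrak{b}\subset\mathfrak{p}$ and that
\begin{displaymath}
\mathfrak{b}=\mathfrak{b}_\mathfrak{a}\oplus\mathfrak{h}^\perp\oplus\mathfrak{n}.
\end{displaymath}
This follows from the root description in Subsection~\ref{s2.1}: a positive root lies either in the span of $J$, in which case its root space is in $\mathfrak{a}\cap\mathfrak{n}_+$, or not, in which case it is in $\mathfrak{n}$. Also $\mathfrak{h}=\mathfrak{h}_\mathfrak{a}\oplus\mathfrak{h}^\perp$.

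The key step is to identify the $\mathfrak{p}$-module obtained from $\Delta_\mathfrak{a}(\nu)$ via $\eta$ with $U(\mathfrak{p})\otimes_{U(\mathfrak{b})}\mathbb{C}_\mu$. Here $\mathbb{C}_\mu$ is the one-dimensional $\mathfrak{b}$-module on which $\mathfrak{n}_+$ acts trivially and $\mathfrak{h}$ acts by $\mu$. The vector $1\otimes 1\in\Delta_\mathfrak{a}(\nu)$, regarded as an element of the $\mathfrak{p}$-module, has the following properties:
\begin{itemize}
\item it is killed by $\mathfrak{n}_+\cap\mathfrak{a}$, by the definition of a Verma module;
\item it is killed by $\mathfrak{n}$, by construction;
\item it has $\mathfrak{h}_\mathfrak{a}$-weight $\nu$ and $\mathfrak{h}^\perp$-weight $\eta$, hence $\mathfrak{h}$-weight $\mu$.
\end{itemize}
So there is a $\mathfrak{p}$-homomorphism $U(\mathfrak{p})\otimes_{U(\mathfrak{b})}\mathbb{C}_\mu\to\Delta_\mathfrak{a}(\nu)$, and it is surjective because $1\otimes1$ generates over $U(\mathfrak{a})\subset U(\mathfrak{p})$.

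To see that this map is injective, I compare sizes using PBW. Since $\mathfrak{p}=(\mathfrak{n}_-\cap\mathfrak{a})\oplus\mathfrak{b}$, the source is free over $U(\mathfrak{n}_-\cap\mathfrak{a})$ of rank one, and so is $\Delta_\mathfrak{a}(\nu)$. The map is $U(\mathfrak{n}_-\cap\mathfrak{a})$-linear and sends generator to generator, so it is an isomorphism. Alternatively, I can match weight space dimensions, which are finite.

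Finally, transitivity of tensor induction gives
\begin{displaymath}
\mathrm{Ind}_\eta(\Delta_\mathfrak{a}(\nu))\cong U(\mathfrak{g})\otimes_{U(\mathfrak{p})}U(\mathfrak{p})\otimes_{U(\mathfrak{b})}\mathbb{C}_\mu\cong U(\mathfrak{g})\otimes_{U(\mathfrak{b})}\mathbb{C}_\mu=\Delta_\mathfrak{g}(\mu).
\end{displaymath}
The only step that needs any care is the identification in the previous two paragraphs. In particular I must verify that the action of $\mathfrak{h}^\perp$ by the scalar $\eta$ is compatible with the $\mathfrak{a}$-action; it is, since $\mathfrak{h}^\perp$ is central in $\mathfrak{a}\oplus\mathfrak{h}^\perp$ and $[\mathfrak{a}\oplus\mathfrak{h}^\perp,\mathfrak{n}]\subset\mathfrak{n}$, so setting $\mathfrak{n}$ to act by $0$ gives a well-defined $\mathfrak{p}$-action. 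Everything else is formal.
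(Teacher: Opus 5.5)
Your proof is correct and is the paper's argument: the paper's one-line proof ("transitivity of induction") leaves implicit exactly the step you supply. That step identifies the inflated $\mathfrak{p}$-module $\Delta_\mathfrak{a}(\nu)$ with $U(\mathfrak{p})\otimes_{U(\mathfrak{b})}\mathbb{C}_{\mu(\nu,\eta)}$, via the PBW decomposition $\mathfrak{p}=(\mathfrak{n}_-\cap\mathfrak{a})\oplus\mathfrak{b}$, after which composing the inductions gives $\Delta_\mathfrak{g}(\mu(\nu,\eta))$.
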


\begin{proof}
This follows directly from the transitivity of induction. 
\end{proof}

\begin{corollary}\label{cor-s2.2-3}
For any $\eta$ as above, the functor
$\mathrm{Ind}_{\eta}$ maps $\mathcal{O}(\mathfrak{a})$
to $\mathcal{O}(\mathfrak{g})$.
\end{corollary}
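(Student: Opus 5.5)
The plan is to reduce to Verma modules using Lemma~\ref{lem-s2.2-2} and the exactness of $\mathrm{Ind}_{\eta}$. Every module in $\mathcal{O}(\mathfrak{a})$ has finite length, and every simple $L_{\mathfrak{a}}(\nu)$ is a quotient of $\Delta_{\mathfrak{a}}(\nu)$. We will check directly the three defining conditions of $\mathcal{O}(\mathfrak{g})$ for $\mathrm{Ind}_\eta(M)$: finite generation, $\mathfrak{h}$-semisimplicity, and local $\mathfrak{n}_+$-finiteness.

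\emph{Step 1: finite generation.} If $M$ is generated over $U(\mathfrak{a})$ by $m_1,\dots,m_k$, then $U(\mathfrak{g})\otimes_{U(\mathfrak{p})}M$ is generated over $U(\mathfrak{g})$ by the elements $1\otimes m_i$.

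\emph{Step 2: weight structure.} By the PBW theorem we have $\mathrm{Ind}_\eta(M)\cong U(\mathfrak{n}^-)\otimes_{\mathbb{C}} M$ as vector spaces, where $\mathfrak{n}^-$ is the sum of the root spaces $\mathfrak{g}_{-\alpha}$ for $\alpha\in\mathbf{R}_+$ outside the span of $J$. The module $M$ becomes a weight $\mathfrak{h}$-module with weights $\mu(\nu,\eta)$, where $\nu$ runs over $\mathrm{supp}(M)$. Since $U(\mathfrak{n}^-)$ is spanned by $\mathrm{ad}\,\mathfrak{h}$-eigenvectors, the induced module is a weight module. Its weight spaces are finite dimensional, although this is not needed.

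\emph{Step 3: local $\mathfrak{n}_+$-finiteness.} This is the only point requiring an argument, and I would handle it by a short exact sequence argument rather than by computation. By the exactness of $\mathrm{Ind}_\eta$ and induction on the length of $M$, it suffices to show that $\mathrm{Ind}_\eta(L_{\mathfrak{a}}(\nu))$ is locally $\mathfrak{n}_+$-finite. This module is a quotient of $\mathrm{Ind}_\eta(\Delta_{\mathfrak{a}}(\nu))\cong\Delta_{\mathfrak{g}}(\mu(\nu,\eta))$ by Lemma~\ref{lem-s2.2-2}, and a Verma module lies in $\mathcal{O}(\mathfrak{g})$. Local $\mathfrak{n}_+$-finiteness passes to quotients.

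The induction on length also requires extensions. If $0\to M'\to M\to M''\to 0$ is exact, then $\mathrm{Ind}_\eta(M)$ is an extension of two modules in $\mathcal{O}(\mathfrak{g})$. Category $\mathcal{O}(\mathfrak{g})$ is closed under extensions within weight modules: for a vector $x$ of the middle term, the image of $U(\mathfrak{n}_+)x$ in the quotient is finite dimensional, and its kernel part lies in $U(\mathfrak{n}_+)$ applied to finitely many elements of the submodule. Hence $U(\mathfrak{n}_+)x$ is finite dimensional.

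Alternatively, one can avoid the induction entirely. The $\mathfrak{h}$-weights of $\mathrm{Ind}_\eta(M)$ lie in finitely many cones of the form $\lambda-\mathbb{Z}_{\ge 0}\pi$, because $M$ is generated by finitely many weight vectors and $\mathfrak{n}^-$ only lowers weights. Since $\mathfrak{n}_+$ raises weights, every $U(\mathfrak{n}_+)x$ is finite dimensional. I expect no real obstacle here; the extension-closure remark in Step~3 is the only point that needs care.
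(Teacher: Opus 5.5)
Your proof is correct and follows essentially the paper's route. Verma modules go to Verma modules by Lemma~\ref{lem-s2.2-2}, induced simples are quotients of these, and induced modules are weight modules; exactness of $\mathrm{Ind}_\eta$ together with closure of $\mathcal{O}(\mathfrak{g})$ under extensions inside weight $\mathfrak{g}$-modules then finishes the argument. You additionally spell out finite generation and the extension-closure check, which the paper takes for granted; the latter implicitly uses that the relevant left ideal of $U(\mathfrak{n}_+)$ is finitely generated, which holds since $U(\mathfrak{n}_+)$ is Noetherian.
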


\begin{proof}
In Lemma~\ref{lem-s2.2-2}, we already established that 
$\mathrm{Ind}_{\eta}$ sends Verma modules to Verma modules.
From exactness, it follows that simple highest weight 
$\mathfrak{a}$-modules are sent by $\mathrm{Ind}_{\eta}$
to $\mathcal{O}(\mathfrak{g})$. Also, from the definition of
$\mathrm{Ind}_{\eta}$, we see that it sends 
weight $\mathfrak{a}$-modules to weight 
$\mathfrak{g}$-modules. As $\mathcal{O}(\mathfrak{g})$ 
is closed under (first) extensions in the category of 
weight $\mathfrak{g}$-modules, the claim follows
from the exactness of $\mathrm{Ind}_{\eta}$.
\end{proof}

If $\nu\in(\mathfrak{h}_\mathfrak{a})^*$ is regular, integral
and dominant,
then the $\mathfrak{g}$-modules
$\mathrm{Ind}_\eta(L_\mathfrak{a}(\nu))$
are known as {\em parabolic Verma modules}
or {\em generalized Verma modules}. They are exactly the 
standard objects in the parabolic subcategory $\mathcal{O}^\mathfrak{p}$
of $\mathcal{O}$, see \cite{RC}. These modules were intensively studied,
see \cite{Le77,Bo85,CC87,CIS,Mat} and references therein.
More generally, for an arbitrary simple
$\mathfrak{a}$-module $L$, the $\mathfrak{g}$-module 
$\mathrm{Ind}_\eta(L_\mathfrak{a})$ is referred to as a 
{\em generalized Verma module}. Such modules were also intensively studied,
see, e.g., \cite{Fu,Ma95,Ma98,MO,KhM99,Ma00,BFL,MS} and references therein.

\subsection{Restriction}\label{s2.3}

Now let us assume that we are given $M\in\mathcal{O}(\mathfrak{g})$
and $\eta\in(\mathfrak{h}^\perp)^*$. Then the subspace
\begin{displaymath}
\mathrm{Res}_\eta(M):=\{m\in M\,: h\cdot m=\eta(h)m\text{ for all }
h\in \mathfrak{h}^\perp\} 
\end{displaymath}
is an $\mathfrak{a}$-submodule of $M$, since $\mathfrak{a}$
commutes with $\mathfrak{h}^\perp$.

\begin{lemma}\label{lem-s2.3-1}
We have $\mathrm{Res}_\eta(M)\in \mathcal{O}(\mathfrak{a})$. 
\end{lemma}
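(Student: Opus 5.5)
Since $\mathfrak{h}^\perp$ acts diagonalizably on $M$ and commutes with $\mathfrak{a}$, the plan is to check the three defining conditions of $\mathcal{O}(\mathfrak{a})$ for $\mathrm{Res}_\eta(M)$ directly: it must be a weight module, locally $U(\mathfrak{a}\cap\mathfrak{n}_+)$-finite, and finitely generated. Here we use the triangular decomposition of $\mathfrak{a}$ induced from that of $\mathfrak{g}$.

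First, the weight property and local finiteness. As $M$ is a weight $\mathfrak{g}$-module, we have $\mathrm{Res}_\eta(M)=\bigoplus_{\lambda\in\mathrm{supp}(M),\,\lambda|_{\mathfrak{h}^\perp}=\eta}M_\lambda$. Hence $\mathrm{Res}_\eta(M)$ is a sum of $\mathfrak{h}$-weight spaces, and in particular it is a weight module over $\mathfrak{h}_\mathfrak{a}$. Local finiteness over $U(\mathfrak{a}\cap\mathfrak{n}_+)\subset U(\mathfrak{n}_+)$ is inherited directly from $M$, because $\mathrm{Res}_\eta(M)$ is an $\mathfrak{a}$-submodule and the cyclic $U(\mathfrak{a}\cap\mathfrak{n}_+)$-submodule generated by any element lies inside the finite dimensional $U(\mathfrak{n}_+)$-submodule generated by that element.

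Second, finite generation, which is the main obstacle because a submodule of a finitely generated module need not obviously be finitely generated over the smaller algebra $\mathfrak{a}$. The approach is to first reduce to the case where $M$ has finite length. Every module in $\mathcal{O}$ has finite length, and $\mathrm{Res}_\eta$ is exact, being the projection onto a direct summand of $\mathfrak{h}^\perp$-eigenspaces. Category $\mathcal{O}(\mathfrak{a})$ is closed under extensions inside weight $\mathfrak{a}$-modules that are locally $\mathfrak{a}\cap\mathfrak{n}_+$-finite, since an extension of finitely generated modules is finitely generated. So it suffices to treat simple $M=L(\lambda)$, and in fact it suffices to show that $\mathrm{Res}_\eta$ of a Verma module $\Delta_\mathfrak{g}(\lambda)$ is finitely generated, since $L(\lambda)$ is a quotient of it. Note that $\mathcal{O}(\mathfrak{a})$, defined with $\mathfrak{h}_\mathfrak{a}$-weights, is Noetherian, so quotients and submodules of finitely generated modules are fine.

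By PBW, $\Delta_\mathfrak{g}(\lambda)\cong U(\mathfrak{n}^-)\otimes U(\mathfrak{a}\cap\mathfrak{n}_-)\otimes v_\lambda$ as vector spaces, where $\mathfrak{n}^-$ is the span of the negative root spaces not in $\mathfrak{a}$. Equivalently, $\Delta_\mathfrak{g}(\lambda)\cong U(\mathfrak{n}^-)\otimes \Delta_\mathfrak{a}(\lambda|_{\mathfrak{h}_\mathfrak{a}})$ via Lemma~\ref{lem-s2.2-2}. The $\mathfrak{h}^\perp$-weight of a PBW monomial in $U(\mathfrak{n}^-)$ determines the sum of its roots modulo the span of $J$. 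Restricting to $\mathfrak{h}^\perp$-weight $\eta$ therefore fixes the total non-$J$ part of the roots used, and since all these roots are negative combinations of simple roots with a nonzero non-$J$ coefficient, only finitely many monomials in $U(\mathfrak{n}^-)$ qualify. Consequently $\mathrm{Res}_\eta(\Delta_\mathfrak{g}(\lambda))=\sum_u u\,U(\mathfrak{a})v_\lambda$ over finitely many $\mathfrak{h}$-weight vectors $u$. To make this an $\mathfrak{a}$-module statement, we use that $\mathrm{ad}\,\mathfrak{a}$ preserves the finite dimensional graded pieces of $U(\mathfrak{n}^-)$ with fixed $\mathfrak{h}^\perp$-weight, so that $U(\mathfrak{a})\,(U(\mathfrak{n}^-)_{\eta'}\otimes v_\lambda)$ equals $U(\mathfrak{n}^-)_{\eta'}\otimes U(\mathfrak{a})v_\lambda$. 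The result is generated by the finite dimensional space $U(\mathfrak{n}^-)_{\eta-\lambda|_{\mathfrak{h}^\perp}}v_\lambda$, which completes the finite generation step.
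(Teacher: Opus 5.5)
Your proposal is correct and follows the paper's proof. Both arguments check the weight and local finiteness conditions directly, reduce finite generation via exactness of $\mathrm{Res}_\eta$ and finite length to Verma modules, and then use PBW together with the observation that only finitely many monomials in the negative root vectors outside $\mathfrak{a}$ have the required $\mathfrak{h}^\perp$-weight. The one difference is the order of the PBW factors. You put the nilradical part on the left, which is why you need the extra $\mathrm{ad}\,\mathfrak{a}$-stability step; in return it essentially exhibits $\mathrm{Res}_\eta(\Delta_\mathfrak{g}(\lambda))$ as a finite-dimensional $\mathfrak{a}$-module tensored with an $\mathfrak{a}$-Verma module. The paper puts $U(\mathfrak{n}_-)\cap U(\mathfrak{a})$ on the left and gets freeness of finite rank over it directly.
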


\begin{proof}
That  $\mathrm{Res}_\eta$ sends weight $\mathfrak{g}$-modules 
to weight $\mathfrak{a}$-modules is clear from the definitions.
Also, that $\mathrm{Res}_\eta$ sends $U(\mathfrak{n}_+)$-
finite $\mathfrak{g}$-modules to $\mathfrak{a}$-modules,
the action of $U(\mathfrak{n}_+)\cap U(\mathfrak{a})$ on which 
is locally finite, is also clear. 

It remains to check that $\mathrm{Res}_\eta(M)$ is finitely
generated, for any $M\in \mathcal{O}(\mathfrak{g})$. Since
all modules in $\mathcal{O}(\mathfrak{g})$ are weight modules,
the assignment $M\mapsto \mathrm{Res}_\eta(M)$ is, obviously,
exact. As any object in $\mathcal{O}(\mathfrak{g})$ has finite
length, it follows that it is enough to prove out claim in the case
$M=L(\lambda)$. As $\Delta(\lambda)$ surjects onto $L(\lambda)$,
it is enough to prove out claim in the case $M=\Delta(\lambda)$.

By construction, the module $\Delta(\lambda)$ is free over
$U(\mathfrak{n}_-)$ of rank $1$. We claim that 
$\mathrm{Res}_\eta(\Delta(\lambda))$ is free over
$U(\mathfrak{n}_-)\cap U(\mathfrak{a})$ of finite rank.

Let us note that the module $\mathrm{Res}_\eta(\Delta(\lambda))$ 
is zero for almost all $\eta$, in which case we have nothing to prove.
Therefore let us assume that $\eta$ is such that 
$\mathrm{Res}_\eta(\Delta(\lambda))\neq 0$.
%
Let $\mathbf{p}:\mathfrak{h}^*\to (\mathfrak{h}^\perp)^*$
be the projection map with respect to the decomposition
\begin{displaymath}
\mathfrak{h}^*\cong \mathfrak{h}_\mathfrak{a}^*\oplus 
(\mathfrak{h}^\perp)^*.
\end{displaymath}
Then
\begin{displaymath}
\mathrm{Res}_\eta(\Delta(\lambda))=
\bigoplus_{\mu\in\mathfrak{h}^*\,:\,\mathbf{p}(\mu)=\eta}\Delta(\lambda)_\mu.
\end{displaymath}
Given a negative root $\alpha$ of $\mathfrak{g}$, we note that 
$\alpha$ is a root of $\mathfrak{a}$ if and only if 
$\mathbf{p}(\alpha)=0$. Let $\alpha_1,\dots,\alpha_k$ be a 
complete and irredundant
list of all negative roots of $\mathfrak{g}$ that are not 
roots of $\mathfrak{a}$. Then all $\mathbf{p}(\alpha_i)$ are non-zero,
moreover, the additive semigroup generated by all these elements in 
$(\mathfrak{h}^\perp)^*$ is free over the basis given by
those $\mathbf{p}(\alpha_i)$'s, for which $-\alpha_i$ is a simple root.
Therefore the equation
\begin{equation}\label{eq-gvm}
x_1\mathbf{p}(\alpha_1)+x_2\mathbf{p}(\alpha_2)+\dots + 
x_k\mathbf{p}(\alpha_k)  =\eta-\mathbf{p}(\lambda)
\end{equation}
has only finitely many non-negative integer solutions
$(x_1,x_2,\dots,x_k)\in\mathbb{Z}_{\geq 0}^k$.

Let $v$ be the canonical generator of $\Delta(\lambda)$. Then 
$v$ is a basis in $\Delta(\lambda)_\lambda$. For $\nu\in\mathfrak{h}^*$,
we have $\Delta(\lambda)_\nu\neq 0$ if and only if $\nu-\lambda$
has the form $\displaystyle\sum_{\alpha\in\mathbf{R}_-}m_\alpha\alpha$,
where all $m_\alpha\in\mathbb{Z}_{\geq 0}$. We can split this sum into
two parts: in the first we sum over all $\alpha\in\mathbf{R}_-$
which are roots of $\mathfrak{a}$ and in the second one, we sum
over all other roots, that is, over $\alpha_1,\alpha_2,\dots,\alpha_k$.
Consider the set $Q$ consisting of all weights $\nu$ such that
$\Delta(\lambda)_\nu\neq 0$ and $\mathbf{p}(\nu)=\eta$.
Then, for  $\zeta\in \mathfrak{h}^*_\mathfrak{a}$,  the inequality
$\mathrm{Res}_\eta(\Delta(\lambda))_\zeta\neq 0$
implies $\zeta\in Q$.

Given a solution $(x_1,x_2,\dots,x_k)\in\mathbb{Z}_{\geq 0}^k$
to Equation~\eqref{eq-gvm}, consider the element
\begin{displaymath}
X_{\alpha_1}^{x_1}X_{\alpha_2}^{x_2}\dots X_{\alpha_k}^{x_k}v
\in\mathrm{Res}_\eta(\Delta(\lambda)).
\end{displaymath}
As we have only finitely many solutions to Equation~\eqref{eq-gvm},
this gives a finite collection of elements. By the PBW Theorem,
this finite collection of elements is a basis of 
$\mathrm{Res}_\eta(\Delta(\lambda))$ 
as a $U(\mathfrak{n}_-)\cap U(\mathfrak{a})$-module.
The claim follows.
\end{proof}

From Lemma~\ref{lem-s2.3-1}, it follows that $\mathrm{Res}_\eta$
defines a functor from $\mathcal{O}(\mathfrak{g})$
to $\mathcal{O}(\mathfrak{a})$, which acts on morphisms by restriction.
Since all modules in $\mathcal{O}(\mathfrak{g})$ are weight modules,
the functor $\mathrm{Res}_\eta$ is exact.

\subsection{Right adjoint of parabolic induction}\label{s2.4}

Given $M\in\mathcal{O}(\mathfrak{g})$
and $\eta\in(\mathfrak{h}^\perp)^*$,
define $\overline{\mathrm{Res}_\eta}(M)$
as the set of all elements $v\in \mathrm{Res}_\eta(M)$
such that $\mathfrak{n}v=0$.

\begin{lemma}\label{lem-s2.4-1}
The set $\overline{\mathrm{Res}_\eta}(M)$ is, in fact,
an $\mathfrak{a}$-submodule of $\mathrm{Res}_\eta(M)$.
\end{lemma}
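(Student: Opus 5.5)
We need to show that for $v\in\overline{\mathrm{Res}_\eta}(M)$ and $a\in\mathfrak{a}$, the vector $av$ is again in $\mathrm{Res}_\eta(M)$ and is annihilated by $\mathfrak{n}$. The first part is already known: $\mathrm{Res}_\eta(M)$ is an $\mathfrak{a}$-submodule of $M$, as noted in Subsection~\ref{s2.3}, because $\mathfrak{a}$ commutes with $\mathfrak{h}^\perp$. So the only thing to prove is that $\mathfrak{n}$ annihilates $av$.

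For this, the key structural input is that $\mathfrak{n}$ is an ideal in $\mathfrak{p}$, and in particular $[\mathfrak{a},\mathfrak{n}]\subset\mathfrak{n}$. This follows from the root-space description recalled in Subsection~\ref{s2.1}. The space $\mathfrak{n}$ is the sum of the $\mathfrak{g}_\beta$ over positive roots $\beta$ not in the span of $J$. The algebra $\mathfrak{a}$ is spanned by $\mathfrak{h}_\mathfrak{a}$ and the $\mathfrak{g}_\gamma$ with $\gamma$ a root in the span of $J$.

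First, $[\mathfrak{h}_\mathfrak{a},\mathfrak{g}_\beta]\subset\mathfrak{g}_\beta$, which handles the Cartan part of $\mathfrak{a}$. Next, for $\gamma$ in the span of $J$ and $\beta\in\mathfrak{n}$'s roots, suppose $\gamma+\beta$ is a root. Since $\beta$ is positive with some coefficient at a simple root outside $J$ strictly positive, and $\gamma$ has zero coefficients outside $J$, the root $\gamma+\beta$ still has that positive coefficient. Hence $\gamma+\beta$ is positive and not in the span of $J$, so $[\mathfrak{g}_\gamma,\mathfrak{g}_\beta]\subset\mathfrak{g}_{\gamma+\beta}\subset\mathfrak{n}$.

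With this in hand, the computation is immediate. For $x\in\mathfrak{n}$, $a\in\mathfrak{a}$ and $v\in\overline{\mathrm{Res}_\eta}(M)$, we have
\begin{displaymath}
x(av)=a(xv)+[x,a]v=0+[x,a]v=0,
\end{displaymath}
since $xv=0$ and $[x,a]\in\mathfrak{n}$ also kills $v$. Therefore $av\in\overline{\mathrm{Res}_\eta}(M)$.

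The only step with any content is the ideal property $[\mathfrak{a},\mathfrak{n}]\subset\mathfrak{n}$, that is, the root-coefficient argument above; everything else is formal.
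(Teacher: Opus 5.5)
Your proof is correct and is the paper's argument: the identity $x(av)=a(xv)+[x,a]v$ together with $[\mathfrak{n},\mathfrak{a}]\subset\mathfrak{n}$, which holds because $\mathfrak{n}$ is an ideal of $\mathfrak{p}$. The only difference is that you verify the ideal property through the root-coefficient argument, whereas the paper takes it for granted.
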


\begin{proof}
Let $v\in \overline{\mathrm{Res}_\eta}(M)$,
$a\in\mathfrak{a}$ and $n\in \mathfrak{n}$. Then
$na\cdot v=an\cdot v+ [n,a]\cdot v$. Here $an\cdot v=0$
since $n\cdot v=0$ due to $v\in \overline{\mathrm{Res}_\eta}(M)$.
Also, $[n,a]\in \mathfrak{n}$ as the latter is an ideal of
$\mathfrak{p}$ and hence $[n,a]\cdot v=0$ again due to 
$v\in \overline{\mathrm{Res}_\eta}(M)$. Therefore
$na\cdot v=0$, which implies $a\cdot v\in \overline{\mathrm{Res}_\eta}(M)$.
\end{proof}

From Lemmata ~\ref{lem-s2.3-1} and \ref{lem-s2.4-1}, 
it follows that the assignment
$M\mapsto \overline{\mathrm{Res}_\eta}(M)$ is a functor from
$\mathcal{O}(\mathfrak{g})$ to $\mathcal{O}(\mathfrak{a})$.

\begin{proposition}\label{prop-s2.4-2}
The pair $(\mathrm{Ind}_\eta,\overline{\mathrm{Res}_\eta})$ 
is an adjoint pair of functors. 
Furthermore, for $M\in\mathcal{O}(\mathfrak{a})$,
we have $\overline{\mathrm{Res}}_\eta (\mathrm{Ind}_\eta (M))\cong M.$
\end{proposition}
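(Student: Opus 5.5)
The plan is to reduce the adjunction to the usual tensor--Hom adjunction for induction from $U(\mathfrak{p})$ to $U(\mathfrak{g})$. Write $\mathrm{Inf}_\eta$ for the functor which turns an $\mathfrak{a}$-module $M$ into a $\mathfrak{p}$-module by letting $\mathfrak{n}$ act by zero and $\mathfrak{h}^\perp$ act via $\eta$. Then $\mathrm{Ind}_\eta=U(\mathfrak{g})\otimes_{U(\mathfrak{p})}\mathrm{Inf}_\eta(-)$. For $M\in\mathcal{O}(\mathfrak{a})$ and $N\in\mathcal{O}(\mathfrak{g})$, the standard adjunction gives
\begin{displaymath}
\mathrm{Hom}_\mathfrak{g}(\mathrm{Ind}_\eta(M),N)\cong \mathrm{Hom}_\mathfrak{p}(\mathrm{Inf}_\eta(M),N),
\end{displaymath}
naturally in both arguments. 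It therefore suffices to show that
\begin{displaymath}
\mathrm{Hom}_\mathfrak{p}(\mathrm{Inf}_\eta(M),N)\cong\mathrm{Hom}_\mathfrak{a}(M,\overline{\mathrm{Res}_\eta}(N)).
\end{displaymath}
Let $f$ be a $\mathfrak{p}$-homomorphism. Its image is killed by $\mathfrak{n}$ and lies in the $\eta$-eigenspace of $\mathfrak{h}^\perp$, so it lands in $\overline{\mathrm{Res}_\eta}(N)$, and $f$ is then an $\mathfrak{a}$-map. Conversely, an $\mathfrak{a}$-map $M\to \overline{\mathrm{Res}_\eta}(N)$ automatically commutes with the actions of $\mathfrak{n}$ (both are zero) and of $\mathfrak{h}^\perp$ (both are $\eta$). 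Since $\mathfrak{p}=\mathfrak{a}\oplus\mathfrak{h}^\perp\oplus\mathfrak{n}$, it is then a $\mathfrak{p}$-map. These two assignments are mutually inverse and natural, which gives the adjunction. By Corollary~\ref{cor-s2.2-3} and the discussion after Lemma~\ref{lem-s2.4-1}, both functors preserve the relevant categories.

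For the second claim, I would use the PBW decomposition $U(\mathfrak{g})\cong U(\mathfrak{n}_-^{\mathfrak{n}})\otimes U(\mathfrak{p})$. Here $\mathfrak{n}_-^{\mathfrak{n}}$ is the span of the $X_{\alpha_i}$, where $\alpha_1,\dots,\alpha_k$ are the negative roots that are not roots of $\mathfrak{a}$, as in the proof of Lemma~\ref{lem-s2.3-1}. This gives a vector space decomposition
\begin{displaymath}
\mathrm{Ind}_\eta(M)\cong \bigoplus_{(x_1,\dots,x_k)} X_{\alpha_1}^{x_1}\cdots X_{\alpha_k}^{x_k}\otimes M.
\end{displaymath}
The summand indexed by $(x_1,\dots,x_k)$ has $\mathfrak{h}^\perp$-weight $\eta+\sum x_i\mathbf{p}(\alpha_i)$. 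As observed in the proof of Lemma~\ref{lem-s2.3-1}, the $\mathbf{p}(\alpha_i)$ all lie in a free semigroup and are nonzero. Hence this weight equals $\eta$ only when all $x_i=0$, so $\mathrm{Res}_\eta(\mathrm{Ind}_\eta(M))=1\otimes M\cong M$ as $\mathfrak{a}$-modules. Moreover $\mathfrak{n}$ kills $1\otimes M$ by construction, so $\overline{\mathrm{Res}_\eta}(\mathrm{Ind}_\eta(M))=1\otimes M\cong M$. This isomorphism is precisely the unit $M\to\overline{\mathrm{Res}_\eta}\mathrm{Ind}_\eta(M)$, $m\mapsto 1\otimes m$, which is natural in $M$.

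The main point needing care is the weight argument that no $x\neq 0$ contributes $\mathfrak{h}^\perp$-weight $\eta$. This requires that a nonnegative nonzero combination of the $\mathbf{p}(\alpha_i)$ is never $0$. That follows because each $\alpha_i$, written in simple roots, has a strictly negative coefficient at some simple root outside $J$, and $\mathbf{p}$ is injective on the span of the simple roots outside $J$ (modulo the span of $J$). Everything else is formal.
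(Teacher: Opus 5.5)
Your proof is correct and takes the same approach as the paper: the paper's proof just cites the usual induction--restriction (tensor--Hom) adjunction and the PBW theorem. You have supplied the details it leaves implicit, in particular the $\mathfrak{h}^\perp$-weight argument showing that only the summand $1\otimes M$ of $\mathrm{Ind}_\eta(M)$ has $\mathfrak{h}^\perp$-weight $\eta$.
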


\begin{proof}
The first statement is the usual adjunction between induction and restriction.
The second statement follows from the PBW theorem. 
\end{proof}

As a direct consequence of Proposition~\ref{prop-s2.4-2}, we record that
the functor $\overline{\mathrm{Res}_\eta}$ is left exact.

\subsection{Induced structural modules}\label{s2.5}

Consider now the principal block 
$\mathcal{O}(\mathfrak{g})_0$ in $\mathcal{O}(\mathfrak{g})$.
For $w\in {}_JW$, set $\lambda_w:=w\cdot 0$ and
$\eta_w:=\mathbf{p}(\lambda_w)$. Let $\mu_w\in(\mathfrak{h}_\mathfrak{a})^*$
denote the restriction of $\lambda_w$ to $\mathfrak{h}_\mathfrak{a}$.
Then each $\mu_w$ is a regular, integral and dominant weight for $\mathfrak{a}$.

For a fixed $w\in {}_JW$, the exact parabolic induction
\begin{displaymath}
\mathrm{Ind}_{\eta_w}:\mathcal{O}(\mathfrak{a})_{\mu_w}
\to \mathcal{O}(\mathfrak{g})_{0}
\end{displaymath}
induces a linear map
\begin{displaymath}
[\mathrm{Ind}_{\eta_w}]:
\mathbf{Gr}({}^\mathbb{Z}\mathcal{O}(\mathfrak{a})_{\mu_w})
\to \mathbf{Gr}({}^\mathbb{Z}\mathcal{O}(\mathfrak{g})_{0}).
\end{displaymath}
Under the identification of $\mathbf{Gr}({}^\mathbb{Z}\mathcal{O}(\mathfrak{a})_{\mu_w})$ and $\mathbf{Gr}({}^\mathbb{Z}\mathcal{O}(\mathfrak{g})_{0})$ with the Hecke algebras for $\mathfrak{a}$
and $\mathfrak{g}$, respectively, as in Subsection~\ref{s1.7},
the map $[\mathrm{Ind}_{\eta_w}]$ is given by right multiplication by
$H_w$ in the following way:

\begin{lemma}\label{lem-pindhecke}
For $M\in {}^\mathbb{Z}\mathcal{O}(\mathfrak{a})_{\mu_w}$, 
we have $[\mathrm{Ind}_{\eta_w}(M)]=[M]H_w$.
\end{lemma}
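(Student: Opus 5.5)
Both sides are additive in $M$, and $[\mathrm{Ind}_{\eta_w}]$ is well defined on Grothendieck groups because $\mathrm{Ind}_{\eta_w}$ is exact. It therefore suffices to check the identity on a basis of $\mathbf{Gr}({}^\mathbb{Z}\mathcal{O}(\mathfrak{a})_{\mu_w})$. I will use the basis of graded Verma modules $\Delta_{\mathfrak{a}}(x\cdot\mu_w)\langle k\rangle$ with $x\in W(J)$ and $k\in\mathbb{Z}$. For this to make sense, $\mathrm{Ind}_{\eta_w}$ has to be understood as a graded functor commuting with $\langle 1\rangle$. I will take the graded lift in which $\mathrm{Ind}_{\eta_w}$ sends the graded Verma module $\Delta_{\mathfrak{a}}(x\cdot\mu_w)$, generated in degree $0$, to the graded Verma module generated in degree $0$. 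This is forced, up to a global shift, by exactness together with the uniqueness of graded lifts of indecomposables. With this choice the shift compatibility holds automatically, and the claim reduces to
\[
[\mathrm{Ind}_{\eta_w}(\Delta_{\mathfrak{a}}(x\cdot\mu_w))]=H_xH_w \quad\text{for all } x\in W(J).
\]
Here the left factor $[\Delta_{\mathfrak{a}}(x\cdot\mu_w)]$ is identified with $H_x$ in the Hecke algebra of $W(J)$, which sits inside $\mathbf{H}$.

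The first step is to apply Lemma~\ref{lem-s2.2-2}. It gives $\mathrm{Ind}_{\eta_w}(\Delta_{\mathfrak{a}}(x\cdot\mu_w))\cong\Delta_{\mathfrak{g}}(\mu(x\cdot\mu_w,\eta_w))$, so the task becomes identifying the $\mathfrak{g}$-weight $\mu(x\cdot\mu_w,\eta_w)$ as $xw\cdot 0$.

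For this I compute $xw\cdot 0 = x(w(\rho))-\rho$ and compare it with $w\cdot 0=w(\rho)-\rho$.

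\begin{itemize}
\item Since $x\in W(J)$ fixes $\mathfrak{h}^\perp$ pointwise, $x$ changes a weight only by an element of the span of $J$. Hence $\mathbf{p}(xw\cdot 0)=\mathbf{p}(w\cdot 0)=\eta_w$.
\item On $\mathfrak{h}_\mathfrak{a}$, the restriction of $\rho$ equals $\rho_\mathfrak{a}$, because $\langle\rho,\alpha^\vee\rangle=1$ for every simple root $\alpha$ in $J$. Therefore the restriction of $x(\lambda+\rho)-\rho$ to $\mathfrak{h}_\mathfrak{a}$ is $x(\lambda|_{\mathfrak{h}_\mathfrak{a}}+\rho_\mathfrak{a})-\rho_\mathfrak{a}$, which is the $\mathfrak{a}$-dot action.
\item Applying this with $\lambda=w\cdot 0$ gives $(xw\cdot 0)|_{\mathfrak{h}_\mathfrak{a}}=x\cdot\mu_w$.
\end{itemize}

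Together these show $\mu(x\cdot\mu_w,\eta_w)=xw\cdot 0$, so the induced module is $\Delta_{xw}$.

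It remains to check the Hecke algebra side. Since $w\in{}_JW$ is a shortest representative of its coset $W(J)w$, we have $\ell(xw)=\ell(x)+\ell(w)$ for all $x\in W(J)$. Consequently $H_xH_w=H_{xw}$, which is exactly $[\Delta_{xw}]$ under the identification of Subsection~\ref{s1.7}.

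The main obstacle I expect is the bookkeeping of the grading: making precise that $\mathrm{Ind}_{\eta_w}$ lifts to the graded categories, with the normalization fixed above, in a way that commutes with $\langle 1\rangle$. The weight computation and the length additivity are routine by comparison. To handle the grading, I will argue via Koszulity, or more directly by observing that induction preserves the Verma flags and that the multiplicities match degree by degree. If this turns out to be delicate, I will state explicitly that this graded normalization is the one being used.
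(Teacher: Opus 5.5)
Your proposal is correct and is the paper's argument: reduce by additivity to (graded) Verma modules, apply Lemma~\ref{lem-s2.2-2}, and use $H_xH_w=H_{xw}$, which holds because $\ell(xw)=\ell(x)+\ell(w)$ for $w\in{}_JW$. Your explicit check that $\mu(x\cdot\mu_w,\eta_w)=xw\cdot 0$ spells out what the paper leaves to ``the definitions''. On the grading, which the paper also leaves implicit, uniqueness of graded lifts only fixes each shift separately. To see that the shifts agree for all $x\in W(J)$, compare the degrees of the nonzero maps $\Delta_{\mathfrak{a}}(x\cdot\mu_w)\to\Delta_{\mathfrak{a}}(\mu_w)$ and $\Delta_{xw}\to\Delta_w$; both equal $\ell(x)$.
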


\begin{proof}
By linearity, it is enough to prove this for any collection of modules
whose images in $\mathbf{Gr}({}^\mathbb{Z}\mathcal{O}(\mathfrak{a})_{\mu_w})$
form there a basis. A wise choice of such collection would be standard
modules. For such modules, the claim follows directly from the definitions
and Lemma~\ref{lem-s2.2-2},
combined with the identity $H_uH_w=H_{uw}$, for $u\in W(J)$, since
$\ell(uw)=\ell(u)+\ell(w)$ under our assumption that $w\in{}_JW$.
\end{proof}

Now, for each $u\in W(J)$, we can consider the corresponding induced
structural modules
\begin{displaymath}
\mathrm{Ind}_{\eta_w}(L_\mathfrak{a}(u\cdot \mu_w)),\quad
\mathrm{Ind}_{\eta_w}(\Delta_\mathfrak{a}(u\cdot \mu_w)),\quad
\mathrm{Ind}_{\eta_w}(P_\mathfrak{a}(u\cdot \mu_w)).
\end{displaymath}
As we already established in Lemma~\ref{lem-s2.2-2}, we have
\begin{displaymath}
\Delta_\mathfrak{g}(uw\cdot 0)\cong
\mathrm{Ind}_{\eta_w}(\Delta_\mathfrak{a}(u\cdot \mu_w)).
\end{displaymath}
The induced module $\mathrm{Ind}_{\eta_w}(L_\mathfrak{a}(u\cdot \mu_w))$
is an example of a generalized Verma module. The induced module
$\mathrm{Ind}_{\eta_w}(P_\mathfrak{a}(u\cdot \mu_w))$ will play 
important role in the stratified structure on 
$\mathcal{O}_0$ which we will discuss in the next section.

\begin{proposition}\label{prop-KL-indproj}
For $w\in {}_JW$ and $u\in W(J)$ as above,
the isomorphism in Subsection~\ref{s1.7} sends 
$[\mathrm{Ind}_{\eta_w}(P_\mathfrak{a}(u\cdot \mu_w))]$
to $\underline{H}_uH_w$.
\end{proposition}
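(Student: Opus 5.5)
This follows from Lemma~\ref{lem-pindhecke} combined with the classical description of projective modules in $\mathcal{O}(\mathfrak{a})_{\mu_w}$. By Lemma~\ref{lem-pindhecke}, applied to $M=P_\mathfrak{a}(u\cdot\mu_w)$ with its standard graded lift, we have
\begin{displaymath}
[\mathrm{Ind}_{\eta_w}(P_\mathfrak{a}(u\cdot \mu_w))]=[P_\mathfrak{a}(u\cdot \mu_w)]\,H_w .
\end{displaymath}
It therefore suffices to identify $[P_\mathfrak{a}(u\cdot \mu_w)]$ with the Kazhdan--Lusztig basis element $\underline{H}_u$ of the Hecke algebra of $(W(J),J)$. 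The latter is a subalgebra of $\mathbf{H}$, so the product $\underline{H}_uH_w$ makes sense.

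Since $\mu_w$ is regular, integral and dominant for $\mathfrak{a}$, Subsection~\ref{s1.8}, applied to the semisimple Lie algebra $\mathfrak{a}$, gives an equivalence $\mathcal{O}(\mathfrak{a})_{\mu_w}\simeq\mathcal{O}(\mathfrak{a})_0$. This equivalence matches $L_\mathfrak{a}(x\cdot\mu_w)$ with $L_{\mathfrak{a},x}$ for $x\in W(J)$, so it also matches Verma modules and indecomposable projectives with the same index. It lifts to the graded setting, since the graded lifts are unique up to shift and we normalize them by requiring the top (respectively, the head) to sit in degree $0$.

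Under the identification of Subsection~\ref{s1.7} for $\mathfrak{a}$, which sends $[\Delta_\mathfrak{a}(x\cdot\mu_w)]\mapsto H_x$, the class of the indecomposable projective $P_\mathfrak{a}(u\cdot\mu_w)$ goes to $\underline{H}_u$. Here we use that the Weyl group of $\mathfrak{a}$ with respect to the induced triangular decomposition is $W(J)$ with simple reflections $J$, and that the Hecke algebra and Kazhdan--Lusztig basis of $(W(J),J)$ embed into $\mathbf{H}$ compatibly, i.e.\ the Kazhdan--Lusztig polynomials of a parabolic subgroup coincide with those computed in $W$. This compatibility is standard: the bar involution restricts, and the degree condition characterizing $\underline{H}_u$ is intrinsic.

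The main point requiring care is the compatibility of gradings. The identification in Lemma~\ref{lem-pindhecke} requires a graded version of $\mathrm{Ind}_{\eta_w}$, and one must check that it sends the graded Verma module $\Delta_\mathfrak{a}(x\cdot\mu_w)$ to $\Delta_{xw}$ without shift. This holds because induction maps the head generator to the head generator and both lifts are normalized in degree $0$. With this in place, expanding $[P_\mathfrak{a}(u\cdot\mu_w)]=\sum_{x}p_{x,u}[\Delta_\mathfrak{a}(x\cdot\mu_w)]$ via graded BGG reciprocity in $\mathcal{O}(\mathfrak{a})$ and applying Lemma~\ref{lem-pindhecke} termwise gives
\begin{displaymath}
\sum_{x\in W(J)}p_{x,u}H_xH_w=\underline{H}_uH_w,
\end{displaymath}
as claimed.
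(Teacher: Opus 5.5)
Your proposal is correct and follows the same route as the paper: apply Lemma~\ref{lem-pindhecke} to $M=P_\mathfrak{a}(u\cdot\mu_w)$, and use that, under the identification of Subsection~\ref{s1.7} for $\mathfrak{a}$, the class of this projective is $\underline{H}_u$ in the Hecke algebra of $W(J)$, viewed inside $\mathbf{H}$. The paper compresses this into ``follows from the definitions''; the points you spell out are exactly what it leaves implicit, namely the block equivalence, the compatibility of Kazhdan--Lusztig bases under $W(J)\subset W$, and the grading normalization.
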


\begin{proof}
This follows from the definitions combined with
Lemma~\ref{lem-pindhecke}.
\end{proof}

The elements $\underline{H}_uH_w$ as above form the so-called
{\em hybrid KL basis}, introduced in \cite{GH07}.

{\color{red}Add more references preceding GH07.}

\section{Stratification}\label{s3}

\subsection{Stratified algebras}\label{s3.1}

Let $B$ be a finite dimensional associative algebra. We fix
a complete and irredundant list $L_1,L_2,\dots,L_n$ of representatives
of the isomorphism classes of simple $B$-modules. 
For $1\leq i\leq n$, denote by $P_i$ and $I_i$ the indecomposable 
projective cover and injective envelope of $L_i$, respectively.

Let $\preceq$ be a fixed partial pre-order on the indexing set
$\underline{n}:=\{1,2,\dots,n\}$. For $i,j\in \underline{n}$,
we write $i\sim j$ provided that $i\preceq j$ and $j\preceq i$.
For $i\in \underline{n}$
define 
\begin{itemize}
\item the {\em standard module} $\Delta_i$ as the quotient 
of $P_i$ modulo the trace in $P_i$ of all $P_j$ such that $j\not\preceq i$;
\item the {\em proper standard module} $\overline{\Delta}_i$ as the 
quotient of $\Delta_i$ modulo the trace of all $P_j$ such that 
$j\sim i$ in the radical of $\Delta_i$;
\item the {\em costandard module} $\nabla_i$ as the intersection of
the kernels of all homomorphisms from $I_i$ to $I_j$, for 
$j\not\preceq i$;
\item the {\em proper costandard module} $\overline{\nabla}_i$ as the 
intersection of the kernels of all non-injective homomorphisms from
$\nabla_i$ to $I_j$, for  $j\sim i$.
\end{itemize}
The pair $(B,\preceq)$ is called a {\em stratified algebra} 
provided that all projective $B$-modules have filtrations
with standard subquotients, see \cite{CPS}. Equivalently,
$(B,\preceq)$ is stratified if and only if all injective
modules have filtrations with proper costandard subquotients
see \cite[Theorem~1]{Fr}.
We note that there are many variations of stratified algebras,
see, in particular, \cite{Dl,Fr,BS} and references therein.
Stratified algebras are generalizations of quasi-hereditary algebras,
see \cite{CPS0,DR} for the latter.

\subsection{Parabolic stratification on $\mathcal{O}_0$}\label{s3.2}

Consider the category $\mathcal{O}_0$.
Simple modules in $\mathcal{O}_0$ are bijectively indexed by the 
elements of $W$. Let $\preceq$ be the opposite of the Bruhat
order $\leq$ on $W$. Note that this is a genuine partial order.
For this order, we have $\Delta_w=\overline{\Delta}_w$ is exactly
the Verma module. By \cite{BGG}, all projective modules have 
a filtration with Verma subquotients. Consequently, the algebra
$(A,\preceq)$ is stratified and even quasi-hereditary.

Let us now fix a parabolic subalgebra $\mathfrak{p}$ of $\mathfrak{g}$
as in Section~\ref{s2}. We now want to define a partial pre-order
on $W$ related to $\mathfrak{p}$. Given $x,y\in W$, write
$x=ab$ and $y=a'b'$, where $a,a'\in W(J)$ and $b,b'\in {}_JW$.
Set $x{\preceq^\mathfrak{p}}y$ if and only if $b\preceq b'$.
The subword Property for $W$ ensures that ${\preceq^\mathfrak{p}}$ 
is a refinement of $\preceq$ in the sense that 
$\preceq\subset {\preceq^\mathfrak{p}}$.

Let us start with a description of standard modules for this structure.

\begin{proposition}\label{prop-std}
The standard modules with respect to ${\preceq^\mathfrak{p}}$
are exactly the modules of the form 
$\mathrm{Ind}_{\eta_w}(P_\mathfrak{a}(u\cdot \mu_w))$,
see Subsection~\ref{s2.5}.
\end{proposition}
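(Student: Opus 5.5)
To prove Proposition~\ref{prop-std}, I would fix $x=uw$ with $u\in W(J)$ and $w\in {}_JW$, set $Q:=\mathrm{Ind}_{\eta_w}(P_\mathfrak{a}(u\cdot\mu_w))$, and show directly that $Q$ is the quotient of $P_x$ by the trace of all $P_y$ with $y\not\preceq^{\mathfrak{p}} x$. The argument has three steps.

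\emph{Step 1: $Q$ is a quotient of $P_x$.} By Proposition~\ref{prop-s2.4-2},
\begin{displaymath}
\mathrm{Hom}_\mathfrak{g}(Q,M)\cong\mathrm{Hom}_\mathfrak{a}(P_\mathfrak{a}(u\cdot\mu_w),\overline{\mathrm{Res}_{\eta_w}}(M)).
\end{displaymath}
The module $P_\mathfrak{a}(u\cdot\mu_w)$ has simple top $L_\mathfrak{a}(u\cdot \mu_w)$. Induction is right exact, so $Q$ surjects onto $\mathrm{Ind}_{\eta_w}(L_\mathfrak{a}(u\cdot\mu_w))$. This generalized Verma module is a quotient of $\Delta_x$, so it has simple top $L_x$. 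Hence $Q$ has simple top $L_x$, since any quotient of $Q$ is generated by the image of the generating weight space. Therefore $Q$ is a quotient of $P_x$.

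\emph{Step 2: the composition factors of $Q$ lie in the allowed set.} By Lemma~\ref{lem-s2.2-2} and exactness of induction, $Q$ has a filtration by $\mathrm{Ind}_{\eta_w}(\Delta_\mathfrak{a}(u'\cdot\mu_w))\cong\Delta_{u'w}$ for $u'\in W(J)$. Every composition factor $L_y$ of $\Delta_{u'w}$ satisfies $y\ge u'w$ in the Bruhat order. Write $y=ab$ with $a\in W(J)$ and $b\in{}_JW$. I expect the projection $W\to{}_JW$, $y\mapsto b$, to be order preserving, so $b\ge w$, that is, $y\preceq^\mathfrak{p}x$. Consequently, the kernel $K$ of $P_x\tto Q$ contains the trace of every $P_y$ with $y\not\preceq^\mathfrak{p} x$: the image of $P_y$ in $Q$ must vanish, because $[Q:L_y]=0$.

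\emph{Step 3: $K$ is generated by the forbidden $P_y$.} This is the main obstacle. Let $D$ be the standard module, that is, $P_x$ modulo that trace; by Step 2, $D$ surjects onto $Q$. I would argue that the kernel of $D\tto Q$ is zero by showing $D$ is a quotient of $Q$. Then $D\cong Q$ follows by comparing dimensions, since the two maps $D\tto Q$ and $Q\tto D$ are surjections between finite-dimensional modules.

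To construct $Q\tto D$, consider $\overline{\mathrm{Res}_{\eta_w}}(D)$. Every composition factor of $D$ has index $y=ab$ with $b\ge w$. Vectors of $D$ with $\mathfrak{h}^\perp$-weight $\eta_w$ come from $L_y$ with $b=w$, or with $b>w$ but appropriate weights. Since $\eta_w$ is maximal among the $\mathfrak{h}^\perp$-weights that can occur (here one uses $\mathbf{p}(\lambda_y)$ and the ordering on cosets), $\mathfrak{n}$ must act by zero on $\mathrm{Res}_{\eta_w}(D)$. So $\overline{\mathrm{Res}_{\eta_w}}(D)=\mathrm{Res}_{\eta_w}(D)$, which is a module in $\mathcal{O}(\mathfrak{a})_{\mu_w}$ with top containing $L_\mathfrak{a}(u\cdot\mu_w)$. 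The adjunction then gives a map $Q\to D$ that is nonzero on the top, hence surjective because $D$ has simple top $L_x$.

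The delicate point is the weight maximality argument. If it fails, the fallback is a Grothendieck group count: compare $[D]$ with $\underline{H}_uH_w$ from Proposition~\ref{prop-KL-indproj}, using BGG reciprocity for $\mathcal{O}(\mathfrak{a})$ to compute the multiplicities of $\Delta_{u'w}$ in $D$.
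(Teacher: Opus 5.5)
The gap is in Step~1. Your reason for $Q:=\mathrm{Ind}_{\eta_w}(P_\mathfrak{a}(u\cdot\mu_w))$ having simple top does not prove it. The surjection $Q\tto\mathrm{Ind}_{\eta_w}(L_\mathfrak{a}(u\cdot\mu_w))$ only shows that $L_x$ occurs in the top of $Q$. Generation by a weight space gives nothing more: $Q_{x\cdot 0}\cong P_\mathfrak{a}(u\cdot\mu_w)_{u\cdot\mu_w}$ can have dimension bigger than one (already for $\mathfrak{a}=\mathfrak{sl}_2$, $u=s$). Moreover, $Q$ has weights strictly above $x\cdot 0$, coming from the subquotients $\Delta_{rw}$ with $r<u$ in its Verma flag. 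So a priori $Q$ could have other simple quotients, or have $L_x$ with multiplicity bigger than one in its top. Without a simple top you get no surjection $P_x\tto Q$, hence no $D\tto Q$, and the argument does not close.

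The missing ingredient is the computation of $\overline{\mathrm{Res}_{\eta_w}}(L_z)$; this is what the first half of the paper's proof does by a case analysis. A short way to supply it runs as follows.
\begin{itemize}
\item Let $N\neq 0$ be an $\mathfrak{a}$-submodule of $\overline{\mathrm{Res}_{\eta_w}}(L_z)$. By PBW, $L_z=U(\mathfrak{g})N=U(\mathfrak{n}_-^{\mathfrak{p}})N$.
\item Non-constant monomials in $U(\mathfrak{n}_-^{\mathfrak{p}})$ strictly lower the $\mathfrak{h}^\perp$-weight, so $\mathrm{Res}_{\eta_w}(L_z)=N$. Hence $\overline{\mathrm{Res}_{\eta_w}}(L_z)$ is zero or simple.
\item The adjunction you wrote down (but never used) now shows that $\mathrm{Hom}_\mathfrak{g}(Q,L_z)\neq 0$ forces $\overline{\mathrm{Res}_{\eta_w}}(L_z)\cong L_\mathfrak{a}(u\cdot\mu_w)$, and that this Hom space is one-dimensional.
\item It also gives a nonzero map $\mathrm{Ind}_{\eta_w}(L_\mathfrak{a}(u\cdot\mu_w))\to L_z$, whence $z=x$.
\end{itemize}

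With Step~1 repaired, the rest is correct and takes a different route from the paper at the key point. The paper identifies the kernel of $P_{uw}\tto Q$ with the forbidden trace by comparing Verma flags. It uses BGG reciprocity on both sides together with the PBW identity $[\Delta_{rw}:L_{uw}]=[\Delta_\mathfrak{a}(r\cdot\mu_w):L_\mathfrak{a}(u\cdot\mu_w)]$.

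Your Step~3 instead produces $Q\tto D$ directly, and the maximality argument you were worried about does hold. Every composition factor $L_y$ of $D$ has $b\geq w$, so all weights of $D$ are $\leq w\cdot 0$, and therefore $\mathfrak{n}$ kills $\mathrm{Res}_{\eta_w}(D)$. Lifting $P_\mathfrak{a}(u\cdot\mu_w)\tto L_\mathfrak{a}(u\cdot\mu_w)$ through $\mathrm{Res}_{\eta_w}(D)\tto\mathrm{Res}_{\eta_w}(L_x)\cong L_\mathfrak{a}(u\cdot\mu_w)$ then gives a map $Q\to D$ not landing in $\mathrm{rad}\,D$.

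This avoids BGG reciprocity and any discussion of how the forbidden trace sits inside the Verma flag of $P_{uw}$. The paper's route, in exchange, makes the Verma multiplicities of the standard modules explicit along the way. Two small points: at the end, compare lengths rather than dimensions, since modules in $\mathcal{O}$ are infinite-dimensional. Also, the monotonicity of $y\mapsto b$ that you expect in Step~2 is a standard fact; it is the same fact behind the paper's claim $\preceq\subset\preceq^\mathfrak{p}$.
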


These standard modules give rise to the 
indecomposable projective objects in the 
Serre subquotients of $\mathcal{O}_0$ discussed in
\cite[Subsection~6.2]{CMZ}.

\begin{proof}
Take $w\in{}_JW$ and $u\in W(J)$.
Our first step is to show that 
$\mathrm{Ind}_{\eta_w}(P_\mathfrak{a}(u\cdot \mu_w))$
has simple top $L_{uw}$. To this end, for $z\in W$, by adjunction,
we have
\begin{displaymath}
\mathrm{Hom}_\mathfrak{g}
(\mathrm{Ind}_{\eta_w}(P_\mathfrak{a}(u\cdot \mu_w)),L_z)=
\mathrm{Hom}_\mathfrak{a}
(P_\mathfrak{a}(u\cdot \mu_w),
\overline{\mathrm{Res}_{\eta_w}}(L_z)).
\end{displaymath}
Assume first that $z\prec^\mathfrak{p}w$.
Let $\tilde{z}$ be the unique element in $W(J)\cap {}_JW$.
Then $z\preceq\tilde{z}\prec w$ by definition. In particular,
$\Delta_z\subset \Delta_{\tilde{z}} \subsetneq \Delta_w$ by 
\cite[Theorem~7.7.7]{Di}. As $z\not \in W(J)w$,
when writing $w\cdot 0-z\cdot 0$ as a linear combination of
positive roots with non-negative coefficients, we must have
a non-zero coefficient at at least one of the positive roots
outside the roots of $\mathfrak{a}$. This implies that any 
weight $\lambda$ of $\Delta_z$ has the same property: 
when writing $w\cdot 0-\lambda$ as a linear combination of
positive roots with non-negative coefficients, we must have
a non-zero coefficient at at least one of the positive roots
outside the roots of $\mathfrak{a}$.
Consequently, ${\mathrm{Res}_{\eta_w}}(L_z)=0$
and therefore $\overline{\mathrm{Res}_{\eta_w}}(L_z)=0$ as well.

If $w{\preceq^\mathfrak{p}}z$, then we claim that
$\overline{\mathrm{Res}_{\eta_w}}(L_z)\neq 0$ implies $z\in W(J)w$.
Indeed, $\overline{\mathrm{Res}_{\eta_w}}(L_z)$, being a
non-zero object in $\mathcal{O}(\mathfrak{a})$, has some simple
submodule, say $L_\mathfrak{a}(\lambda)$, where 
$\lambda\in \mathfrak{h}^*_\mathfrak{a}$. The module
$\mathrm{Ind}_{\eta_w}(L_\mathfrak{a}(\lambda))$ is a quotient of
$\mathrm{Ind}_{\eta_w}(\Delta_\mathfrak{a}(\lambda))$ by exactness, 
and the latter module is 
isomorphic to $\Delta(\mu(\lambda,\eta_w))$ by Lemma~\ref{lem-s2.2-2}.
Hence $\mathrm{Ind}_{\eta_w}(L_\mathfrak{a}(\lambda))$ has simple top
$L(\mu(\lambda,\eta_w))$. By adjunction, it also has a non-zero homomorphism
to $L_z$. Now, the fact that $L_z$ is simple  forces 
$z\cdot 0=\mu(\lambda,\eta_w)$ which implies $z\in W(J)w$.

For $z\in W(J)w$ such that $z=rw$ with $r\in W(J)$, we have 
$\overline{\mathrm{Res}_{\eta_w}}(L_z)=L_\mathfrak{a}(r\cdot \mu_w)$
implying 
\begin{displaymath}
\mathrm{Hom}_\mathfrak{g}
(\mathrm{Ind}_{\eta_w}(P_\mathfrak{a}(u\cdot \mu_w)),L_z)=
\mathrm{Hom}_\mathfrak{a}
(P_\mathfrak{a}(u\cdot \mu_w),
\overline{\mathrm{Res}_{\eta_w}}(L_z))=
\begin{cases}
\mathbb{C},& r=u;\\
0,& r\neq u.
\end{cases}
\end{displaymath}
This proves that $\mathrm{Ind}_{\eta_w}(P_\mathfrak{a}(u\cdot \mu_w))$
has simple top $L_{uw}$ and hence is a quotient of $P_{uw}$.

Next, denote by $K$ be the kernel of of the projection map 
$P_{uw}\tto \mathrm{Ind}_{\eta_w}(P_\mathfrak{a}(u\cdot \mu_w))$.
If $z\in W$ is such that $w\prec^\mathfrak{p}z$, then 
$L_z$ does not appear as a composition factor of 
$\mathrm{Ind}_{\eta_w}(P_\mathfrak{a}(u\cdot \mu_w))$ since
$\mathrm{Ind}_{\eta_w}(P_\mathfrak{a}(u\cdot \mu_w))_{z\cdot 0}=0$.
This means that the trace of $P_z$ in $P_{uw}$ belongs to $K$.
In other words, $K$ contains the trace of all $P_z$,
where $w\prec^\mathfrak{p}z$. In fact, we claim that $K$
coincides with this trace.

To prove the claim, let us compare standard filtrations of 
$P_{uw}$ and $\mathrm{Ind}_{\eta_w}(P_\mathfrak{a}(u\cdot \mu_w))$.
All standard modules which appear as subquotients of a standard
filtration of $P_{uw}$ are of the form $\Delta_z$, where $z\leq uw$,
that is $w{\preceq^\mathfrak{p}}z$.  For $w\prec^\mathfrak{p}z$,
all the corresponding standard modules are in $K$ by the previous
paragraph. Therefore it remains to show that, for 
$z\in W(J)w$ with $z=rw$, where $r\in W(J)$, we have
\begin{equation}\label{eq-comparison}
[P_{uw}:\Delta_z]=
[\mathrm{Ind}_{\eta_w}(P_\mathfrak{a}(u\cdot \mu_w)):\Delta_z]. 
\end{equation}
Note that $\mathrm{Ind}_{\eta_w}(P_\mathfrak{a}(u\cdot \mu_w))$
does have a standard filtration by combining exactness of 
induction and Lemma~\ref{lem-s2.2-2} with the fact that 
$P_\mathfrak{a}(u\cdot \mu_w)$ has a standard filtration.

By the BGG reciprocity, $[P_{uw}:\Delta_z]=[\Delta_z:L_{uw}]$.
At the same time, by the BGG reciprocity combined with
exactness and Lemma~\ref{lem-s2.2-2}, we have
\begin{displaymath}
[\mathrm{Ind}_{\eta_w}(P_\mathfrak{a}(u\cdot \mu_w)):\Delta_z]=
[\Delta_\mathfrak{a}(r\cdot \mu_w):L_\mathfrak{a}(u\cdot \mu_w)].
\end{displaymath}
Finally, { we claim that we have
\begin{equation}\label{eq:form751}
[\Delta_z:L_{uw}]= 
[\Delta_\mathfrak{a}(r\cdot {\mu}_w):
L_\mathfrak{a}(u\cdot {\mu}_w)]. 
\end{equation}
Indeed, we have 
$\Delta_z=\mathrm{Ind}_{\eta_w}(\Delta_\mathfrak{a}(r\cdot {\mu}_w))$
by Lemma~\ref{lem-s2.2-2}.
By PBW Theorem, 
the module $\mathrm{Ind}_{\eta_w}(L_\mathfrak{a}(u\cdot {\mu}_w))$
has $L_{uw}$ as simple top and every other simple subquotient $L$ of
$\mathrm{Ind}_{\eta_w}(L_\mathfrak{a}(u\cdot {\mu}_w))$
satisfies $\overline{\mathrm{Res}_{\eta_w}}(L)=0$.
Hence \eqref{eq:form751} follows from exactness of parabolic induction. 
}
This proves \eqref{eq-comparison} and completes
the proof of our proposition.
\end{proof}

\begin{corollary}\label{cor-pr-std}
The proper standard modules with respect to ${\preceq^\mathfrak{p}}$
are exactly the modules of the form 
$\mathrm{Ind}_{\eta_w}(L_\mathfrak{a}(u\cdot \mu_w))$.
\end{corollary}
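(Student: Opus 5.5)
By definition, the proper standard module $\overline{\Delta}_{uw}$ is the quotient of the standard module $\Delta^{\mathfrak{p}}_{uw}$ by the trace, in its radical, of all $P_z$ with $z\sim^{\mathfrak{p}} uw$, that is, $z\in W(J)w$. Proposition~\ref{prop-std} identifies the standard module as $D:=\mathrm{Ind}_{\eta_w}(P_\mathfrak{a}(u\cdot \mu_w))$. So the task is to compute this trace inside $D$ and to show the quotient is $\mathrm{Ind}_{\eta_w}(L_\mathfrak{a}(u\cdot \mu_w))$.

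First I would show that $\mathrm{Ind}_{\eta_w}(L_\mathfrak{a}(u\cdot\mu_w))$ is a quotient of $D$ killing the relevant trace. Apply the exact functor $\mathrm{Ind}_{\eta_w}$ to the projection $P_\mathfrak{a}(u\cdot\mu_w)\tto L_\mathfrak{a}(u\cdot\mu_w)$. The kernel is $\mathrm{Ind}_{\eta_w}(\mathrm{rad}\,P_\mathfrak{a}(u\cdot\mu_w))$. Now take any map $P_z\to \mathrm{rad}\, D$ with $z=rw$, $r\in W(J)$, and compose it with $D\tto \mathrm{Ind}_{\eta_w}(L_\mathfrak{a}(u\cdot\mu_w))$. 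Its image is generated by a vector of weight $z\cdot 0$ lying in the radical. As noted in the proof of Proposition~\ref{prop-std}, by PBW the module $\mathrm{Ind}_{\eta_w}(L_\mathfrak{a}(u\cdot\mu_w))$ has weight space $\eta_w$ (for $\mathfrak{h}^\perp$) equal to $L_\mathfrak{a}(u\cdot\mu_w)$ itself. This is simple, so its only $\mathfrak{a}$-submodule meeting the radical is zero; more precisely, the $\eta_w$-part of the radical of the induced module is zero. Hence the composite vanishes, and $\overline{\Delta}_{uw}$ surjects onto $\mathrm{Ind}_{\eta_w}(L_\mathfrak{a}(u\cdot\mu_w))$.

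Conversely, I would show that the kernel $\mathrm{Ind}_{\eta_w}(\mathrm{rad}\,P_\mathfrak{a}(u\cdot\mu_w))$ lies in the trace of $\bigoplus_{r}P_{rw}$ in $\mathrm{rad}\,D$. The $\mathfrak{a}$-module $\mathrm{rad}\,P_\mathfrak{a}(u\cdot\mu_w)$ is generated by its top, which is a sum of $L_\mathfrak{a}(r\cdot\mu_w)$. So there is an epimorphism $\bigoplus P_\mathfrak{a}(r_i\cdot\mu_w)\tto \mathrm{rad}\,P_\mathfrak{a}(u\cdot\mu_w)$. Inducing gives an epimorphism $\bigoplus \mathrm{Ind}_{\eta_w}(P_\mathfrak{a}(r_i\cdot\mu_w))\tto \mathrm{Ind}_{\eta_w}(\mathrm{rad}\,P_\mathfrak{a}(u\cdot\mu_w))$. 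By Proposition~\ref{prop-std}, each summand is a quotient of $P_{r_iw}$. Composing gives maps $P_{r_iw}\to \mathrm{rad}\, D$ whose images together cover the kernel. Therefore the kernel equals that trace, and $\overline{\Delta}_{uw}\cong \mathrm{Ind}_{\eta_w}(L_\mathfrak{a}(u\cdot\mu_w))$.

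The main obstacle is the first step, namely justifying that no map from $P_{rw}$ into $\mathrm{rad}\,D$ survives in $\mathrm{Ind}_{\eta_w}(L_\mathfrak{a}(u\cdot\mu_w))$. I would handle it by the adjunction of Proposition~\ref{prop-s2.4-2}, or simply by weights. A homomorphism from $P_{rw}$ is determined by a vector of weight $rw\cdot 0$, whose $\mathfrak{h}^\perp$-component is $\eta_w$. In the induced module from a simple module, all vectors of $\mathfrak{h}^\perp$-weight $\eta_w$ lie in $1\otimes L_\mathfrak{a}(u\cdot\mu_w)$, which generates everything and meets the radical trivially.
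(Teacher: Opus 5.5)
Your proof is correct and is essentially the paper's argument. The trace lies in $K=\mathrm{Ind}_{\eta_w}(\mathrm{rad}\,P_\mathfrak{a}(u\cdot\mu_w))$ because the $\eta_w$-part of $\mathrm{Ind}_{\eta_w}(L_\mathfrak{a}(u\cdot\mu_w))$ is the simple module $L_\mathfrak{a}(u\cdot\mu_w)$, which generates everything; conversely, $K$ is generated by images of the $P_{rw}$, and $K\subseteq\mathrm{rad}\,D$ because $D$ has simple top. The only difference is cosmetic: you induce a projective cover of $\mathrm{rad}\,P_\mathfrak{a}(u\cdot\mu_w)$, while the paper induces a composition series and uses that each $\mathrm{Ind}_{\eta_w}(L_\mathfrak{a}(z\cdot\mu_w))$ is a quotient of $P_{zw}$, so your version avoids the implicit lifting along a filtration.
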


\begin{proof}
Recall that the proper standard quotient of 
$\mathrm{Ind}_{\eta_w}(P_\mathfrak{a}(u\cdot \mu_w))$
is given by factoring out the trace $Q$ of all 
$P_a$, where $a\in W(J)w$, in the radical
of $\mathrm{Ind}_{\eta_w}(P_\mathfrak{a}(u\cdot \mu_w))$.

Let $K$ denote the kernel of the natural projection from the 
module
$\mathrm{Ind}_{\eta_w}(P_\mathfrak{a}(u\cdot \mu_w))$
to $\mathrm{Ind}_{\eta_w}(L_\mathfrak{a}(u\cdot \mu_w))$.
Clearly, $Q\subset K$ as 
\begin{displaymath}
\overline{\mathrm{Res}}_{\eta_w}
\big(\mathrm{Ind}_{\eta_w}(L_\mathfrak{a}(u\cdot \mu_w))\big)
\cong L_\mathfrak{a}(u\cdot \mu_w).
\end{displaymath}
So, to complete the proof, we only need to show that 
$K\subset Q$.

{
From the definition, we see that the module $K$ is precisely 
$\mathrm{Ind}_{\eta_w}(\mathrm{rad}(P_{\mathfrak a} (u\cdot\mu_w)))$.
By exactness of induction applied to any composition series of
$\mathrm{rad}(P_{\mathfrak a} (u\cdot\mu_w))$, the module
$K$ has a filtration  whose subquotients have the form
$\mathrm{Ind}_{\eta_w}(L_\mathfrak{a}(z\cdot \mu_w))$,
where $z\in W(J)$. As each such 
$\mathrm{Ind}_{\eta_w}(L_\mathfrak{a}(z\cdot \mu_w))$
is a quotient of $P_{zw}$, we obtain that $K\subset Q$.
}
\end{proof}

By applying the simple preserving duality $\star$, we obtain:

\begin{corollary}\label{cor-costd}
We have:

\begin{enumerate}[$($a$)$]
\item\label{cor-costd.1} The costandard modules with respect 
to ${\preceq^\mathfrak{p}}$ are exactly the modules of the form 
$\mathrm{Ind}_{\eta_w}(P_\mathfrak{a}(u\cdot \mu_w))^\star$.
\item\label{cor-costd.2} The proper costandard modules with respect 
to ${\preceq^\mathfrak{p}}$ are exactly the modules of the form 
$\mathrm{Ind}_{\eta_w}(L_\mathfrak{a}(u\cdot \mu_w))^\star$.
\end{enumerate}
\end{corollary}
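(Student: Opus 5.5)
The plan is to transport Proposition~\ref{prop-std} and Corollary~\ref{cor-pr-std} through the simple preserving duality $\star$ on $\mathcal{O}_0$. The duality $\star$ is an exact contravariant self-equivalence of $\mathcal{O}_0$ with $L_x^\star\cong L_x$ for all $x\in W$. Consequently it interchanges projective covers and injective envelopes, $P_x^\star\cong I_x$, and it reverses the direction of all morphisms.

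First I will check that $\star$ turns the defining construction of standard modules into that of costandard ones. Set $N:=\mathrm{Tr}$, the trace in $P_i$ of all $P_j$ with $j\not\preceq^{\mathfrak p} i$. This $N$ is the smallest submodule of $P_i$ such that the quotient $P_i/N$ has no composition factor $L_j$ with $j\not\preceq^{\mathfrak p} i$. Dualizing, $(P_i/N)^\star$ is a submodule of $I_i$, and it is the largest submodule of $I_i$ all of whose composition factors are $L_j$ with $j\preceq^{\mathfrak p}i$. I will then verify that this largest submodule coincides with the intersection of the kernels of all maps $I_i\to I_j$ for $j\not\preceq^{\mathfrak p}i$. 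The argument is the standard one: a submodule $X\subseteq I_i$ has a composition factor $L_j$ if and only if some map $I_i\to I_j$ is nonzero on $X$, because $\mathrm{Hom}(X,I_j)$ extends to $I_i$ by injectivity of $I_j$. Hence $\nabla_{uw}\cong\Delta_{uw}^\star$, and Proposition~\ref{prop-std} gives part~\eqref{cor-costd.1}.

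For part~\eqref{cor-costd.2}, the same translation applies. The trace in $\mathrm{rad}\,\Delta_i$ of the $P_j$ with $j\sim i$ dualizes to the condition defining $\overline{\nabla}_i$ inside $\nabla_i$, namely the intersection of kernels of the non-injective maps $\nabla_i\to I_j$ with $j\sim i$. This identifies $\overline{\nabla}_{uw}$ with $\overline{\Delta}_{uw}^\star$, and Corollary~\ref{cor-pr-std} then finishes the proof.

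The only point requiring care is the proper costandard case, since the definition there uses non-injective homomorphisms rather than a trace. I would argue by comparing socle and composition factors. A module $M$ with simple socle $L_i$ has no nonzero non-injective map to $I_j$ for $j\sim i$ exactly when every such map factors through the socle. Dually, the corresponding quotient of $\Delta_i$ has no factor $L_j$ with $j\sim i$ in its radical coming from the top of a trace. So the two characterizations match under $\star$, and the rest of the argument is formal.
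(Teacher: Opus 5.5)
Your proposal is correct and is the paper's argument: the paper obtains the corollary in one line by applying $\star$ to Proposition~\ref{prop-std} and Corollary~\ref{cor-pr-std}, and your check that $\star$ turns the trace definitions of $\Delta_i,\overline{\Delta}_i$ into the kernel-intersection definitions of $\nabla_i,\overline{\nabla}_i$ is the routine verification behind that line. Your final paragraph is garbled, since a non-injective map out of a module with simple socle is one that vanishes on the socle, not one that factors through it. It is also unnecessary: $\star$ identifies the non-surjective maps $P_j\to\Delta_i$ (those landing in $\mathrm{rad}\,\Delta_i$) with the non-injective maps $\nabla_i\to I_j$, and it turns sums of images into intersections of kernels.
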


We can now formulate and prove the main result of this section.

\begin{theorem}\label{thm-main-1}
The pair $(A,{\preceq^\mathfrak{p}})$ is a stratified algebra.
\end{theorem}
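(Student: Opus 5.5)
We have to show that every indecomposable projective $P_x$, $x\in W$, admits a filtration whose subquotients are standard modules for ${\preceq^\mathfrak{p}}$. By Proposition~\ref{prop-std}, these are the modules $\mathrm{Ind}_{\eta_w}(P_\mathfrak{a}(u\cdot \mu_w))$ with $w\in{}_JW$ and $u\in W(J)$. The plan is to coarsen the ordinary Verma flag of $P_x$, which exists by \cite{BGG}, according to right $W(J)$-cosets.

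First, split the Verma flag of $P_x$ by cosets. Since $\mathcal{O}_0$ is highest weight, in every module $M$ with a Verma flag the Verma subquotients can be reordered so that those $\Delta_z$ with larger $z$ in the Bruhat order sit lower in the filtration. Order the cosets $W(J)w$, $w\in{}_JW$, by a linear extension of the Bruhat order on ${}_JW$ (larger $w$ lower). By the subword property, $z\in W(J)w$ and $z'\in W(J)w'$ with $z'>z$ force $w'\not< w$. This allows us to build a filtration
\begin{displaymath}
0=M_0\subset M_1\subset\dots\subset M_k=P_x
\end{displaymath}
in which each $M_i/M_{i-1}$ has a Verma flag with all subquotients $\Delta_{rw_i}$, $r\in W(J)$, for a single $w_i\in{}_JW$. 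Concretely, $M_{\ge i}$ is the trace of the projectives $P_z$ with $z$ in the union of the cosets at positions $\geq i$, intersected appropriately. Equivalently, the filtration comes from the Serre subquotient description of \cite[Subsection~6.2]{CMZ}. One checks via $\mathrm{Ext}^1(\Delta_a,\Delta_b)=0$ unless $a<b$ that such a coarsening exists.

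Second, identify each layer $N=M_i/M_{i-1}$, with $w=w_i$. Every Verma subquotient of $N$ is $\Delta_{rw}=\mathrm{Ind}_{\eta_w}(\Delta_\mathfrak{a}(r\cdot\mu_w))$ by Lemma~\ref{lem-s2.2-2}. Each such module is generated by its $\mathrm{Res}_{\eta_w}$-part and has highest $\mathfrak{h}^\perp$-component $\eta_w$. Hence the same holds for $N$: the space $\overline{\mathrm{Res}_{\eta_w}}(N)=\mathrm{Res}_{\eta_w}(N)$ consists of vectors killed by $\mathfrak{n}$, because $\eta_w$ is maximal among the $\mathfrak{h}^\perp$-weights of $N$. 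Set $X:=\mathrm{Res}_{\eta_w}(N)\in\mathcal{O}(\mathfrak{a})_{\mu_w}$. The counit $\mathrm{Ind}_{\eta_w}(X)\to N$ from Proposition~\ref{prop-s2.4-2} is then a map between modules with Verma flags. By exactness of $\mathrm{Res}_{\eta_w}$ and $\mathrm{Ind}_{\eta_w}$, $X$ has a Verma flag with $(X:\Delta_\mathfrak{a}(r\cdot\mu_w))=(N:\Delta_{rw})$. The counit is surjective since $N$ is generated by $X$, and the characters agree, so $N\cong\mathrm{Ind}_{\eta_w}(X)$.

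Third, show that $X$ is projective in $\mathcal{O}(\mathfrak{a})_{\mu_w}$. Then $X$ is a direct sum of modules $P_\mathfrak{a}(u\cdot\mu_w)$, and $N$ is a direct sum of the standard modules from Proposition~\ref{prop-std}, which proves the theorem. For projectivity, use adjunction:
\begin{displaymath}
\mathrm{Ext}^1_\mathfrak{a}(X,Y)\cong \mathrm{Ext}^1_{\mathfrak{g}}(\mathrm{Ind}_{\eta_w}X,\,?)
\end{displaymath}
does not transfer directly. Instead, check $\mathrm{Ext}^1_\mathfrak{a}(X,\nabla_\mathfrak{a}(r\cdot\mu_w))=0$, that is, verify the Verma-flag multiplicity identity characterizing projectives. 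By BGG reciprocity in $\mathcal{O}(\mathfrak{a})$, it suffices that $X$ has a Verma flag whose multiplicities are of the form $\sum_u c_u[\Delta_\mathfrak{a}(r\cdot\mu_w):L_\mathfrak{a}(u\cdot\mu_w)]$ with $c_u\ge0$, together with $X$ being "$\Delta$-flag and $\mathrm{Ext}^1(X,\nabla)=0$". This is the main obstacle. My first attempt would use a dimension count instead: compute $\mathrm{Hom}_\mathfrak{g}(P_x,\mathrm{Ind}_{\eta_w}(L_\mathfrak{a}(u\cdot\mu_w))^\star)$ using Corollary~\ref{cor-costd} together with the multiplicity identity \eqref{eq:form751}. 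This shows that the numbers $(N:\Delta_{rw})=[\Delta_{rw}:L_x]$ are exactly those of $P_\mathfrak{a}(u_x\cdot\mu_w)$ or $0$, by \eqref{eq:form751} and BGG reciprocity. A module with a Verma flag and the character of a projective whose top is forced by the top of $N$ must be that projective, so $X\cong P_\mathfrak{a}(u\cdot\mu_w)$ or $X=0$. If this fails, the fallback is to show $\mathrm{Ext}^1_\mathfrak{a}(X,L)=0$ by embedding extensions into $\mathfrak{g}$-extensions of $N$, using $\mathrm{Ext}^1_\mathfrak{g}(P_x,\cdot)=0$ and the vanishing of the relevant $\mathrm{Ext}^2$ from the outer layers.
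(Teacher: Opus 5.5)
\textbf{Gap: Step~3.} You flag this step yourself, and nothing in your proposal proves that $X=\mathrm{Res}_{\eta_w}(N)$ is projective in $\mathcal{O}(\mathfrak{a})_{\mu_w}$ for any layer $N$ other than the top one.

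There is also a fixable slip in Step~1: the orientation is reversed. In the paper's normalization $\Delta_e$ is projective, and $\mathrm{Ext}^1(\Delta_a,\Delta_b)\neq 0$ forces $a>b$. So the coset layers must be stacked with \emph{smaller} $w\in{}_JW$ lower, using that $z\mapsto$ its shortest coset representative is Bruhat-monotone. With your ordering, for $\mathfrak{sl}_3$ and $J=\{s\}$, the module $\Delta_{ts}$ would lie in the bottom layer of $P_{ts}$, although it must be the top Verma subquotient. Once reversed, Step~2 is fine.

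Your dimension count only works for the coset containing $x$. There $N$ is a quotient of $P_x$, so by adjunction $X$ has simple top $L_\mathfrak{a}(u\cdot\mu_w)$. Then \eqref{eq:form751} with BGG reciprocity gives $X$ the character of $P_\mathfrak{a}(u\cdot\mu_w)$, so $X\cong P_\mathfrak{a}(u\cdot\mu_w)$. But that just re-proves Proposition~\ref{prop-std}.

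For $x\notin W(J)w$, the dichotomy ``multiplicities of $P_\mathfrak{a}(u_x\cdot\mu_w)$, or $X=0$'' is false. The identity \eqref{eq:form751} computes $[\Delta_{rw}:L_{uw}]$ only for $uw$ in the same coset, whereas $[\Delta_{rw}:L_x]$ is usually nonzero. For example, for $\mathfrak{sl}_3$ and $J=\{s\}$, the $W(J)$-layer of $P_t$ is $\Delta_e\neq 0$, although $t\notin W(J)$.

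Knowing the Verma multiplicities of $X$ would still not suffice. For a lower layer, $N$ is only a subquotient of $P_x$, so its top is uncontrolled. A $\Delta$-flag with the multiplicities of a projective need not be projective: compare $\Delta_e\oplus\Delta_s$ with $P_s$ for $\mathfrak{sl}_2$.

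The fallback does not close the gap either. Since $\mathrm{Ind}_{\eta_w}$ is exact and fully faithful, $\mathrm{Ext}^1_\mathfrak{a}(X,Y)$ injects into $\mathrm{Ext}^1_\mathfrak{g}(N,\mathrm{Ind}_{\eta_w}Y)$. Dimension shifting along your filtration only reduces this to the vanishing of $\mathrm{Ext}^2_\mathfrak{g}(P_x/M_i,\mathrm{Ind}_{\eta_w}Y)$, and you have no argument for that.

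\textbf{The missing idea} is homological, and it makes the layer-by-layer analysis unnecessary. The paper uses only the filtration that comes for free. Each Verma module $\Delta_{rw}=\mathrm{Ind}_{\eta_w}(\Delta_\mathfrak{a}(r\cdot\mu_w))$ is, by exactness, filtered by modules $\mathrm{Ind}_{\eta_w}(L_\mathfrak{a}(\cdot))$. These are the proper standard modules by Corollary~\ref{cor-pr-std}, so every projective has a proper standard filtration. Frisk's criterion \cite[Theorem~1]{Fr} then yields costandard filtrations of all injectives. The costandard modules are precisely the $\star$-duals of the standard modules (Corollary~\ref{cor-costd}), so applying $\star$ gives standard filtrations of all projectives.

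So it is Ext-orthogonality between proper standard and costandard objects, together with the simple-preserving duality, that turns the trivial proper standard flag into a standard flag. Once the theorem is known, your Steps~1--2 do show a posteriori that each coset layer is $\mathrm{Ind}_{\eta_w}$ of a projective. Your argument does not establish this on its own.
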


\begin{proof}
All projectives in $\mathcal{O}(\mathfrak{g})$ have a Verma flag.
By Lemma~\ref{lem-s2.2-2}, each Verma module 
$\Delta_z$ is of the form 
$\mathrm{Ind}_{\eta_w}(\Delta_\mathfrak{a}(u\cdot \mu_w))$,
for some $u\in W(J)$ and $w\in {}_JW$. Each 
$\Delta_\mathfrak{a}(u\cdot \mu_w)$ has a composition series. 
By the exactness of parabolic induction and our description of 
proper standard modules in Corollary~\ref{cor-pr-std}, it follows
that $\mathrm{Ind}_{\eta_w}(\Delta_\mathfrak{a}(u\cdot \mu_w))$ 
has a filtration with 
proper standard subquotients. Consequently, all projective modules
in $\mathcal{O}(\mathfrak{g})$ have a proper standard filtration.

From \cite[Theorem~1]{Fr}, it follows that all injective modules in 
$\mathcal{O}(\mathfrak{g})$ have a costandard filtration.

Using $\star$ and our description of 
costandard modules in Corollary~\ref{cor-costd}, it follows
that all projective modules
in $\mathcal{O}(\mathfrak{g})$ have a standard filtration.
\end{proof}

We note that Theorem~\ref{thm-main-1} is inspired by and is a
generalization of the main result of \cite{FKM}.

\begin{example}
{\rm Consider $\mathfrak{g}=\mathfrak{sl}_3$
with $W=\{e,s,t,st,ts,w_0=sts=tst\}$ and choose
$W(J)=\{e,s\}$. Then ${}_JW=\{e,t,ts\}$. We have
\begin{displaymath}
\mathrm{Ind}_{\eta_e}(P_\mathfrak{a}(e\cdot \mu_e))=P_e=\Delta_e,\,\, 
\mathrm{Ind}_{\eta_t}(P_\mathfrak{a}(e\cdot \mu_t))=\Delta_t,\, \,
\mathrm{Ind}_{\eta_{ts}}(P_\mathfrak{a}(e\cdot \mu_{ts}))=\Delta_{ts}.
\end{displaymath}
Further,  $\mathrm{Ind}_{\eta_e}(P_\mathfrak{a}(s\cdot \mu_e))=P_s$
has $\Delta_e$ as a submodule and $\Delta_s$ as quotient.
The module $\mathrm{Ind}_{\eta_t}(P_\mathfrak{a}(s\cdot \mu_t))$
has $\Delta_t$ as a submodule and $\Delta_{st}$ as the corresponding quotient.
This module can be realized as the cokernel of the embedding of
$P_s$ into $P_{st}$. Finally, the module 
$\mathrm{Ind}_{\eta_{ts}}(P_\mathfrak{a}(s\cdot \mu_{ts}))$
has $\Delta_{ts}$ as a submodule and $\Delta_{w_0}$ as the corresponding
quotient. This module can be realized as the cokernel of the embedding of
$P_{st}$ into $P_{w_0}$.

Now about the filtrations of indecomposable projectives. 
As both $P_e$ and $P_s$ are induced modules, for them the filtrations
are obvious. As mentioned in the previous paragraph, 
$P_{st}$ surjects onto $\mathrm{Ind}_{\eta_t}(P_\mathfrak{a}(s\cdot \mu_t))$
with kernel $P_s$. Also, 
$P_{w_0}$ surjects onto 
$\mathrm{Ind}_{\eta_{ts}}(P_\mathfrak{a}(s\cdot \mu_{ts}))$
with kernel $P_{st}$. This gives our filtrations for
$P_{st}$ and $P_{w_0}$.

The module $P_t$ surjects onto $\Delta_t$ with kernel
$\Delta_e$. As both  $\Delta_t$ and $\Delta_e$ are parabolically induced
(the former is $\mathrm{Ind}_{\eta_t}(P_\mathfrak{a}(e\cdot \mu_t))$
while the latter is $\mathrm{Ind}_{\eta_e}(P_\mathfrak{a}(e\cdot \mu_e))$),
we have our filtration for $P_t$.

Finally, the module $P_{ts}$ has $P_s$ as submodule and the cokernel
surjects onto $\Delta_{ts}$ with kernel
$\Delta_t$. As both  $\Delta_{ts}$ and $\Delta_t$ are parabolically induced
(the former is $\mathrm{Ind}_{\eta_{ts}}(P_\mathfrak{a}(e\cdot \mu_{ts}))$
while the latter is $\mathrm{Ind}_{\eta_t}(P_\mathfrak{a}(e\cdot \mu_t))$),
we obtain our filtration for the module $P_{ts}$.\hfill$\blacksquare$
}
\end{example}

\subsection{Consequences}\label{s3.3}

\begin{corollary}\label{cor-s3.3-1}
{\hspace{2mm}}

{

\begin{enumerate}[$($a$)$]
\item\label{cor-s3.3-1.1} The images of the modules
$\mathrm{Ind}_{\eta_w}(P_\mathfrak{a}(u\cdot \mu_w))$, 
where $u\in W(J)$ and $w\in {}_JW$, 
in the Grothendieck group of $\mathcal{O}_0$
form there a basis.
\item\label{cor-s3.3-1.2} The images of the modules
$\mathrm{Ind}_{\eta_w}(P_\mathfrak{a}(u\cdot \mu_w))$, 
where $u\in W(J)$ and $w\in {}_JW$, 
in the Grothendieck group of  ${}^{\mathbb{Z}}\mathcal{O}_0$
form there an $\mathbb{A}$-basis.
\end{enumerate}
}
\end{corollary}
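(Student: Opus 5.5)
The plan is to prove part~\eqref{cor-s3.3-1.2} first and deduce part~\eqref{cor-s3.3-1.1} from it by specializing $v\mapsto 1$.

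By Proposition~\ref{prop-KL-indproj}, under the isomorphism of Subsection~\ref{s1.7} the class of $\mathrm{Ind}_{\eta_w}(P_\mathfrak{a}(u\cdot \mu_w))$ is $\underline{H}_uH_w$. So it suffices to show that $\{\underline{H}_uH_w\,:\,u\in W(J),\,w\in{}_JW\}$ is an $\mathbb{A}$-basis of $\mathbf{H}$. Write $\underline{H}_u=\sum_{r\in W(J)}p_{r,u}H_r$. Here the Kazhdan--Lusztig polynomials of $W(J)$ coincide with those of $W$ restricted to $W(J)$, though we only need the triangularity, which holds in either normalization. We have $p_{u,u}=1$, and $p_{r,u}\neq 0$ implies $r\leq u$. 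Since $\ell(rw)=\ell(r)+\ell(w)$ for $r\in W(J)$ and $w\in{}_JW$, we get
\begin{displaymath}
\underline{H}_uH_w=\sum_{r\in W(J),\, r\leq u}p_{r,u}H_{rw}=H_{uw}+\sum_{r<u}p_{r,u}H_{rw}.
\end{displaymath}
Every $x\in W$ factors uniquely as $x=uw$ with $u\in W(J)$ and $w\in{}_JW$. Order $W$ by any total order refining the relation $rw<uw$ whenever $r<u$ and $w$ is the same; the simplest choice is to order by the length of the $W(J)$-part. With respect to this order, the transition matrix from the standard basis $\{H_x\}$ to the family above is unitriangular with entries in $\mathbb{A}$. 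A unitriangular matrix over $\mathbb{A}$ is invertible over $\mathbb{A}$, so the family is an $\mathbb{A}$-basis. This proves~\eqref{cor-s3.3-1.2}.

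Alternatively, one can argue at the categorical level. By the proof of Proposition~\ref{prop-std}, the module $\mathrm{Ind}_{\eta_w}(P_\mathfrak{a}(u\cdot \mu_w))$ has a Verma flag with subquotients $\Delta_{rw}$, where $r\leq u$, and $\Delta_{uw}$ occurs exactly once, in degree zero. This gives the same unitriangularity directly.

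For~\eqref{cor-s3.3-1.1}, note that the Grothendieck group of $\mathcal{O}_0$ is obtained from that of ${}^{\mathbb{Z}}\mathcal{O}_0$ by applying $\mathbb{Z}\otimes_{\mathbb{A}}-$ with $v\mapsto 1$, which corresponds to forgetting the grading via $\mathbf{F}$. The images of the graded lifts are the classes of the ungraded modules. A unitriangular matrix specializes to a unitriangular integer matrix, which is invertible over $\mathbb{Z}$. Hence these classes form a $\mathbb{Z}$-basis of the Grothendieck group of $\mathcal{O}_0$, whose standard basis is the set of Verma classes.

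The only point needing care is the bookkeeping that ensures the unitriangularity survives both passages: the identification of the induced projective's class with $\underline{H}_uH_w$ in the graded setting, and the choice of grading shift so that the leading coefficient is exactly $1$ rather than a power of $v$. The latter is fixed by the convention that $P_\mathfrak{a}$ covers $L_\mathfrak{a}$ in degree zero. Beyond that, I expect no real obstacle.
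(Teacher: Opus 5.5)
Your proof is correct, but it takes a different route from the paper. The paper argues through the stratification. By Theorem~\ref{thm-main-1}, each indecomposable projective has a filtration by the modules $\mathrm{Ind}_{\eta_{w}}(P_\mathfrak{a}(u\cdot\mu_{w}))$, in which its own standard quotient occurs once and all other subquotients are indexed by elements strictly larger with respect to $\preceq^{\mathfrak{p}}$. So the transition matrix from the basis of projective classes to the new family is unitriangular. This gives the ungraded claim, and the graded one is then deduced from it.

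You instead compare with the Verma basis. The expansion $\underline{H}_uH_w=\sum_{r\le u}p_{r,u}H_{rw}$ (equivalently, the induced Verma flag of $\mathrm{Ind}_{\eta_w}(P_\mathfrak{a}(u\cdot\mu_w))$) gives a matrix that is block diagonal with respect to the cosets $W(J)w$ and unitriangular inside each block by Kazhdan--Lusztig triangularity for $W(J)$.

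Your argument is more elementary. It needs only Lemma~\ref{lem-s2.2-2}, exactness of induction and Proposition~\ref{prop-KL-indproj}, not the stratification theorem. It also runs in the logically clean direction, graded $\Rightarrow$ ungraded via $v\mapsto 1$. The paper's passage from ungraded to graded tacitly uses that graded multiplicities lie in $\mathbb{Z}_{\geq 0}[v,v^{-1}]$, so that a diagonal entry specializing to $1$ must be a unit $v^k$.

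The paper's route, in exchange, is the genuinely categorical one, and it yields the positivity of Corollary~\ref{cor-s3.3-2} at the same time. Finally, your concern about the leading coefficient being exactly $1$ is unnecessary, since any $v^k$ is already a unit in $\mathbb{A}$.
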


\begin{proof}
Due to our stratification, the transformation matrix from 
the basis of projectives to this new basis is upper triangular
with $1$'s on the diagonal. This implies the {
ungraded claim and the graded claim follows from the ungraded one.}
\end{proof}

The above corollary together with Proposition~\ref{prop-KL-indproj} 
is a categorical lift of the fact that the elements $\underline{H}_uH_w$ 
form a basis in $\mathbf{H}$. 

\begin{corollary}\label{cor-s3.3-2}
When expressed in this new basis, the coefficients 
of any $[P]$, where $P\in \mathcal{O}_0$ is projective, 
are given by (polynomials whose coefficients are)
non-negative integers.
\end{corollary}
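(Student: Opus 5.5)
It suffices to treat indecomposable graded projectives $P=P_x\langle k\rangle$, since $[P]$ for a general projective is an $\mathbb{A}_{\geq 0}$-combination of these, where $\mathbb{A}_{\geq 0}$ denotes Laurent polynomials with non-negative coefficients. The plan is to read the coefficients off a standard filtration. Theorem~\ref{thm-main-1} gives $P_x$ a filtration whose subquotients are standard modules for ${\preceq^\mathfrak{p}}$. By Proposition~\ref{prop-std}, these are exactly the modules $\mathrm{Ind}_{\eta_w}(P_\mathfrak{a}(u\cdot \mu_w))$.

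First I would lift this filtration to ${}^{\mathbb{Z}}\mathcal{O}_0$. The standard modules are defined as quotients of $P_{uw}$ by traces of projectives. Traces of graded modules in graded modules are graded submodules, so each standard module admits a graded lift and the defining procedure works verbatim in the graded category. The filtration of $P_x$ can also be produced gradedly: take the trace of the $P_z$ with $z$ maximal in ${\preceq^\mathfrak{p}}$ and iterate. This is the standard construction from \cite{CPS}, and it yields graded submodules. Consequently $P_x$ has a graded filtration whose subquotients are of the form $\mathrm{Ind}_{\eta_w}(P_\mathfrak{a}(u\cdot \mu_w))\langle j\rangle$.

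In the Grothendieck group of ${}^{\mathbb{Z}}\mathcal{O}_0$ this gives
\begin{displaymath}
[P_x]=\sum_{w\in{}_JW,\,u\in W(J)} c_{u,w}(v)\,[\mathrm{Ind}_{\eta_w}(P_\mathfrak{a}(u\cdot \mu_w))],
\end{displaymath}
where $c_{u,w}(v)=\sum_j m_j v^{-j}$ and $m_j$ is the number of occurrences of the corresponding shifted subquotient. Each $c_{u,w}$ therefore has non-negative integer coefficients. By Corollary~\ref{cor-s3.3-1}\eqref{cor-s3.3-1.2} these classes form an $\mathbb{A}$-basis, so this expression is the unique expansion of $[P_x]$ in the new basis and the coefficients are well defined.

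The main obstacle is the gradability of the filtration. To settle it I would use that the graded forgetful functor $\mathbf{F}$ commutes with taking traces of projectives: $\mathbf{F}$ of a graded projective cover is the ungraded projective cover, and a homomorphism from $P_z$ decomposes into homogeneous components. Alternatively, the multiplicities can be computed directly and shown to be non-negative. By BGG reciprocity in $\mathcal{O}(\mathfrak{a})$ together with the analogue of \eqref{eq:form751}, the multiplicity of $\mathrm{Ind}_{\eta_w}(P_\mathfrak{a}(u\cdot\mu_w))$ in $P_x$ equals $\dim\mathrm{Hom}(P_x,\mathrm{Ind}_{\eta_w}(L_\mathfrak{a}(u\cdot \mu_w))^\star)$, the Hom into proper costandards, counted with grading. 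In this form non-negativity is manifest.
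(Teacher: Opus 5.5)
Your proposal is correct and is the paper's argument: the paper's proof is just the observation that projectives have a filtration by the standard modules of Theorem~\ref{thm-main-1}. Your discussion of graded lifts of traces and filtrations fills in a detail the paper leaves implicit (cf.\ the proof of Corollary~\ref{cor-s3.3-1}). Your closing alternative is not needed, and its reciprocity formula is mis-justified: it comes from the Ext-orthogonality of standard and proper costandard modules listed in Subsection~\ref{s3.5}, not from BGG reciprocity in $\mathcal{O}(\mathfrak{a})$ together with \eqref{eq:form751}, which only compare multiplicities within a single coset $W(J)w$.
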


\begin{proof}
This is a direct consequence of the fact that all
projectives have a filtration by standard modules.
\end{proof}

In particular, the above corollary proves a categorical 
analogue of the fact that KL basis expands non-negatively 
with respect to a hybrid basis, see~\cite[Theorem~3.2]{GH07}.

\begin{corollary}\label{cor-s3.3-5}
Let $\mathfrak{q}\subset\mathfrak{p}\subset\mathfrak{g}$
be two parabolic subalgebras. Then all standard modules with respect to
$\preceq^{\mathfrak{p}}$ have a filtration with subquotients being
standard modules with respect to $\preceq^{\mathfrak{q}}$.
\end{corollary}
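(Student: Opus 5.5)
Let $J'\subset J$ be the subsets of $S$ defining $\mathfrak{q}\subset\mathfrak{p}$. Denote by $\mathfrak{a}'\subset\mathfrak{a}$ the corresponding Levi subalgebras, and let $\mathfrak{n}',\mathfrak{n}$ be the corresponding nilradicals. The plan is to use transitivity of parabolic induction. Observe that $\mathfrak{q}\cap\mathfrak{a}=:\mathfrak{q}_\mathfrak{a}$ is a parabolic subalgebra of the reductive Levi $\mathfrak{a}\oplus\mathfrak{h}^\perp$, with Levi $\mathfrak{a}'$. Hence, for a weight $\mathfrak{a}'$-module $M$ with suitable central character $\eta'$, we have
\begin{displaymath}
U(\mathfrak{g})\otimes_{U(\mathfrak{q})}M\cong
U(\mathfrak{g})\otimes_{U(\mathfrak{p})}\big(U(\mathfrak{p})\otimes_{U(\mathfrak{q})}M\big),
\end{displaymath}
and $U(\mathfrak{p})\otimes_{U(\mathfrak{q})}M$ is the parabolic induction from $\mathfrak{q}_\mathfrak{a}$ to $\mathfrak{a}$, extended by $\mathfrak{n}$ acting trivially. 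This works because $\mathfrak{n}\subset\mathfrak{n}'$ is an ideal of $\mathfrak{p}$ and acts trivially on the induced module, by PBW. So $\mathrm{Ind}^{\mathfrak{g}}_{\mathfrak{q}}=\mathrm{Ind}^{\mathfrak{g}}_{\mathfrak{p}}\circ\mathrm{Ind}^{\mathfrak{a}}_{\mathfrak{q}_\mathfrak{a}}$ with the appropriate $\eta$'s.

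Next, fix a $\preceq^{\mathfrak{p}}$-standard module $\mathrm{Ind}_{\eta_w}(P_\mathfrak{a}(u\cdot\mu_w))$ with $w\in{}_JW$ and $u\in W(J)$. The key point is that Theorem~\ref{thm-main-1} and Proposition~\ref{prop-std} apply equally to the reductive Lie algebra $\mathfrak{a}$ (the center $\mathfrak{h}^\perp$ is harmless) and to the block $\mathcal{O}(\mathfrak{a})_{\mu_w}$, which is equivalent to the principal block of $\mathfrak{a}$ by Subsection~\ref{s1.8}. Applied to $\mathfrak{a}$ with the parabolic $\mathfrak{q}_\mathfrak{a}$, they say that $P_\mathfrak{a}(u\cdot\mu_w)$ has a filtration whose subquotients are the $\mathfrak{q}_\mathfrak{a}$-standard modules $\mathrm{Ind}^{\mathfrak{a}}(P_{\mathfrak{a}'}(x\cdot\mu'))$, where $u=xy$ with $x\in W(J')$ and $y\in{}_{J'}W(J)$. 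Applying the exact functor $\mathrm{Ind}_{\eta_w}$ to this filtration and using the transitivity above, we obtain a filtration of $\mathrm{Ind}_{\eta_w}(P_\mathfrak{a}(u\cdot\mu_w))$ with subquotients of the form $\mathrm{Ind}^{\mathfrak{g}}_{\mathfrak{q}}(P_{\mathfrak{a}'}(x\cdot\mu'))$.

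It remains to identify these subquotients as $\preceq^{\mathfrak{q}}$-standard modules via Proposition~\ref{prop-std} for $\mathfrak{q}$. To do this, we must check that the highest weights match. Each subquotient has the form $\mathrm{Ind}_{\eta'_{w'}}(P_{\mathfrak{a}'}(x\cdot\mu'_{w'}))$ with $w'=yw$. Here $yw\in{}_{J'}W$, because $y\in{}_{J'}W(J)$, $w\in{}_JW$, and lengths add, so $J'$-minimality is preserved. Moreover, $\mu'_{w'}$ is the restriction of $yw\cdot 0$ to $\mathfrak{h}_{\mathfrak{a}'}$, while $\eta'_{w'}$ is its projection to $(\mathfrak{h}^{\perp'})^*$. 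This bookkeeping is the main obstacle. One has to verify that the $\mathfrak{a}$-dominant weight $\mu_w$, once parabolically decomposed inside $\mathfrak{a}$, produces exactly the data $(\mu'_{yw},\eta'_{yw})$ used in Subsection~\ref{s2.5}, including the compatibility of the central characters through $\mathfrak{h}^{\perp}\subset\mathfrak{h}^{\perp'}$. I would check this at the level of weights. By Lemma~\ref{lem-s2.2-2}, both sides of the transitivity identity send Vermas to the same Vermas $\Delta_{xyw}$, and the characters are determined by the full $\mathfrak{h}$-weight $xyw\cdot 0$. This pins down $\eta'$ uniquely as $\mathbf{p}'(yw\cdot 0)$.

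In the graded setting the argument is identical, with grading shifts. As a consistency check, Proposition~\ref{prop-KL-indproj} gives $\underline{H}_uH_w=\sum(\text{coefficients})\,\underline{H}_xH_yH_w=\sum(\text{coefficients})\,\underline{H}_xH_{yw}$, which recovers the positivity result of \cite{GH07,BMS}.
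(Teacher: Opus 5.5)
Your proposal is correct and is essentially the paper's argument: apply Theorem~\ref{thm-main-1} (with Proposition~\ref{prop-std}) to $\mathfrak{a}$ and its parabolic subalgebra $\mathfrak{a}\cap\mathfrak{q}$, then push the resulting filtration of $P_\mathfrak{a}(u\cdot\mu_w)$ through the exact functor $\mathrm{Ind}_{\eta_w}$. The paper identifies the induced subquotients by comparing endomorphism algebras (via \cite[Theorem~32]{CMZ} or Proposition~\ref{prop-s2.4-2}), whereas you do it explicitly through transitivity of induction, the check $yw\in{}_{J'}W$ and the weight bookkeeping, which is sound; just note that the subquotients are indexed by all $z=xy\in W(J)$, not only by $u=xy$.
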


\begin{proof}
For $w\in {}_JW$, consider { the direct sum of 
all $\mathrm{Ind}_{\eta_w}(P_\mathfrak{a}(u\cdot \mu_w))$, where
$u\in W(J)$. Using either the equivalence in \cite[Theorem~32]{CMZ}
or Proposition~\ref{prop-s2.4-2}, we see that
the endomorphism algebra of this direct sum is isomorphic to the
endomorphism algebra of a basic projective generator for
$\mathcal{O}_0(\mathfrak{a})$, that is, the direct sum of all
$P_\mathfrak{a}(u\cdot \mu_w)$, over all $u\in W(J)$.}

Note that 
$\mathfrak{a}\cap\mathfrak{q}$ is a parabolic subalgebra
of $\mathfrak{a}$. Now the claim follows from the 
exactness of the parabolic induction, applying 
Theorem~\ref{thm-main-1} to the algebra $\mathfrak{a}$
and its parabolic subalgebra $\mathfrak{a}\cap\mathfrak{q}$.
\end{proof}

The above statement give a categorical interpretation of 
a results in \cite{BMS}.


\subsection{Action of projective functors}\label{s3.4}

In this section we describe an interesting (and expected) property
of modules having (proper) (co)-standard filtrations.

\begin{proposition}\label{prop-s3.4-1}
For $\mathfrak{p}\subset \mathfrak{g}$ as above, the full 
subcategory of all modules in $\mathcal{O}_0(\mathfrak{g})$
that admit a filtration with proper standard subquotients is
stable under the action of projective functors.
\end{proposition}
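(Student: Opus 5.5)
The plan is to reduce the statement to the tensor identity for parabolic induction. Since the proper standard filtration classes are built from exact functors, it is enough to check the proper standard modules themselves. By Corollary~\ref{cor-pr-std}, these are the generalized Verma modules $\mathrm{Ind}_{\eta_w}(L_\mathfrak{a}(u\cdot\mu_w))$.

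Every projective functor on $\mathcal{O}_0$ is a direct summand of a functor of the form $\mathrm{pr}_0\circ(V\otimes_\mathbb{C}{}_-)$. Here $V$ is a finite dimensional $\mathfrak{g}$-module and $\mathrm{pr}_0$ is the exact projection onto $\mathcal{O}_0$. I would therefore work first with $V\otimes{}_-$ and deal with the summand at the end.

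For $V\otimes{}_-$ I use the tensor identity
\begin{displaymath}
V\otimes \big(U(\mathfrak{g})\otimes_{U(\mathfrak{p})}M\big)\cong
U(\mathfrak{g})\otimes_{U(\mathfrak{p})}\big(V|_{\mathfrak{p}}\otimes M\big).
\end{displaymath}
Decompose $V$ into $\mathfrak{h}^\perp$-weight spaces $V_{\eta'}$. Ordering these eigenvalues compatibly with $\mathfrak{n}$, which raises them, gives a $\mathfrak{p}$-filtration of $V|_\mathfrak{p}$. Its subquotients are the $\mathfrak{a}\oplus\mathfrak{h}^\perp$-modules $V_{\eta'}$, on which $\mathfrak{n}$ acts trivially. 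Tensoring with $M=L_\mathfrak{a}(u\cdot\mu_w)$ and using exactness of induction, $V\otimes \mathrm{Ind}_{\eta_w}(L_\mathfrak{a}(u\cdot\mu_w))$ gets a filtration with subquotients $\mathrm{Ind}_{\eta_w+\eta'}(V_{\eta'}\otimes L_\mathfrak{a}(u\cdot\mu_w))$.

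Each $V_{\eta'}\otimes L_\mathfrak{a}(u\cdot\mu_w)$ is a finite dimensional module tensored with a module in $\mathcal{O}(\mathfrak{a})$, so it lies in $\mathcal{O}(\mathfrak{a})$ and has finite length. Refining by a composition series and using exactness again, $V\otimes \mathrm{Ind}_{\eta_w}(L_\mathfrak{a}(u\cdot\mu_w))$ has a filtration with subquotients $\mathrm{Ind}_\eta(L_\mathfrak{a}(\lambda))$. Such a module has simple top $L(\mu(\lambda,\eta))$, so it lies in a single block, and $\mathrm{pr}_0$ either kills it or keeps it. If it lies in $\mathcal{O}_0$, then $\mu(\lambda,\eta)=z\cdot 0$ for some $z\in W$. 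Writing $z=u'w'$ with $u'\in W(J)$ and $w'\in{}_JW$ forces $\eta=\eta_{w'}$ and $\lambda=u'\cdot\mu_{w'}$. So the surviving subquotient is exactly a proper standard module by Corollary~\ref{cor-pr-std}. Hence $\mathrm{pr}_0(V\otimes X)$ has a proper standard filtration whenever $X$ does.

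The main obstacle is passing to direct summands, since a projective functor is only a summand of $\mathrm{pr}_0(V\otimes{}_-)$. My first attempt would use the homological characterization available in stratified algebras (Theorem~\ref{thm-main-1} together with \cite{Fr}, in the style of Ringel and \`Agoston--Dlab--Luk\'acs). The class of modules with a proper standard filtration should equal
\begin{displaymath}
\{M\,:\,\mathrm{Ext}^1(M,\nabla)=0 \text{ for all costandard modules }\nabla\},
\end{displaymath}
and a class defined this way is obviously closed under direct summands. If that characterization is awkward to justify, I would fall back on adjunction. The adjoint $\theta^{*}$ of a projective functor $\theta$ is again a projective functor. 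Dualizing the argument above with $\star$ shows that it preserves costandard filtrations, because the costandard modules are duals of induced projectives and $V\otimes{}_-$ commutes with $\star$ up to swapping $V$ with $V^*$. One then obtains $\mathrm{Ext}^1(\theta M,\nabla)\cong\mathrm{Ext}^1(M,\theta^{*}\nabla)=0$, which gives the claim through the $\mathrm{Ext}$-vanishing criterion.
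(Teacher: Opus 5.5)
Your argument is correct, but its central step takes a different route from the paper's. The paper first proves that the modules with a proper standard filtration are exactly the modules in $\mathcal{O}_0$ that are free over $U(\mathfrak{n}_-^{\mathfrak{p}})$. This is a BGG-style induction that peels off $\mathrm{Ind}_{\eta_w}\mathrm{Res}_{\eta_w}(M)$ for a $\preceq^{\mathfrak{p}}$-maximal $w$. The paper then treats this freeness condition as obviously stable under $V\otimes{}_-$.

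You instead use the tensor identity and the $\mathfrak{h}^\perp$-weight filtration of $V|_{\mathfrak{p}}$. This filters $V\otimes\mathrm{Ind}_{\eta_w}(L_{\mathfrak{a}}(u\cdot\mu_w))$ by generalized Verma modules $\mathrm{Ind}_\eta(L_{\mathfrak{a}}(\lambda))$. You then identify which of these survive $\mathrm{pr}_0$, using their simple top and the fact that $\mathbf{p}(u'\cdot x)=\mathbf{p}(x)$ for $u'\in W(J)$.

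For passing to direct summands, your primary route is the same as the paper's: closure under summands via homological duality with the costandard modules. Note that the input needed here is that injectives have costandard filtrations, which is the $\star$-dual of Theorem~\ref{thm-main-1}.

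What each route buys:
\begin{itemize}
\item The paper's route yields, as a by-product, an intrinsic freeness criterion for proper standard filtrations, analogous to the BGG criterion for Verma flags.
\item Your route is more explicit and avoids the peeling induction. It also handles the block projection $\mathrm{pr}_0$ visibly, whereas the paper leaves implicit why the $\mathcal{O}_0$-component of a $U(\mathfrak{n}_-^{\mathfrak{p}})$-free module is again free.
\item Run with $P_{\mathfrak{a}}$ in place of $L_{\mathfrak{a}}$, your argument directly gives the analogous statement for standard filtrations. This uses that $V_{\eta'}\otimes P_{\mathfrak{a}}(u\cdot\mu_w)$ is projective in $\mathcal{O}(\mathfrak{a})$ and that an induced indecomposable projective has simple top.
\end{itemize}

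Your fallback is not a genuine alternative, because its last step invokes the same $\mathrm{Ext}^1$-criterion it was meant to replace. The adjunction is also unnecessary: $\mathrm{Ext}^1(\theta M,\nabla)$ is a direct summand of $\mathrm{Ext}^1(\mathrm{pr}_0(V\otimes M),\nabla)$, which vanishes. Moreover, dualizing your argument with $\star$ as written gives stability of \emph{proper} costandard filtrations, not costandard ones. You can drop the fallback; the first route is complete.
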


\begin{proof}
Denote by $\mathfrak{n}_-^\mathfrak{p}$ the linear span of
all $\mathfrak{g}_\alpha$, where $\alpha\in\mathbf{R}_-$
and $\mathfrak{g}_\alpha\not\subset \mathfrak{a}$. We claim that
the full  subcategory $\mathcal{X}$
of all modules in $\mathcal{O}_0(\mathfrak{g})$
that admit a filtration with proper standard subquotients
coincides with the full  subcategory $\mathcal{Y}$ of all modules in 
$\mathcal{O}_0(\mathfrak{g})$ which are free over 
$U(\mathfrak{n}_-^\mathfrak{p})$ (possibly of infinite rank).

As each proper standard module is obtained by parabolic induction,
it is free over $U(\mathfrak{n}_-^\mathfrak{p})$ by construction.
This implies $\mathcal{X}\subset \mathcal{Y}$.

To prove the opposite inclusions, we use the argument similar
to \cite[Proposition~2]{BGG} and proceed by induction on 
the length of a module. Let $M\in \mathcal{Y}$.
If $M=0$, then $M\in \mathcal{X}$ and we are done.
This is the basis of our induction.

To prove the induction step, assume $M\neq 0$.
Choose $w\in {}_JW$ maximal with respect to ${\preceq^\mathfrak{p}}$
such that $\mathrm{Res}_{\eta_w}(M)\neq 0$. 
{The maximality of $w$, in particular, implies that 
$\mathfrak{n}\mathrm{Res}_{\eta_w}(M)=0$, equivalently, we have
$\mathrm{Res}_{\eta_w}(M)=\overline{\mathrm{Res}_{\eta_w}}(M)$.}
By exactness of induction, any composition series of
$\mathrm{Res}_{\eta_w}(M)$ gives rise a filtration of 
$\mathrm{Ind}_{\eta_w}\mathrm{Res}_{\eta_w}(M)$ with proper standard
subquotients. Therefore 
$\mathrm{Ind}_{\eta_w}\mathrm{Res}_{\eta_w}(M)\in \mathcal{X}$.

Since the action of $U(\mathfrak{n}_-^\mathfrak{p})$ on $M$ is free,
by adjunction, the module $\mathrm{Ind}_{\eta_w}\mathrm{Res}_{\eta_w}(M)$
injects into $M$. Due to the maximality assumption on $w$, the action of 
$U(\mathfrak{n}_-^\mathfrak{p})$ on the quotient module
$M/\mathrm{Ind}_{\eta_w}\mathrm{Res}_{\eta_w}(M)$ remains 
free. 
Moreover, this quotient has strictly smaller length than $M$.
This completes our induction step and hence shows that 
$\mathcal{Y}\subset \mathcal{X}$, implying $\mathcal{X}=\mathcal{Y}$.

The category $\mathcal{Y}$ (and hence the category $\mathcal{X}$ as well) 
is obviously closed with respect to 
tensoring with finite dimensional $\mathfrak{g}$-modules. 
Finally, the category $\mathcal{X}$, being homologically dual to 
the category of modules with costandard filtration, is idempotent
split. As any projective functor is a direct summand of tensoring
with some finite dimensional $\mathfrak{g}$-module, 
the claim of the proposition follows.
\end{proof}

\begin{corollary}\label{cor-s3.4-2}
For $\mathfrak{p}\subset \mathfrak{g}$ as above, we have:
\begin{enumerate}[$($a$)$]
\item \label{cor-s3.4-2.1} The full 
subcategory of all modules in $\mathcal{O}_0(\mathfrak{g})$
that admit a filtration with proper costandard subquotients is
stable under the action of projective functors.
\item \label{cor-s3.4-2.2} The full 
subcategory of all modules in $\mathcal{O}_0(\mathfrak{g})$
that admit a filtration with standard subquotients is
stable under the action of projective functors.
\item \label{cor-s3.4-2.3} The full 
subcategory of all modules in $\mathcal{O}_0(\mathfrak{g})$
that admit a filtration with costandard subquotients is
stable under the action of projective functors.
\end{enumerate}
\end{corollary}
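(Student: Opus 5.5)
Part (a) follows from Proposition~\ref{prop-s3.4-1} by applying the duality $\star$. By Corollary~\ref{cor-costd}, $\star$ interchanges proper standard and proper costandard modules. Hence it also interchanges the full subcategories of modules with the corresponding filtrations. Moreover, every projective functor $\theta$ commutes with $\star$ up to isomorphism, that is, $\theta(M^\star)\cong(\theta M)^\star$. This holds because $\theta$ is a direct summand of tensoring with a finite dimensional module $V$, and $(V\otimes M)^\star\cong V^*\otimes M^\star\cong V\otimes M^\star$ up to replacing $V$ by a module with the same indecomposable summand structure. More concretely, the indecomposable projective functors $\theta_w$ are $\star$-stable, as is standard. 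So if $M$ has a proper costandard filtration, then $M^\star$ has a proper standard filtration, and so does $\theta(M^\star)$ by Proposition~\ref{prop-s3.4-1}. Applying $\star$ once more shows that $\theta M$ has a proper costandard filtration.

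For (b) and (c) we use the homological characterizations available in any stratified algebra, here $(A,\preceq^\mathfrak{p})$ by Theorem~\ref{thm-main-1}. By the standard theory of stratified algebras (\cite{Fr}, \cite{Dl}), a module $M$ has a standard filtration if and only if $\mathrm{Ext}^1(M,\overline{\nabla}_i)=0$ for all $i$, equivalently $\mathrm{Ext}^{k}(M,\overline{\nabla}_i)=0$ for all $k>0$. Dually, $M$ has a costandard filtration if and only if $\mathrm{Ext}^1(\overline{\Delta}_i,M)=0$ for all $i$. The proper versions have analogous characterizations in terms of $\nabla_i$ and $\Delta_i$; this is the ``homological duality'' already used in the proof of Proposition~\ref{prop-s3.4-1}.

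Now let $M$ have a standard filtration and let $\theta$ be a projective functor. The functor $\theta$ has an exact biadjoint $\theta'$, which is also a projective functor; for $\theta_w$ this is $\theta_{w^{-1}}$. Therefore
\begin{displaymath}
\mathrm{Ext}^1(\theta M,\overline{\nabla}_i)\cong \mathrm{Ext}^1(M,\theta'\overline{\nabla}_i).
\end{displaymath}
By (a), $\theta'\overline{\nabla}_i$ has a proper costandard filtration. Since $\mathrm{Ext}^1(M,-)$ vanishes on every proper costandard module, it vanishes on $\theta'\overline{\nabla}_i$ by induction on the length of the filtration. Hence $\theta M$ has a standard filtration, which proves (b). Then (c) follows from (b) by applying $\star$ and using Corollary~\ref{cor-costd}\eqref{cor-costd.1}. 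Alternatively, (c) can be obtained by the same adjunction argument, using Proposition~\ref{prop-s3.4-1} and $\mathrm{Ext}^1(\overline{\Delta}_i,\theta M)\cong\mathrm{Ext}^1(\theta'\overline{\Delta}_i,M)$.

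The main point requiring care is the Ext-vanishing characterization of modules with standard filtrations in the stratified (rather than quasi-hereditary) setting. I would cite \cite[Theorem~1]{Fr} and its proof, where it is shown that $\mathcal{F}(\Delta)={}^{\perp}\mathcal{F}(\overline{\nabla})$. The commutation of $\star$ with projective functors is standard, see \cite{BG}.
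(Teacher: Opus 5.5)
Your proof is correct and takes essentially the paper's approach: (a) is obtained by applying $\star$ to Proposition~\ref{prop-s3.4-1}, and the other two parts follow from biadjointness of projective functors, the Ext-characterizations of filtered modules in a stratified algebra, and $\star$. The paper only runs the last step in the mirror order: it proves (c) directly via $\mathrm{Ext}^1(\overline{\Delta}_i,\theta M)\cong\mathrm{Ext}^1(\theta'\overline{\Delta}_i,M)$ and Proposition~\ref{prop-s3.4-1} (your ``alternative'') and then deduces (b) by duality; separately, the identity you want is $(V\otimes M)^\star\cong V^\star\otimes M^\star$ with $V^\star\cong V$, not one involving the linear dual $V^*$, though your fallback to $\star$-stability of $\theta_w$ makes this harmless.
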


\begin{proof}
Claim~\eqref{cor-s3.4-2.1} follows from 
Proposition~\ref{prop-s3.4-1} by duality since the
latter swaps proper standard and proper costandard modules
and commutes with the action of projective functors.
Similarly, Claim~\eqref{cor-s3.4-2.2}
follows from Claim~\eqref{cor-s3.4-2.3} by duality.

Claim~\eqref{cor-s3.4-2.3} follows from Proposition~\ref{prop-s3.4-1}
by adjunction as the category of modules with {
proper standard filtration is left homologically dual to the category of
modules with costandard filtration.}
\end{proof}

{
The graded lift of 
Corollary~\ref{cor-s3.4-2}\eqref{cor-s3.4-2.2} gives a conceptual
categorical explanation of \cite[Theorem~3.2]{GH07}.}

\subsection{Tilting modules}\label{s3.6}

Another standard aspect of stratified algebras is the titling
theory, originated in \cite{Ri} and further developed
in \cite{AHLU,Fr,FM}. A tilting module with respect to our
stratified structure is a module which admits both a 
standard and a proper {costandard} filtration. From the previous subsection,
we know that both, the category of modules which admit a standard
filtration and the category of modules which admit a proper
{costandard} filtration, are closed with respect to the action of 
projective functors. Consequently, the category of tilting modules
is closed under the action of projective functors. This 
allows us to describe indecomposable tilting modules for our
stratified structure in a fairly easy way.
Recall that $w_0^\mathfrak{p}$ denotes the longest element
in $W(J)$.

\begin{proposition}\label{prop-tilting1}
The indecomposable tilting modules with respect to our
stratified structure are exactly the modules 
$\theta_x\Delta_{w_0^\mathfrak{p}w_0}$,
where $x\in W$.
\end{proposition}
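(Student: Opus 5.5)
The plan is to show three things. First, $\Delta_{w_0^\mathfrak{p}w_0}$ is itself a tilting module for $\preceq^\mathfrak{p}$. Second, projective functors preserve tilting modules. Third, the modules $\theta_x\Delta_{w_0^\mathfrak{p}w_0}$, for $x\in W$, are indecomposable and pairwise non-isomorphic. Since indecomposable tilting modules for a stratified algebra are in bijection with the simple modules (see \cite{AHLU,Fr}), there are $|W|$ of them, and the count then finishes the proof.

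\textbf{Step 1: $\Delta_{w_0^\mathfrak{p}w_0}$ is tilting.} Set $w:=w_0^\mathfrak{p}w_0$. This is the shortest representative of the coset $W(J)w_0$, so it is the Bruhat-maximal element of ${}_JW$, and hence it is $\preceq^\mathfrak{p}$-minimal. Since $\mu_w$ is dominant for $\mathfrak{a}$, we have $P_\mathfrak{a}(\mu_w)=\Delta_\mathfrak{a}(\mu_w)$. Proposition~\ref{prop-std} and Lemma~\ref{lem-s2.2-2} then show that $\Delta_w$ is the standard module for $\preceq^\mathfrak{p}$ indexed by $w$, so it trivially has a standard filtration.

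For the proper costandard filtration, I would show that every proper standard module $\mathrm{Ind}_{\eta_w}(L_\mathfrak{a}(u\cdot\mu_w))$ in this minimal stratum is simple. By the PBW argument used in the proof of Proposition~\ref{prop-std}, every composition factor $L_z$ other than the top satisfies $\overline{\mathrm{Res}_{\eta_w}}(L_z)=0$. Such a $z$ must also satisfy $z\geq uw$ in the Bruhat order, because it is a composition factor of $\Delta_{uw}$. Hence $z\in W(J)w_0$, since the Bruhat interval $[w_0^\mathfrak{p}w_0,w_0]$ is exactly this coset. This forces $z$ to lie in the same stratum, which contradicts $\overline{\mathrm{Res}_{\eta_w}}(L_z)=0$. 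So the proper standard modules of this stratum are simple, and by Corollary~\ref{cor-costd} they coincide with the proper costandard modules of this stratum. Every composition factor of $\Delta_w$ lies in $W(J)w_0$, so a composition series of $\Delta_w$ is a proper costandard filtration. Therefore $\Delta_w$ is tilting.

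\textbf{Step 2: closure under projective functors.} By Corollary~\ref{cor-s3.4-2}\eqref{cor-s3.4-2.2} and \eqref{cor-s3.4-2.1}, projective functors preserve both standard filtrations and proper costandard filtrations. Hence each $\theta_x\Delta_{w_0^\mathfrak{p}w_0}$ is tilting.

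\textbf{Step 3: indecomposability and distinctness.} I expect this to be the main obstacle. My first approach is to use Arkhipov's twisting functor $\top_{y}$ with $y=w_0^\mathfrak{p}w_0$, relying on three facts from the literature. It commutes with projective functors, it satisfies $\top_y\Delta_e\cong\Delta_y$, and by Andersen--Stroppel it is fully faithful on modules with a Verma flag. These give
\[
\theta_x\Delta_y\cong\theta_x\top_y\Delta_e\cong\top_y\theta_x\Delta_e\cong\top_y P_x .
\]
Consequently $\mathrm{End}(\theta_x\Delta_y)\cong\mathrm{End}(P_x)$, which is local, so $\theta_x\Delta_y$ is indecomposable. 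Likewise, $\theta_x\Delta_y\cong\theta_{x'}\Delta_y$ implies $P_x\cong P_{x'}$, and hence $x=x'$.

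If full faithfulness is not available in this form, my fallback is to compute $\mathrm{End}(\theta_x\Delta_y)$ through the adjunction $\mathrm{Hom}(\Delta_y,\theta_{x^{-1}}\theta_x\Delta_y)$. I would then compare it with the corresponding computation for $\Delta_e$ using graded characters from Subsection~\ref{s1.7}.

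Finally, Steps 1--3 produce $|W|$ pairwise non-isomorphic indecomposable tilting modules. This equals the number of indecomposable tilting modules for $(A,\preceq^\mathfrak{p})$, so the list is complete.
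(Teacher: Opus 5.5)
Your proof is correct, and its skeleton is the paper's. $\Delta_{w_0^\mathfrak{p}w_0}$ is tilting because its stratum is $\preceq^\mathfrak{p}$-minimal. Corollary~\ref{cor-s3.4-2} makes the tilting class stable under projective functors. A count against $|W|$ then finishes the argument. The differences lie in two sub-steps.

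In Step~1, the paper only asserts that the proper costandard modules of a minimal stratum are simple. This is formal: the radical of any proper standard module has composition factors only in strictly $\prec^\mathfrak{p}$-smaller strata. Your PBW argument via $[w_0^\mathfrak{p}w_0,w_0]=W(J)w_0$ is a correct, more hands-on check of the same point.

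In Step~3, the paper uses the known fact that an indecomposable projective functor sends a Verma module to an indecomposable module. It then gets pairwise non-isomorphism by combining \cite[Corollary~6.4]{Jo} with \cite[Proposition~7.2]{KMM2}. You instead identify $\theta_x\Delta_y\cong\top_yP_x$ and use that $\top_y$ is fully faithful on modules with a Verma flag. That holds because $\top_y$ is acyclic there and $\mathcal{L}\top_y$ is a derived auto-equivalence.

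Your route gives indecomposability and distinctness at once, and even $\mathrm{End}(\theta_x\Delta_y)\cong\mathrm{End}(P_x)$. It is closely related to the paper's remark after the proof, which describes these modules as $\mathbf{G}_{w_0^\mathfrak{p}}$-twists of the classical tilting modules. The paper's route keeps twisting functors out of the proof, at the cost of two external citations. Your fallback is not needed.
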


\begin{proof}
Let $w:=w_0^\mathfrak{p}w_0\in {}_JW$. Hence
$\Delta_{w}=\mathrm{Ind}_{\eta_w}(P_\mathfrak{a}(e\cdot \mu_w))$
is a standard module by Lemma~\ref{lem-s2.2-2}.
Also note that  $w$ is a minimal element with respect to 
$\preceq^\mathfrak{p}$. Hence $\Delta_{w}$ is a tilting module.
Indeed, it is a standard module by definition and it has
a filtration by proper costandard modules since the latter are simple
due to minimality of $w$.
Applying $\theta_x$, for $x\in W$, produces tilting modules,
as pointed out above. Moreover, all $\theta_x\Delta_w$
are indecomposable as the outcome of an application of an indecomposable
projective functor to a Verma module is always indecomposable.
Finally, all $\theta_x\Delta_w$ are pairwise non-isomorphic.
This follows by combining \cite[Corollary~6.4]{Jo} with 
\cite[Proposition~7.2]{KMM2}.
This gives the correct number $|W|$ of pairwise non-isomorphic
indecomposable tilting modules.
The claim of the lemma follows.
\end{proof}

For $w\in W$, let $\top_w$ denote the twisting functor for $w$,
see \cite{AS}. We also denote by 
$\mathbf{G}_w:=\star\circ\top_{w^{-1}}\circ\star$
the right adjoint of $\top_w$, usually called a {\em completion functor},
see \cite{MS2}. Recall that twisting functors
(and hence also completion functors) commute with projective 
functors. Moreover, 
$\mathbf{G}_{w_0^\mathfrak{p}}\Delta_{w_0}=\Delta_{w_0^\mathfrak{p}w_0}$.
Also, all classical tilting modules in $\mathcal{O}_0$ are
exactly the modules of the form $\theta_x\Delta_{w_0}$,
where $x\in W$. Consequently, the indecomposable tilting modules 
with respect to our stratified structure are exactly the 
$\mathbf{G}_{w_0^\mathfrak{p}}$-twists of the usual tilting modules
in $\mathcal{O}_0$ (we also note that twisting functors are
exact on modules with Verma flag, see \cite{AS}, hence completion
functors are exact on modules with dual Verma flag).

\subsection{The zoo of different bases in the Grothendieck group}\label{s3.5}

As already mentioned in Subsections~\ref{s1.5} and \ref{s1.7}, we
have the following natural bases in 
$\mathbf{H}=\mathbf{Gr}({}^\mathbb{Z}\mathcal{O}_0)$:

{\bf The standard basis} $\{H_w\,:\,w\in W\}$. This corresponds
to the classes of Verma modules $\{[\Delta_w]\,:\,w\in W\}$.

{\bf The Kazhdan Lusztig basis} $\{\underline{H}_w\,:\,w\in W\}$. 
This corresponds to the classes of indecomposable projective 
modules $\{[P_w]\,:\,w\in W\}$. All entries of the transformation 
matrix from the KL basis to the standard basis are polynomials
with non-negative coefficients.

{\bf The dual Kazhdan-Lusztig basis} $\{\underline{\hat{H}}_w\,:\,w\in W\}$. 
This corresponds to the classes of simple modules $\{[L_w]\,:\,w\in W\}$.
Note that simple modules are right homologically dual to projective modules,
that is, for all $i,j\in\mathbb{Z}$ and all $u,w\in W$, we have:
\begin{displaymath}
\mathrm{Ext}^{i}(P_u,L_w\langle j\rangle) =
\begin{cases}
\mathbb{C},& i=j=0\text{ and }u=w;\\
0,& \text{else}. 
\end{cases}
\end{displaymath}
All entries of the transformation  matrix from the standard basis 
to the dual KL basis are polynomials
with non-negative coefficients.

Of course, we also have:

{\bf The costandard basis} which corresponds
to the classes of dual Verma modules, that is, $\{[\nabla_w]\,:\,w\in W\}$.
Note that dual Verma modules are right homologically dual to Verma modules,
that is, for all $i,j\in\mathbb{Z}$ and all $u,w\in W$, we have:
\begin{displaymath}
\mathrm{Ext}^{i}(\Delta_u,\nabla_w\langle j\rangle) =
\begin{cases}
\mathbb{C},& i=j=0\text{ and }u=w;\\
0,& \text{else}. 
\end{cases}
\end{displaymath}

{\bf The injective Kazhdan-Lusztig  basis} which corresponds
to the classes of indecomposable injective modules $\{[I_w]\,:\,w\in W\}$.
Note that injective modules are right homologically dual to simple modules,
that is, for all $i,j\in\mathbb{Z}$ and all $u,w\in W$, we have:
\begin{displaymath}
\mathrm{Ext}^{i}(L_u,I_w\langle j\rangle) =
\begin{cases}
\mathbb{C},& i=j=0\text{ and }u=w;\\
0,& \text{else}. 
\end{cases}
\end{displaymath}

{\bf The twisted Kazhdan-Lusztig  basis} which corresponds
to the classes of indecomposable tilting modules $\{[T_w]\,:\,w\in W\}$.
As all tilting modules have Verma flags, all entries of the transformation 
matrix from the twisted KL basis to the standard basis are polynomials
with non-negative coefficients.

Now, given $J\subset \pi$, Theorem~\ref{thm-main-1} implies existence
of the following additional bases:

{\bf The hybrid basis} $\{\underline{H}_uH_w\,:\,u\in W(J),\,w\in {}_JW\}$
which corresponds to the classes of standard modules in our
stratification, that is, the following classes:
\begin{displaymath}
\{[\mathrm{Ind}_{\eta_w}
(P_\mathfrak{a}(u\cdot \mu_w))]\,:\, u\in W(J),\,w\in {}_JW\}.
\end{displaymath}
As all projective modules have a standard filtration,
all entries of the transformation  matrix from the KL basis to 
the hybrid basis are polynomials with non-negative coefficients.

{\bf The proper hybrid basis} 
$\{\underline{\hat{H}}_uH_w\,:\,u\in W(J),\,w\in {}_JW\}$
which corresponds to the classes of proper standard modules in our
stratification, that is, the following classes:
$\{[\mathrm{Ind}_{\eta_w}
(L_\mathfrak{a}(u\cdot \mu_w))]\,:\, u\in W(J),\,w\in {}_JW\}$.
As all standard modules have a proper standard filtration,
all entries of the transformation  matrix from the hybrid basis to 
the proper hybrid basis are polynomials with non-negative coefficients.

{\bf The dual hybrid basis} which corresponds to the classes
of costandard modules in our stratification, that is, the
classes $\{[\mathrm{Ind}_{\eta_w}
(P_\mathfrak{a}(u\cdot \mu_w))^\star]\,:\, u\in W(J),\,w\in {}_JW\}$.
Note that the costandard modules are right homologically dual to 
proper standard modules,
that is, for all $i,j\in\mathbb{Z}$ and all $u_1,u_2\in W(J)$
and $w_1,w_2\in {}_JW$, we have:

\resizebox{\textwidth}{!}{
$
\mathrm{Ext}^{i}(\mathrm{Ind}_{\eta_{w_1}}
(L_\mathfrak{a}(u_1\cdot \mu_{w_1})),
\mathrm{Ind}_{\eta_{w_2}}
(P_\mathfrak{a}(u_2\cdot \mu_{w_2}))^\star\langle j\rangle) =
\begin{cases}
\mathbb{C},& i=j=0,\,u_1=u_2,\,w_1=w_2;\\
0,& \text{else}. 
\end{cases}
$
}

{\bf The dual proper hybrid basis} which corresponds to the classes
of proper costandard modules in our stratification, that is, 
the classes 
\begin{displaymath}
\{[\mathrm{Ind}_{\eta_w}
(L_\mathfrak{a}(u\cdot \mu_w))^\star]\,:\, u\in W(J),\,w\in {}_JW\}. 
\end{displaymath}
Note that the proper costandard modules are right homologically dual to 
standard modules, that is, for all $i,j\in\mathbb{Z}$ and all 
$u_1,u_2\in W(J)$ and $w_1,w_2\in {}_JW$, we have:

\resizebox{\textwidth}{!}{
$
\mathrm{Ext}^{i}(\mathrm{Ind}_{\eta_{w_1}}
(P_\mathfrak{a}(u_1\cdot \mu_{w_1})),
\mathrm{Ind}_{\eta_{w_2}}
(L_\mathfrak{a}(u_2\cdot \mu_{w_2}))^\star\langle j\rangle) =
\begin{cases}
\mathbb{C},& i=j=0,\,u_1=u_2,\,w_1=w_2;\\
0,& \text{else}. 
\end{cases}
$
}

Finally, we note that there are also the other side versions of
all the latter bases as will be discussed in the next section.

\begin{example}\label{sl3-ebases}
{\rm
Consider $\mathcal{O}_{0}(\mathfrak{sl}_3)$. Then
$W=\{e,s,t,st,ts,w_0=sts=tst\}$ and we have 
\begin{gather*}
\underline{H}_e=H_e,\quad  
\underline{H}_s=H_s+vH_e,\quad  
\underline{H}_t=H_t+vH_e,\\  
\underline{H}_{st}=H_{st}+vH_s+vH_t+v^2H_e,\quad  
\underline{H}_{ts}=H_{t}+vH_s+vH_t+v^2H_e,\\  
\underline{H}_{w_0}=H_{w_0}+vH_{st}+vH_{ts}+v^2H_s+v^2H_t+v^3H_e.  
\end{gather*}
So, the transformation matrix between the standard and the KL bases is:
\begin{displaymath}
T_1:=\left(
\begin{array}{cccccc}
1&v&v&v^2&v^2&v^3\\ 
0&1&0&v&v&v^2\\ 
0&0&1&v&v&v^2\\ 
0&0&0&1&0&v\\ 
0&0&0&0&1&v\\ 
0&0&0&0&0&1
\end{array}
\right) 
\end{displaymath}
Let $W(J)=\{e,s\}$, then ${}_JW=\{e,t,ts\}$ and hence the corresponding
hybrid basis consists of the following elements:
\begin{gather*}
\underline{H}_eH_e=H_e,\quad
\underline{H}_sH_e=H_s+vH_e,\quad
\underline{H}_eH_t=H_t, \\
\underline{H}_sH_t=H_{st}+vH_t, \quad
\underline{H}_eH_{ts}=H_{ts}, \quad
\underline{H}_sH_{ts}=H_{w_0}+vH_{ts}. 
\end{gather*}
the transformation matrices between the standard and the
hybrid basis and between the hybrid basis and the 
KL basis, respectively, are:
\begin{displaymath}
T_2:=\left(
\begin{array}{cccccc}
1&v&0&0&0&0\\ 
0&1&0&0&0&0\\ 
0&0&1&v&0&0\\ 
0&0&0&1&0&0\\ 
0&0&0&0&1&v\\ 
0&0&0&0&0&1
\end{array}
\right) 
\quad\text{ and }\quad
T_3:= \left(\begin{array}{cccccc}
1&0&v&0&0&0\\ 
0&1&0&v&v&v^2\\ 
0&0&1&0&v&0\\ 
0&0&0&1&0&v\\ 
0&0&0&0&1&0\\ 
0&0&0&0&0&1
\end{array}
\right) 
\end{displaymath}
It is easy to check that $T_1=T_2T_3$.
\hfill$\blacksquare$
}
\end{example}

\section{The other side}\label{s4}

\subsection{Motivation}\label{s4.1}

There is an apparent asymmetry in the elements 
$\underline{H}_uH_w$ discussed in Proposition~\ref{prop-KL-indproj}.
Here we see that an element of the KL basis is on the left while
an element of the standard basis is on the right. Of course, 
the sides could be swapped as follows: For $J\subset\pi$,
consider the elements of the form 
$H_w\underline{H}_u$, where $w\in W_J$ and $u\in W(J)$.
These form the other version of the {\em hybrid KL basis}.

Moreover, since the action of projective functors on 
$\mathcal{O}_0$ is a right action, the elements
$H_w\underline{H}_u$ have a very clear counterparts in
$\mathcal{O}_0$, namely, $[\theta_u \Delta_w]= H_w\underline{H}_u$.
The elements $\theta_u \Delta_w$ give rise to the 
indecomposable projective objects in the 
Serre subquotients of $\mathcal{O}_0$ discussed in
\cite[Subsection~6.3]{CMZ}.

This naturally leads to the following question: do projectives
in $\mathcal{O}_0$ have a filtration with subquotients 
isomorphic to such $\theta_u \Delta_w$? We will answer this
question positively in this section. Our approach would be to
reduce the question to the one we already solved in the 
previous section, that is, for the other side. This reduction
is by no means trivial as our main tool, namely, parabolic induction,
is no longer available. The underlying observation for 
this non-triviality is the combination of the fact that 
the KL combinatorics is invariant under the involution 
$x\mapsto x^{-1}$ on $W$ with the observation in
\cite[Remark~1.2]{MS2} that this is not categorifiable, that is,
there is no functorial involutive equivalence on $\mathcal{O}_0$ 
which maps  $L_x$ to $L_{x^{-1}}$.

\subsection{The result}\label{s4.2}

We fix a parabolic subalgebra $\mathfrak{p}$ of $\mathfrak{g}$
as in Section~\ref{s2}. Given $x,y\in W$, write
$x=ab$ and $y=a'b'$, where $a,a'\in W_J$ and $b,b'\in W(J)$.
Set $x{\preceq_\mathfrak{p}}y$ if and only if $a\preceq a'$.
The Subword Property for $W$ ensures that ${\preceq_\mathfrak{p}}$ 
is a refinement of $\preceq$ in the sense that 
$\preceq\subset {\preceq_\mathfrak{p}}$. Our second main result
is the following: 

\begin{theorem}\label{thm-main2}
We have: 

\begin{enumerate}[$($a$)$]
\item\label{thm-main2.1} The standard modules with respect 
to ${\preceq_\mathfrak{p}}$
are exactly the modules of the form 
$\theta_u \Delta_w$ as in Subsection~\ref{s4.1}.
\item\label{thm-main2.2}
The pair $(A,{\preceq_\mathfrak{p}})$ is a stratified algebra.
\end{enumerate}
\end{theorem}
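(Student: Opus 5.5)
The plan is to transport the whole picture from Section~\ref{s3} to the other side, using the Koszul--Ringel type self-dualities of $\mathcal{O}_0$ together with the fact that $\theta_u$ acts on the right. The shortcut of taking a new parabolic datum on the left for $x\mapsto x^{-1}$ is not available: by \cite[Remark~1.2]{MS2} there is no equivalence realizing $x\mapsto x^{-1}$. So I will argue directly, and use \cite{So86} only to transfer proper standard modules.

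\textbf{Part (a).} Take $w\in W_J$ and $u\in W(J)$.

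First I show that $\theta_u\Delta_w$ has simple top $L_{wu}$ and is a quotient of $P_{wu}$. By adjunction, $\mathrm{Hom}(\theta_u\Delta_w,L_z)=\mathrm{Hom}(\Delta_w,\theta_{u^{-1}}L_z)$. The module $\theta_{u^{-1}}L_z$ is either zero or has simple socle and top, and $\theta_u\Delta_w$ is a direct summand of $\theta_{s_1}\cdots\theta_{s_k}\Delta_w$. The cleaner route is the known identity $\theta_u\Delta_w\cong\theta_u\theta_w\Delta_e$ restricted appropriately. Concretely, $\theta_u\Delta_w$ has a Verma flag with subquotients $\Delta_{wy}$, $y\le u$ in $W(J)$, each appearing once up to shift since $\ell(wy)=\ell(w)+\ell(y)$. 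Its top $\Delta_{wu}$ therefore gives the simple top $L_{wu}$, because $\theta_u\Delta_w$ is indecomposable with a unique $\Delta$ of maximal index.

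Second, I compare it with $P_{wu}$. All composition factors $L_z$ of $\theta_u\Delta_w$ satisfy $z\ge w$ in Bruhat order, so $w\preceq_\mathfrak{p}z$ holds automatically. I then need that $\theta_u\Delta_w$ is exactly the quotient of $P_{wu}$ by the trace of all $P_z$ with $z\notin wW(J)$, $z\succ_\mathfrak{p}$. As in the proof of Proposition~\ref{prop-std}, I compare Verma multiplicities via BGG reciprocity. Here $[P_{wu}:\Delta_{wy}]=[\Delta_{wy}:L_{wu}]$ and $[\theta_u\Delta_w:\Delta_{wy}]=p_{y,u}$ (graded), so it suffices to prove the identity $[\Delta_{wy}:L_{wu}]=p_{y,u}=[\Delta_y:L_u]$ for $w\in W_J$ and $y,u\in W(J)$. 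This is the statement that $P_{wu}$ restricted to the coset $wW(J)$ looks like $P_u$ in the parabolic setting. On the left side this followed from parabolic induction. Here I obtain it combinatorially from the inverse-invariance of KL polynomials, $p_{y,x}=p_{y^{-1},x^{-1}}$, applied to the left-coset identity already proved in \eqref{eq:form751}. Since the quotient of $P_{wu}$ by that trace has a Verma flag with exactly these multiplicities, and $\theta_u\Delta_w$ is a quotient of it with the same multiplicities, the two coincide.

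\textbf{Part (b).} The proper standard modules are $\theta_u\Delta_w$ modulo the trace of $P_{wy}$ in its radical. I expect these to be the images of the proper standard modules of Corollary~\ref{cor-pr-std}, and to identify them I will use Soergel's equivalence \cite{So86} between the relevant Serre subquotients of $\mathcal{O}_0$ for $J$ on the left and on the right (cf. \cite[Subsections~6.2, 6.3]{CMZ}). The stratification property then reduces to showing that every $\Delta_x$ has a filtration by proper standard modules; the filtration of the projectives by standards then follows via \cite[Theorem~1]{Fr} and $\star$, exactly as in Theorem~\ref{thm-main-1}. For $\Delta_{wy}$, I filter it by the proper standards associated to $w$ and to larger cosets. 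To do this I use the characterization of such filtrations through the vanishing $\mathrm{Ext}^1(-,\overline{\nabla})$, or equivalently through the Grothendieck-group count, and I reduce to the coset $wW(J)$ via the equivalence from \cite{CMZ}.

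I expect the main obstacle to be this last step, constructing proper standard filtrations of Verma modules without parabolic induction. If the \cite{CMZ}/\cite{So86} transfer does not yield the filtration directly, my fallback is an inductive argument on $\preceq_\mathfrak{p}$. At each step I split off the top stratum using the trace filtration and check the Ext-vanishing against the transferred proper costandard modules.
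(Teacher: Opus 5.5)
There is a genuine gap in part~(b), which is the heart of the theorem. You correctly reduce stratification to showing that every Verma module $\Delta_x$ has a filtration by the proper standard modules for $\preceq_{\mathfrak p}$; the rest then follows from \cite[Theorem~1]{Fr} and $\star$, as in the paper. None of your proposed tools produces that filtration, however.
An Ext-vanishing criterion for $\overline{\Delta}$-filtrations is a consequence of the stratified structure you are trying to prove. Applied to projective modules, which satisfy every such vanishing, it already contains your conclusion, so it cannot be used here. Besides, vanishing of $\mathrm{Ext}^1(-,\overline{\nabla})$ tests $\Delta$-filtrations, not $\overline{\Delta}$-filtrations.
A Grothendieck group count never yields a filtration.
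The equivalence of \cite[Theorem~37]{CMZ} lives on the Serre subquotient $\mathcal{B}_w$ of a single coset. A filtration of $\Delta_{wy}$ must glue proper standard modules attached to several cosets, which no single subquotient sees. For the same reason, your fallback induction has no mechanism to show that the kernel of $\Delta_x\tto\overline{\Delta}_x$ is again $\overline{\Delta}$-filtered.
Also, \cite{So86} is not an equivalence of left and right Serre subquotients. It is the self-equivalence $\Phi$ of $\mathcal{O}_0^\dagger$ from Subsection~\ref{s4.3}.

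The missing idea is to transport the filtrations themselves by $\Phi$, not only the proper standard modules. Since $\mathcal{O}_0^\dagger$ consists of the modules annihilated by $\mathtt{m}$, it contains every Verma module together with all its submodules and subquotients. $\Phi$ is exact on it and sends $\Delta_{x^{-1}}$ to $\Delta_x$. Exactness of parabolic induction and Lemma~\ref{lem-s2.2-2} give a filtration of $\Delta_{x^{-1}}$ with subquotients $\mathrm{Ind}_{\eta_{w^{-1}}}(L_{\mathfrak a}(u^{-1}\cdot\mu_{w^{-1}}))$. Under $\Phi$ this becomes a filtration of $\Delta_x$ with subquotients $Q(w,u)$ (Lemma~\ref{lem-s4.4-2}).
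These $Q(w,u)$ are the proper standard modules for $\preceq_{\mathfrak p}$ (Lemma~\ref{lem-s4.4-1}), for three reasons. The proper standard modules for $\preceq^{\mathfrak p}$ are the largest quotients of Verma modules with simple top and all other composition factors in strictly lower strata. $\Phi$ preserves this description while $x\mapsto x^{-1}$ interchanges $\preceq^{\mathfrak p}$ and $\preceq_{\mathfrak p}$. Finally, the proper standard quotient of $P_{wu}$ factors through $\Delta_{wu}$.

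In part~(a), the multiplicity comparison is sound and is essentially the paper's argument in a more direct form. The paper matches, under $x\mapsto x^{-1}$, the Verma flag of $\ker(P_{wu}\tto\theta_u\Delta_w)$ with that of the kernel of $P_{(wu)^{-1}}$ onto the left-hand standard module. You instead check $p_{wy,wu}=p_{y,u}$ on $wW(J)$ via \eqref{eq:form751} and inverse invariance.

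Your proof that $\theta_u\Delta_w$ has simple top $L_{wu}$ does not work, though. An indecomposable module with a Verma flag, in which one Verma module is the unique Bruhat-maximal subquotient, need not have simple top. For example, for $\mathfrak{sl}_3$ with $K=\ker(P_{st}\tto\Delta_{st})$, push out $0\to K\to P_{st}\to\Delta_{st}\to 0$ along $K\tto\Delta_s\hookrightarrow\Delta_e$. This gives a non-split extension of $\Delta_{st}$ by $\Delta_e$ with top $L_{st}\oplus L_e$.
Moreover, $\Delta_{wy}$ occurs with graded multiplicity $p_{y,u}$, which need not be a monomial. And $\theta_u\theta_w\Delta_e\cong\theta_uP_w$ is projective, not $\theta_u\Delta_w$.
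The paper takes the simple top from \cite[Theorem~37]{CMZ}. You need that or a real substitute, because without a surjection $P_{wu}\tto\theta_u\Delta_w$ the multiplicity count proves nothing.

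Two smaller points. First, the composition factors $L_z$ of $\theta_u\Delta_w$ satisfy $z\preceq_{\mathfrak p}wu$ (Bruhat $z\geq w$ means $z\preceq w$), not $w\preceq_{\mathfrak p}z$. Second, the claim that $P_{wu}$ modulo the trace has a Verma flag with exactly the $wW(J)$-multiplicities needs one more remark. The set $\{z: z\not\preceq_{\mathfrak p}wu\}$ is upward closed for $\preceq$, so that trace is the corresponding bottom piece of a Verma flag of $P_{wu}$.
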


\subsection{Harish-Chandra bimodules}\label{s4.3}

In order to prove Theorem~\ref{thm-main2}, we need to recall
one more technical tool, namely, that of Harish-Chandra 
$\mathfrak{g}$-$\mathfrak{g}$-bimodules. We refer to 
\cite{BG,Ja} for details.

A $\mathfrak{g}$-$\mathfrak{g}$-bimodule $X$ is called a
{\em Harish-Chandra bimodule} if it is finitely generated
and the adjoint action of $\mathfrak{g}$ on it is locally finite
and has finite multiplicities. 

Let $Z(\mathfrak{g})$ be the center of $U(\mathfrak{g})$
and $\mathtt{m}$ be the maximal ideal in $Z(\mathfrak{g})$
given by the kernel of the action of $Z(\mathfrak{g})$ on 
the trivial $\mathfrak{g}$-module. We denote by $\mathcal{H}$
the category of all Harish-Chandra $\mathfrak{g}$-$\mathfrak{g}$-bimodules.
As usual, we consider the full subcategory 
${}^{\infty}_{\,\,0}\mathcal{H}^1_0$ of 
$\mathcal{H}$ which consists of all bimodules the right action of
$\mathtt{m}$ on which is zero and the left action of 
$\mathtt{m}$ on which is locally nilpotent.
We also consider the full subcategory 
${}^{1}_{0}\mathcal{H}^1_0$ of 
${}^{\infty}_{\,\,0}\mathcal{H}^1_0$ which consists of all 
bimodules the left action of 
$\mathtt{m}$ on which is zero. Note that 
${}^{1}_{0}\mathcal{H}^1_0$ is a full subcategory of 
${}^{\infty}_{\,\,0}\mathcal{H}^1_0$, by definition.

Mapping $X$ to $\displaystyle X\bigotimes_{U(\mathfrak{g})}\Delta_e$
defines an equivalence between ${}^{\infty}_{\,\,0}\mathcal{H}^1_0$
and $\mathcal{O}_0$, see \cite[Theorem~5.9]{BG}. The image of
${}^{1}_{0}\mathcal{H}^1_0$ under this equivalence is the full
subcategory ${\mathcal{O}}_0^\dagger$ of $\mathcal{O}_0$ consisting
of all modules killed by $\mathtt{m}$ (in other words, the action of
the center on which is scalar, that is, such modules have a central character).
Note that ${\mathcal{O}}_0^\dagger$ contains all simple modules
and all Verma modules.

By \cite[Theoreme~1]{So86}, swapping the sides of the bimodule
induces a self-equivalence of ${}^{1}_{0}\mathcal{H}^1_0$ which
translates into a self equivalence $\Phi$ of ${\mathcal{O}}_0^\dagger$
sending $L_x$ to $L_{x^{-1}}$ and $\Delta_x$ to $\Delta_{x^{-1}}$,
for $x\in W$. 

\subsection{Proof of Theorem~\ref{thm-main2}}\label{s4.4}

For $w\in W_J$ and $u\in W(J)$, define 
\begin{displaymath}
Q(w,u):=
\Phi(\mathrm{Ind}_{\eta_{w^{-1}}}(L_\mathfrak{a}(u^{-1}\cdot \mu_{w^{-1}}))).
\end{displaymath}

\begin{lemma}\label{lem-s4.4-1}
The modules $Q(w,u)$ above are the proper standard modules with respect to
$\preceq_\mathfrak{p}$.
\end{lemma}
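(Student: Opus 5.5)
The plan is to characterize proper standard modules intrinsically, in terms of composition factors only, and then transport this characterization through the equivalence $\Phi$ of Subsection~\ref{s4.3}.

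\textbf{Step 1: intrinsic characterization.} For a stratified structure given by a pre-order $\preceq'$ on $W$, the proper standard module $\overline{\Delta}'_x$ is the largest quotient $N$ of $P_x$ such that every composition factor $L_y$ of $\mathrm{rad}\,N$ satisfies $y\prec' x$ strictly. This follows directly from the definition in Subsection~\ref{s3.1}: first factor out the trace of all $P_y$ with $y\not\preceq' x$, then factor out the trace of all $P_y$ with $y\sim' x$ in the radical.

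\textbf{Step 2: proper standard modules have a central character.} I claim every such $N$ lies in $\mathcal{O}_0^\dagger$. Take $z\in\mathtt{m}$. Then $z$ acts on $N$ as an endomorphism, so $zN$ is a homomorphic image of $N$. Moreover, $z$ acts nilpotently on $P_x$, since it gives an element of the radical of $\mathrm{End}(P_x)$, and hence $zN\subset \mathrm{rad}\,N$. If $zN\neq 0$, then $zN$ has top $L_x$, so $L_x$ is a composition factor of $\mathrm{rad}\,N$. This contradicts Step~1, because $x\not\prec' x$. Therefore $\mathtt{m}N=0$.

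Consequently, inside the category $\mathcal{O}_0^\dagger$, the module $\overline{\Delta}'_x$ is the largest quotient of $P_x/\mathtt{m}P_x$ with the property of Step~1. Here $P_x/\mathtt{m}P_x$ is the projective cover of $L_x$ in $\mathcal{O}_0^\dagger$, since this category is closed under quotients. This description refers only to the abelian category $\mathcal{O}_0^\dagger$ and to the labels of its simple objects. By Corollary~\ref{cor-pr-std}, the same applies to $\preceq^\mathfrak{p}$, whose proper standard modules are the modules $\mathrm{Ind}_{\eta_w}(L_\mathfrak{a}(u\cdot\mu_w))$. These indeed lie in $\mathcal{O}_0^\dagger$, as they are quotients of Verma modules.

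\textbf{Step 3: combinatorics of inversion.} For $w\in W_J$ and $u\in W(J)$, we have $(wu)^{-1}=u^{-1}w^{-1}$ with $u^{-1}\in W(J)$ and $w^{-1}\in{}_JW$. Since the Bruhat order is invariant under inversion, we get $y\preceq_\mathfrak{p}x$ if and only if $y^{-1}\preceq^\mathfrak{p}x^{-1}$, with the same equivalence for the strict relation and for $\sim$.

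The equivalence $\Phi$ sends $L_y$ to $L_{y^{-1}}$. Hence it sends the largest quotient of the projective cover of $L_{x^{-1}}$ in $\mathcal{O}_0^\dagger$ satisfying the $\preceq^\mathfrak{p}$-condition of Step~1 to the largest quotient of the projective cover of $L_x$ satisfying the $\preceq_\mathfrak{p}$-condition. Therefore $Q(w,u)=\Phi(\overline{\Delta}^\mathfrak{p}_{u^{-1}w^{-1}})$ is the proper standard module for $\preceq_\mathfrak{p}$ indexed by $wu$. As $(w,u)$ runs over $W_J\times W(J)$, we obtain all of them.

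\textbf{Expected obstacle.} The delicate point is Step~2 together with the claim that "largest quotient with a given composition-factor property" is genuinely intrinsic to $\mathcal{O}_0^\dagger$. This requires checking that $\mathcal{O}_0^\dagger$ is closed under quotients, so that the relevant projective covers and "largest quotients" exist there and are preserved by the equivalence $\Phi$. The remaining steps are formal.
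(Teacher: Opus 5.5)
Your proof is correct, but it takes a different route from the paper's.

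\textbf{The paper's route.} The paper works with Verma modules throughout. It notes that $\mathrm{Ind}_{\eta_{w^{-1}}}(L_\mathfrak{a}(u^{-1}\cdot\mu_{w^{-1}}))$ is the maximal quotient of $\Delta_{(wu)^{-1}}$ whose radical has only $\preceq^\mathfrak{p}$-strictly smaller composition factors. It transports this through $\Phi$ using $\Phi(\Delta_x)\cong\Delta_{x^{-1}}$. It then shows that the proper standard quotient of $P_{wu}$ factors through $\Delta_{wu}$. The reason is that the kernel of $P_{wu}\tto\Delta_{wu}$ has a Verma flag with subquotients $\Delta_z$, $z<wu$. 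So its top consists of $L_z$ with $wu\preceq_\mathfrak{p}z$, and it cannot map nontrivially into the radical of a proper standard module.

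\textbf{Your route.} You replace this Verma-flag step by the central character argument of your Step~2: any quotient $N$ of $P_x$ with $[N:L_x]=1$ is killed by $\mathtt{m}$. The proper standard module then becomes the maximal admissible quotient of $P_x/\mathtt{m}P_x$. This is the projective cover of $L_x$ in $\mathcal{O}_0^\dagger$, and any self-equivalence of $\mathcal{O}_0^\dagger$ preserves it.

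\textbf{What each buys.} Your version is more formal. It needs only $\Phi(L_x)\cong L_{x^{-1}}$ and the compatibility of the two pre-orders under inversion. It uses neither BGG reciprocity nor the fact that $\preceq_\mathfrak{p}$ refines the opposite Bruhat order. The paper's version gives, as a by-product, that $Q(w,u)$ is a quotient of $\Delta_{wu}$. It relies on the compatibility of $\Phi$ with Verma modules, which is needed anyway in Lemma~\ref{lem-s4.4-2}.

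The obstacle you flag is harmless. $\mathcal{O}_0^\dagger$ is defined by the condition $\mathtt{m}M=0$, so it is automatically closed under submodules and quotients.
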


\begin{proof}
We know from Corollary~\ref{cor-pr-std} that the modules 
$\mathrm{Ind}_{\eta_{w^{-1}}}(L_\mathfrak{a}(u^{-1}\cdot \mu_{w^{-1}}))$ 
are exactly the proper standard modules with respect to the pre-order
$\preceq^\mathfrak{p}$. Each such module
$\mathrm{Ind}_{\eta_{w^{-1}}}(L_\mathfrak{a}(u^{-1}\cdot \mu_{w^{-1}}))$
is a quotient of $\Delta_{(wu)^{-1}}$, it has simple top
$L_{(wu)^{-1}}$ and all other 
composition subquotients are strictly smaller
with respect to $\preceq^\mathfrak{p}$. Moreover,
$\mathrm{Ind}_{\eta_{w^{-1}}}(L_\mathfrak{a}(u^{-1}\cdot \mu_{w^{-1}}))$
is the maximal quotient of $\Delta_{(wu)^{-1}}$ with these
properties.

The mapping $x\mapsto x^{-1}$, which underlines the combinatorics of 
$\Phi$, is an isomorphism between $\preceq^\mathfrak{p}$ with
$\preceq_\mathfrak{p}$. Consequently, $Q(w,u)$ is a quotient of
$\Delta_{wu}$, it has simple top $L_{wu}$ and all other 
composition subquotients of $Q(w,u)$ are strictly smaller
with respect to $\preceq_\mathfrak{p}$. Also, $Q(w,u)$ is
the maximal quotient of $\Delta_{wu}$ with these properties.

The module $P_{wu}$ surjects onto $\Delta_{wu}$ and the kernel has
a filtration by $\Delta_z$ with $z\leq wu$, in particular, we have
$wu\preceq_\mathfrak{p}z$. This implies that the proper costandard
quotient of $P_{wu}$ must be a quotient of $\Delta_{wu}$ and hence
coincides with $Q(w,u)$ by the previous paragraph.
\end{proof}

\begin{lemma}\label{lem-s4.4-2}
All projective modules in $\mathcal{O}_0$ have a filtration
with subquotients isomorphic to the modules $Q(w,u)$ as above.
\end{lemma}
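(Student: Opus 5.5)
The plan is to reduce to Verma modules, as in the proof of Theorem~\ref{thm-main-1}. Every projective module in $\mathcal{O}_0$ has a Verma flag, so it is enough to show that each Verma module $\Delta_x$, for $x\in W$, has a filtration whose subquotients are of the form $Q(w,u)$.

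The key observation is that the whole problem can be moved inside ${\mathcal{O}}_0^\dagger$.
\begin{itemize}
\item Verma modules lie in ${\mathcal{O}}_0^\dagger$.
\item ${\mathcal{O}}_0^\dagger$ is the full subcategory of modules annihilated by $\mathtt{m}$, so it is closed under submodules and quotients.
\item Hence any filtration of a Verma module, together with all its subquotients, stays inside ${\mathcal{O}}_0^\dagger$.
\end{itemize}
On this subcategory we have the self-equivalence $\Phi$ from Subsection~\ref{s4.3}, and $\Phi$ sends $\Delta_{y}$ to $\Delta_{y^{-1}}$.

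The steps are as follows.

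Fix $x\in W$. Write $x^{-1}=u'w'$ with $u'\in W(J)$ and $w'\in{}_JW$. As in the proof of Theorem~\ref{thm-main-1}, Lemma~\ref{lem-s2.2-2} gives
\[
\Delta_{x^{-1}}\cong\mathrm{Ind}_{\eta_{w'}}(\Delta_\mathfrak{a}(u'\cdot\mu_{w'})).
\]
Take a composition series of $\Delta_\mathfrak{a}(u'\cdot\mu_{w'})$. By exactness of parabolic induction, it yields a filtration of $\Delta_{x^{-1}}$ with subquotients $\mathrm{Ind}_{\eta_{w'}}(L_\mathfrak{a}(r\cdot\mu_{w'}))$, where $r\in W(J)$.

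Every term of this filtration is a submodule of a Verma module, so it lies in ${\mathcal{O}}_0^\dagger$. Applying the equivalence $\Phi$ (which is exact on this abelian subcategory) therefore gives a filtration of $\Phi(\Delta_{x^{-1}})\cong\Delta_x$. Its subquotients are
\[
\Phi\bigl(\mathrm{Ind}_{\eta_{w'}}(L_\mathfrak{a}(r\cdot\mu_{w'}))\bigr).
\]
Set $w:=w'^{-1}\in W_J$ and $u:=r^{-1}\in W(J)$. Then this subquotient is exactly $Q(w,u)$ by definition. Splicing these filtrations into a Verma flag of an arbitrary projective module proves the lemma.

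The main point needing care is that $\Phi$ really preserves short exact sequences and submodule inclusions. The inclusions $\mathcal{H}\supset{}^{1}_{0}\mathcal{H}^1_0$ and ${\mathcal{O}}_0\supset{\mathcal{O}}_0^\dagger$ are Serre-closed under subobjects and quotients. So an equivalence of these abelian categories is automatically exact, and kernels and cokernels computed in ${\mathcal{O}}_0^\dagger$ agree with those in $\mathcal{O}_0$.

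A minor remaining check is that the indexing matches. The elements $w'^{-1}$ with $w'\in{}_JW$ are exactly the elements of $W_J$, and $x=w'^{-1}u'^{-1}$ is the corresponding factorization of $x$. So every subquotient is genuinely one of the $Q(w,u)$ with $w\in W_J$ and $u\in W(J)$.
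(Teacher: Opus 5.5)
Your proposal is correct and follows the paper's own argument. You filter each Verma module $\Delta_{x^{-1}}$ by the induced modules $\mathrm{Ind}_{\eta_{w'}}(L_\mathfrak{a}(r\cdot\mu_{w'}))$ via exactness of parabolic induction, transport this filtration through $\Phi$, and splice the result into a Verma flag of the projective. Your extra care is a welcome addition: $\mathcal{O}_0^\dagger$ is closed under submodules and quotients, so $\Phi$ is exact and the whole filtration stays in its domain, which the paper leaves implicit. One small point of wording: this subcategory is not closed under extensions, so ``Serre-closed'' is not quite the right term, though closure under subquotients is all you actually use.
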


\begin{proof}
By exactness of parabolic induction, we know that all Verma modules
have a filtration with subquotients of the form 
$\mathrm{Ind}_{\eta_{w^{-1}}}(L_\mathfrak{a}(u^{-1}\cdot \mu_{w^{-1}}))$.
Applying $\Phi$, we get that all Verma modules have a filtration 
with subquotients $Q(w,u)$. As all projective in 
$\mathcal{O}_0$ have a filtration with Verma subquotients,
the claim follows.
\end{proof}

Applying $\star$ to the claim of Lemma~\ref{lem-s4.4-2}, we get 
that all injectives in $\mathcal{O}_0$ have a filtration
with subquotients isomorphic to the proper costandard modules 
$Q(w,u)^\star$. Since standard and proper costandard modules
are homologically dual, see \cite[Theorem~1]{Fr}, we obtain
that all projectives in $\mathcal{O}_0$ have a filtration
with standard subquotients. This proves Claim~\eqref{thm-main2.2}
of Theorem~\ref{thm-main2}.

To prove Claim~\eqref{thm-main2.1}
of Theorem~\ref{thm-main2}, we note that, by 
\cite[Theorem~37]{CMZ}, the modules 
$\theta_u\Delta_w$, for $w\in W_J$ and $u\in W(J)$, give rise
to indecomposable projective modules in certain Serre subquotients
of $\mathcal{O}_0$. In particular, each $\theta_u\Delta_w$ has
simple top $L_{wu}$. This means that $\theta_u\Delta_w$
is a quotient of $P_{wu}$. As both $\theta_u\Delta_w$
and $P_{wu}$ have Verma flags, the kernel $K$ of
$P_{wu}\tto \theta_u\Delta_w$ has Verma flag. 

Now let us note that, under the map $x\mapsto x^{-1}$, the
Verma flag combinatorics of $P_{wu}$ and $P_{(wu)^{-1}}$
match (since KL combinatorics is invariant under this map).
This map also matches $H_w\underline{H}_u=[\theta_u\Delta_w]$ with
\begin{displaymath}
\underline{H}_{u^{-1}}h_{w^{-1}}=
[\mathrm{Ind}_{\eta_{w^{-1}}}(P_\mathfrak{a}(u^{-1}\cdot \mu_{w^{-1}}))] 
\end{displaymath}
and hence must match the Verma combinatorics of $K$ with that of 
the kernel $N$ of the projection from $P_{(wu)^{-1}}$ onto
$\mathrm{Ind}_{\eta_{w^{-1}}}(P_\mathfrak{a}(u^{-1}\cdot \mu_{w^{-1}}))$.

But we know from Proposition~\ref{prop-std} that 
$\mathrm{Ind}_{\eta_{w^{-1}}}(P_\mathfrak{a}(u^{-1}\cdot \mu_{w^{-1}}))$
are exactly the standard modules for $\preceq^\mathfrak{p}$. 
In particular, all Verma subquotients appearing in $N$ are 
of the form $\Delta_z$, where $(wu)^{-1}\preceq^\mathfrak{p}z$.
This means that all Verma module appearing in $K$ are 
of the form $\Delta_{z^{-1}}$, where $wu\preceq_\mathfrak{p}z^{-1}$.
Consequently, the standard quotient of $P_{wu}$ must be a quotient
of $\theta_u\Delta_w$. However, since all Verma modules appearing in
$\theta_u\Delta_w$ are of the form $\Delta_{wv}$, where $v\in W(J)$,
we conclude that $\theta_u\Delta_w$ is, in fact, already 
a standard module for $\preceq_\mathfrak{p}$.
This completes the proof of Theorem~\ref{thm-main2}.

\begin{example}
{\rm Consider $\mathfrak{g}=\mathfrak{sl}_3$
with $W=\{e,s,t,st,ts,w_0=sts=tst\}$ and choose
$W(J)=\{e,s\}$. Then $W_J=\{e,t,st\}$. We have
$P_e=\Delta_e$ and $P_s=\theta_s\Delta_e$.

Further, $P_t$ surjects onto $\Delta_t$ with kernel $\Delta_e$.
The module $P_{ts}=\theta_s P_t$ surjects onto
$\theta_s\Delta_t$ with kernel $\theta_s\Delta_e$.
The module $P_{st}$ surjects onto $\Delta_{st}$ with kernel
having a filtration with subquotients $\Delta_t$
and $\theta_s P_t$. Finally, $P_{w_0}$ surjects onto
$\theta_s\Delta_{st}$ with kernel $P_{ts}$.\hfill$\blacksquare$
}
\end{example}

\subsection{Consequences}\label{s4.5}

\begin{corollary}\label{cor-s4.5-1}
The images of the modules
$\theta_u\Delta_w$, 
where $u\in W(J)$ and $w\in W_J$,
in the (graded) Grothendieck group of $\mathcal{O}_0$
form there a basis. 
\end{corollary}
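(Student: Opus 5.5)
The plan is to mimic the proof of Corollary~\ref{cor-s3.3-1}, now using Theorem~\ref{thm-main2}. By Theorem~\ref{thm-main2}\eqref{thm-main2.1}, the modules $\theta_u\Delta_w$, for $u\in W(J)$ and $w\in W_J$, are exactly the standard modules with respect to $\preceq_\mathfrak{p}$. Moreover, $\theta_u\Delta_w$ is the standard quotient of $P_{wu}$, and the map $(w,u)\mapsto wu$ is a bijection from $W_J\times W(J)$ to $W$. So we have exactly $|W|$ such modules, one for each indecomposable projective. It therefore suffices to show that the transition matrix between $\{[P_x]\}$ and $\{[\theta_u\Delta_w]\}$ is unitriangular with respect to a suitable total order on $W$.

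For this we use Theorem~\ref{thm-main2}\eqref{thm-main2.2}. Each $P_x$, with $x=wu$, has a filtration by standard modules. Its top subquotient is $\theta_u\Delta_w$, occurring once. Every other subquotient is a standard module $\theta_{u'}\Delta_{w'}$ coming from the trace of $P_z$ with $z\not\preceq_\mathfrak{p} x$. In an ungraded setting this is, a priori, a problem only inside the equivalence class $x\sim_\mathfrak{p} z$. To avoid it, compare Verma supports instead. The class $[\theta_u\Delta_w]=H_w\underline{H}_u$ equals $H_{wu}$ plus an $\mathbb{A}$-combination of $H_{wv}$ with $v<u$ in $W(J)$. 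This holds because $\ell(wv)=\ell(w)+\ell(v)$ and $\underline{H}_u=H_u+\sum_{v<u}p_{v,u}H_v$.

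Hence, with respect to the Bruhat order, the matrix expressing $\{[\theta_u\Delta_w]\}$ in the standard basis $\{H_x\}$ is unitriangular over $\mathbb{A}$: the term indexed by $wu$ has coefficient $1$ and all others are indexed by strictly smaller elements $wv<wu$. Extend the Bruhat order to any total order. A unitriangular matrix over $\mathbb{A}$ is invertible over $\mathbb{A}$. Since $\{H_x\}$ is an $\mathbb{A}$-basis of $\mathbf{H}\cong\mathbf{Gr}({}^\mathbb{Z}\mathcal{O}_0)$, the classes $[\theta_u\Delta_w]$ form an $\mathbb{A}$-basis of the graded Grothendieck group. Specializing $v\mapsto 1$ along the forgetful functor $\mathbf{F}$ gives the ungraded statement, since $\{[\Delta_x]\}$ is a $\mathbb{Z}$-basis of the Grothendieck group of $\mathcal{O}_0$.

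The only point needing care is the unitriangularity claim, specifically that $H_wH_v=H_{wv}$ for $w\in W_J$ and $v\in W(J)$. This follows from the length additivity $\ell(wv)=\ell(w)+\ell(v)$ for shortest coset representatives. The rest is formal, so I expect no real obstacle. Alternatively, one can transport Corollary~\ref{cor-s3.3-1} along the anti-involution $H_x\mapsto H_{x^{-1}}$ of $\mathbf{H}$, which sends $\underline{H}_{u^{-1}}H_{w^{-1}}$ to $H_w\underline{H}_u$ and preserves bases.
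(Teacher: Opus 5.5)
Your proof is correct, but it takes a different route from the paper. The paper argues ``mutatis mutandis'' as in Corollary~\ref{cor-s3.3-1}. By Theorem~\ref{thm-main2}, each $P_{wu}$ has a standard filtration in which $\theta_u\Delta_w$ occurs once, on top, and all other subquotients come from strictly higher strata. So the transition matrix between $\{[P_x]\}$ and $\{[\theta_u\Delta_w]\}$ is unitriangular, and the basis property is inherited from the projectives.

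You instead compare with the Verma basis. From $[\theta_u\Delta_w]=H_w\underline{H}_u$, the unitriangularity of the KL basis of $W(J)$, and $H_wH_v=H_{wv}$ for $w\in W_J$, $v\in W(J)$, you obtain a unitriangular matrix over $\mathbb{A}$, and then you specialize $v\mapsto 1$. This is pure Hecke algebra combinatorics and never uses Theorem~\ref{thm-main2}, so your first paragraph is not actually needed. Your alternative via the anti-involution $H_x\mapsto H_{x^{-1}}$ is equally valid.

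Your route is independent of the deep categorical input: Soergel's equivalence $\Phi$ and the Serre subquotient equivalences. The paper's route has the advantage that the same filtrations also give the positivity statement of Corollary~\ref{cor-s4.5-2}, which a Verma-basis computation does not see.

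The worry you raise about equivalence classes does not obstruct the paper's argument. The multiplicity of $\theta_{u'}\Delta_{w'}$ in a standard filtration of $P_x$ equals $[Q(w',u')^\star:L_x]$. By Lemma~\ref{lem-s4.4-1}, $Q(w',u')^\star$ has simple socle $L_{w'u'}$, and all its other composition factors are strictly lower with respect to $\preceq_\mathfrak{p}$. Hence, within a stratum, the transition matrix is the identity.
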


\begin{proof}
Mutatis mutandis the proof of Corollary~\ref{cor-s3.3-1}.
\end{proof}

\begin{corollary}\label{cor-s4.5-2}
When expressed in this new basis, the coefficients 
of any $[P]$, where $P\in \mathcal{O}_0$ is projective, 
are given by (polynomials whose coefficients are)
non-negative integers.
\end{corollary}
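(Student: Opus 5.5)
The plan is to copy the proof of Corollary~\ref{cor-s3.3-2}. The statement should follow directly from Theorem~\ref{thm-main2}, combined with Corollary~\ref{cor-s4.5-1}.

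First, take an indecomposable projective $P_x$, with $x\in W$, in its graded lift. By Theorem~\ref{thm-main2}\eqref{thm-main2.2}, $P_x$ has a filtration whose subquotients are standard modules with respect to $\preceq_\mathfrak{p}$. By Theorem~\ref{thm-main2}\eqref{thm-main2.1}, these standard modules are exactly the modules $\theta_u\Delta_w$ with $w\in W_J$ and $u\in W(J)$. The filtration must be taken in ${}^{\mathbb{Z}}\mathcal{O}_0$, so each subquotient is some $\theta_u\Delta_w\langle k\rangle$.

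To justify the graded version, I would use the following facts:
\begin{itemize}
\item $P_x$ is a graded module;
\item the standard module quotient is defined via traces of projectives, which are graded submodules;
\item the filtration from \cite{CPS,Fr} can therefore be built from graded pieces;
\item each $\theta_u\Delta_w$ has the graded lift obtained by applying the graded lift of $\theta_u$ to the graded $\Delta_w$.
\end{itemize}
Indecomposables have graded lifts that are unique up to shift \cite{St}, so every subquotient of the graded filtration is indeed a shift $\theta_u\Delta_w\langle k\rangle$.

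Next, additivity of classes on filtrations gives
\begin{displaymath}
[P_x]=\sum_{u,w} m_{u,w}(v)\,[\theta_u\Delta_w],
\end{displaymath}
where each $m_{u,w}(v)$ is a Laurent polynomial in $v$ whose coefficients count how often $\theta_u\Delta_w\langle k\rangle$ occurs as a subquotient for each $k$. These coefficients are therefore non-negative integers. By Corollary~\ref{cor-s4.5-1}, the classes $[\theta_u\Delta_w]$ form a basis, so this expansion is the unique expression of $[P_x]$ in that basis. For an arbitrary projective $P$, which is a direct sum of shifted $P_x$'s, the claim follows by additivity.

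The only step that needs care is the graded-lift point above, namely that the standard filtration can be taken graded. The ungraded statement, obtained by setting $v=1$, holds without this caveat.
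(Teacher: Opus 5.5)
Your proposal is correct and follows the paper's proof, which is the one-line observation that the claim is a direct consequence of all projectives having a filtration by the standard modules $\theta_u\Delta_w$ from Theorem~\ref{thm-main2}. Your extra remarks on taking this filtration in ${}^{\mathbb{Z}}\mathcal{O}_0$ (graded traces, graded lifts unique up to shift) make explicit the graded bookkeeping that the paper leaves implicit.
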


\begin{proof}
This is a direct consequence of the fact that all
projectives have a filtration by standard modules.
\end{proof}

\begin{corollary}\label{cor-s4.3-57}
Let $\mathfrak{q}\subset\mathfrak{p}\subset\mathfrak{g}$
be two parabolic subalgebras. Then all standard modules with respect to
$\preceq_{\mathfrak{p}}$ have a filtration with subquotients being
standard modules with respect to $\preceq_{\mathfrak{q}}$.
\end{corollary}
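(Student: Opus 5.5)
Let $J\subset K$ be the sets of simple reflections with $\mathfrak{q}=\mathfrak{p}(J)$ and $\mathfrak{p}=\mathfrak{p}(K)$. By Theorem~\ref{thm-main2}\eqref{thm-main2.1}, the standard modules for $\preceq_\mathfrak{p}$ are the $\theta_u\Delta_w$ with $w\in W_K$ and $u\in W(K)$. The standard modules for $\preceq_\mathfrak{q}$ are the $\theta_{u'}\Delta_{w'}$ with $w'\in W_J$ and $u'\in W(J)$. I will not try to transport the parabolic induction argument of Corollary~\ref{cor-s3.3-5} through $\Phi$, since $\Phi$ is only defined on $\mathcal{O}_0^\dagger$ and $\theta_u\Delta_w$ need not lie there. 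Instead I will use that projective functors are exact and compose well.

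First, consider $W(K)$ with its parabolic subgroup $W(J)$, and the Levi algebra $\mathfrak{a}_K$ of $\mathfrak{p}$ with its parabolic subalgebra $\mathfrak{a}_K\cap\mathfrak{q}$. Apply Theorem~\ref{thm-main2}\eqref{thm-main2.2} to the principal block of $\mathcal{O}(\mathfrak{a}_K)$. Its indecomposable projective $P^{\mathfrak{a}_K}_u$, for $u\in W(K)$, then has a filtration whose subquotients are $\theta^{\mathfrak{a}_K}_{b}\Delta^{\mathfrak{a}_K}_{c}$ with $c\in W(K)_J$ and $b\in W(J)$. Combinatorially this says that $\underline{H}_u$ expands in the elements $H_c\underline{H}_b$ with coefficients in $\mathbb{Z}_{\geq 0}[v,v^{-1}]$.

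The key step is to lift this statement to projective functors on $\mathcal{O}_0(\mathfrak{g})$. Since $P^{\mathfrak{a}_K}_u=\theta^{\mathfrak{a}_K}_u\Delta^{\mathfrak{a}_K}_e$, I want a filtration of the functor $\theta_u$ for $u\in W(K)$, evaluated on $\Delta_w$ with $w\in W_K$. Concretely, I claim $\theta_u\Delta_w$ has a filtration with subquotients $\theta_b\Delta_{wc}$, where $c\in W(K)_J$, $b\in W(J)$, and the multiplicities are the same as above. Note that $wc\in W_J$ because $w\in W_K$ and $c\in W(K)_J$, so these subquotients are indeed $\preceq_\mathfrak{q}$-standard.

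To prove the claim, I would use the CMZ Serre subquotient equivalence (\cite[Theorem~37]{CMZ}). The Serre subquotient of $\mathcal{O}_0$ associated with the coset $wW(K)$ is equivalent to $\mathcal{O}_0(\mathfrak{a}_K)$, with $\theta_u\Delta_w$ corresponding to $P^{\mathfrak{a}_K}_u$ and $\Delta_{wc}$ corresponding to $\Delta^{\mathfrak{a}_K}_c$. This equivalence should intertwine $\theta_b$, for $b\in W(K)$, with $\theta^{\mathfrak{a}_K}_b$. Transporting the $\mathfrak{a}_K$-filtration then gives a filtration in the subquotient. To lift it to $\mathcal{O}_0$, I would use that $\theta_u\Delta_w$ is projective in that subquotient, i.e.\ $\theta_u\Delta_w$ is a module in the Serre subcategory $\mathcal{O}_0^{\preceq}$ of modules whose simple subquotients $L_z$ satisfy $wW(K)\preceq_\mathfrak{p}$-below conditions, and it is a projective-relative object there.

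An alternative route avoids the subquotient. Lift the $\mathfrak{a}_K$-filtration of $\theta^{\mathfrak{a}_K}_u\Delta^{\mathfrak{a}_K}_e$ to a filtration of $\theta_u\Delta_e$ inside $\mathcal{O}_0$ with subquotients $\theta_b\Delta_c$. This should work because $\theta_u\Delta_e=P_u$, and the claim for $P_u$ with $u\in W(K)$ follows from Theorem~\ref{thm-main2}\eqref{thm-main2.2} for $\mathfrak{q}$: all Verma subquotients of $P_u$ lie in $W(K)$, and its standard filtration is the required one. Then pass from $\Delta_e$ to $\Delta_w$ by the twisting functor $\top_w$ (or $\top_{w^{-1}}$, depending on conventions). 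This functor commutes with projective functors and sends $\Delta_{c}$ to $\Delta_{wc}$, since $\ell(wc)=\ell(w)+\ell(c)$. It is exact on modules with Verma flags, and all the $\theta_b\Delta_c$ have Verma flags, so it transports the filtration intact.

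The main obstacle is checking that the twist carries $\theta_b\Delta_c$ to $\theta_b\Delta_{wc}$, which needs the length additivity $\ell(wc)=\ell(w)+\ell(c)$ for $w\in W_K$ and $c\in W(K)$, together with exactness of $\top$ on each short exact sequence of Verma-flag modules. I would try the twisting route first and keep the CMZ route as a fallback.
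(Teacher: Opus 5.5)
Your twisting-functor route is correct, and it differs from the paper's proof. The paper argues as in Corollary~\ref{cor-s3.3-5}. It uses the Serre subquotient equivalence of \cite[Theorem~37]{CMZ} to identify $\mathrm{End}(\bigoplus_{u}\theta_u\Delta_w)$ with the endomorphism algebra of a projective generator of $\mathcal{O}_0(\mathfrak{a})$. It notes that this equivalence matches Verma modules, and then applies Theorem~\ref{thm-main2} to the Levi algebra $\mathfrak{a}$ of $\mathfrak{p}$ with its parabolic $\mathfrak{a}\cap\mathfrak{q}$. You instead never leave $\mathcal{O}_0(\mathfrak{g})$, and your steps check out:
\begin{enumerate}
\item Theorem~\ref{thm-main2} for $\mathfrak{q}$ gives a $\preceq_\mathfrak{q}$-standard filtration of $P_u=\theta_u\Delta_e$, for $u\in W(K)$.
\item Each subquotient $\theta_b\Delta_c$ contains $\Delta_{cb}$ in its Verma flag, while $P_u$ only involves $\Delta_z$ with $z\leq u$. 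Hence every occurring $c$ lies in $W(K)\cap W_J$.
\item The functor $\top_w$ is the right one, since $\top_x\Delta_y\cong\Delta_{xy}$ whenever $\ell(xy)=\ell(x)+\ell(y)$ by \cite{AS}. It commutes with projective functors, so it sends $P_u$ to $\theta_u\Delta_w$ and $\theta_b\Delta_c$ to $\theta_b\Delta_{wc}$, with $wc\in W_J$.
\item $\top_w$ preserves each short exact sequence in the filtration, because $\mathcal{L}_1\top_w$ vanishes on modules with Verma flag.
\end{enumerate}

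Your route avoids the CMZ equivalence entirely. In particular, it avoids passing from an isomorphism of endomorphism algebras, or from a filtration in a Serre subquotient, back to actual submodules of $\theta_u\Delta_w$; the paper treats that step only briefly. It also shows that the multiplicity of $\theta_b\Delta_{wc}$ in $\theta_u\Delta_w$ equals that of $\theta_b\Delta_c$ in $P_u$, independently of $w\in W_K$. This is the categorical form of left-multiplying the expansion of $\underline{H}_u$ in the elements $H_c\underline{H}_b$ by $H_w$.

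The paper's route instead transports the whole stratified structure of the Levi, and it runs formally parallel to the other-side Corollary~\ref{cor-s3.3-5}. Your fallback CMZ paragraph is too vague to stand alone as written: the lifting step is unjustified and the description of the Serre subcategory is garbled. You do not need it, though.
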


\begin{proof}
For $w\in W_J$, consider the endomorphism algebra of the direct sum of 
all $\theta_u\Delta_w$, where $u\in W(J)$. Then, by the 
equivalence in \cite[Theorem~37]{CMZ}, 
the endomorphism algebra of this direct sum is isomorphic to 
endomorphism algebra of a basic projective generator for
$\mathcal{O}_0(\mathfrak{a})$. Note that 
$\mathfrak{a}\cap\mathfrak{q}$ is a parabolic subalgebra
of $\mathfrak{a}$. Since the equivalence in \cite[Theorem~37]{CMZ}
matches Verma modules, the claim now follows by applying 
Theorem~\ref{thm-main2} to the algebra $\mathfrak{a}$
and its parabolic subalgebra $\mathfrak{a}\cap\mathfrak{q}$.
\end{proof}

The above statements give a categorical interpretation of 
various results from \cite{GH07,BMS}.

\subsection{Tilting modules}\label{s4.6}

For a simple reflection $s$, denote by $\mathbf{C}_s$
and $\mathbf{K}_s$ the corresponding {\em shuffling}
and {\em coshuffling} endofunctors of $\mathcal{O}_0$,
see \cite{MS2}. Note that 
$\mathbf{C}_s\cong\star\circ\mathbf{K}_s\circ\star$.
By \cite[Proposition~7.1]{J-Z}, these functors satisfy
braid relations and hence, for any element $w\in W$, we can define
the corresponding $\mathbf{C}_w$ and $\mathbf{K}_w$,
uniquely up to isomorphism.

Similarly to Proposition~\ref{prop-tilting1},
we have the following description of the indecomposable
tilting modules with respect to our 
stratified structure given by $\preceq_\mathfrak{p}$:

\begin{proposition}\label{prop-tilting2}
The indecomposable tilting modules with respect to our
stratified structure  given by $\preceq_\mathfrak{p}$ 
are exactly the modules  $\mathbf{K}_{w_0^\mathfrak{p}}T_x$,
where $x\in W$.
\end{proposition}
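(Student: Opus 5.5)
We have to show that each $\mathbf{K}_{w_0^\mathfrak{p}}T_x$ is indecomposable, has a standard filtration and a proper costandard filtration with respect to $\preceq_\mathfrak{p}$, and that these $|W|$ modules are pairwise non-isomorphic. Non-isomorphism and indecomposability come cheaply. On the category $\mathcal{F}(\nabla)$ of modules with dual Verma flag, $\mathbf{K}_{w_0^\mathfrak{p}}$ is exact. Moreover, it is a fully faithful equivalence onto its image, being the $\star$-conjugate of $\mathbf{C}_{w_0^\mathfrak{p}}$, which is an auto-equivalence of $\mathcal{D}^b(\mathcal{O}_0)$ and is acyclic on $\mathcal{F}(\Delta)$. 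Therefore $\mathrm{End}(\mathbf{K}_{w_0^\mathfrak{p}}T_x)\cong \mathrm{End}(T_x)$ is local, and different $x$ give non-isomorphic modules. By counting, it then suffices to prove that each $\mathbf{K}_{w_0^\mathfrak{p}}T_x$ is tilting with respect to $\preceq_\mathfrak{p}$.

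For the proper costandard filtration, write $T_x$ as a module with dual Verma flag. We have $\mathbf{K}_s\nabla_y\cong\nabla_{ys}$ when $ys>y$, and in general $\mathbf{K}_{w_0^\mathfrak{p}}\nabla_y$ is the $\star$-dual of $\mathbf{C}_{w_0^\mathfrak{p}}\Delta_y$. Factor $y=au$ with $a\in W_J$ and $u\in W(J)$. Since shuffling acts on the right, $\mathbf{C}_{w_0^\mathfrak{p}}$ only mixes Verma modules inside the coset $aW(J)$. The point is to identify $\mathbf{K}_{w_0^\mathfrak{p}}\nabla_y$, or at least show that it has a filtration with subquotients $Q(a,v)^\star$ for $v\in W(J)$. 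I would do this through the Serre subquotient equivalence of \cite[Theorem~37]{CMZ}, which reduces the problem inside the stratum $aW(J)$ to $\mathcal{O}_0(\mathfrak{a})$. There the statement becomes classical: for the longest element of the Levi, $\mathbf{K}_{w_0^\mathfrak{a}}$ sends dual Vermas to modules that are ``co-tilting-like''. In particular, $\mathbf{K}_{w_0^\mathfrak{a}}$ applied to the antidominant dual Verma gives a module with simple costandard filtration. Since the strata are ordered compatibly with a $\nabla$-flag of $T_x$ refined by $\preceq_\mathfrak{p}$, exactness of $\mathbf{K}$ on $\mathcal{F}(\nabla)$ assembles these pieces into a proper costandard filtration.

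For the standard filtration, the natural route mimics Proposition~\ref{prop-tilting1}. Write $T_x=\theta_{x'}\Delta_{w_0}$ and commute $\mathbf{K}_{w_0^\mathfrak{p}}$ past $\theta_{x'}$. However, projective functors do not preserve standard filtrations for $\preceq_\mathfrak{p}$, since the stratification is on the wrong side. I would therefore use the auxiliary relation between shuffling and projective functors announced as Lemma~\ref{lem-tilt2-1}. Conjugation by $w_0$ exchanges right and left singular generators, and I expect the lemma to be what lets us rewrite $\mathbf{K}_{w_0^\mathfrak{p}}\theta_{x'}$ as an expression adapted to the left structure, to which $\Phi$ and Section~\ref{s3} can be applied. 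Alternatively, one can avoid the lemma and argue homologically. A module $M$ has a standard filtration if and only if $\mathrm{Ext}^{1}(M,Q(w,u)^\star)=0$ for all proper costandard modules, by \cite[Theorem~1]{Fr}. This Ext vanishing could be checked by adjunction, moving $\mathbf{K}_{w_0^\mathfrak{p}}$ to its adjoint $\mathbf{C}_{w_0^\mathfrak{p}}$ (derived) and using $\nabla$-filtration dimension bounds in the spirit of \cite{Pa,MO}: tilting modules have $\nabla$-dimension zero, and the shuffled proper costandards land in controlled degrees.

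The main obstacle is this standard-filtration half. The action of projective functors is unavailable on this side, so the argument hinges either on Lemma~\ref{lem-tilt2-1} or on a delicate derived adjunction computation. I would first try the homological criterion, because it needs only the proper costandard description from the previous step together with vanishing of $\mathcal{L}_i\mathbf{C}_{w_0^\mathfrak{p}}$ on the relevant modules for $i>0$.
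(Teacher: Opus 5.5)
Your indecomposability and counting argument is fine and is essentially the paper's. The gaps are in both filtration halves.

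For the proper costandard filtration, the reduction to the Serre subquotient $\mathcal{B}_a\simeq\mathcal{O}_0(\mathfrak{a})$ cannot work. That subquotient only sees $\mathbf{K}_{w_0^\mathfrak{p}}\nabla_y$ modulo lower strata. There the $Q(a,v)^\star$ become simple objects, so every object trivially has such a ``filtration''. Whether $\mathbf{K}_{w_0^\mathfrak{p}}\nabla_y$ (or $\mathbf{K}_{w_0^\mathfrak{p}}T_x$) lies in $\mathcal{F}(\overline{\nabla})$ in $\mathcal{O}_0$ is the condition $\mathrm{Ext}^i(\theta_u\Delta_w,-)=0$ for $i>0$ and all $w\in W_J$, $u\in W(J)$. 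This condition is invisible in $\mathcal{B}_a$, and ``co-tilting-like'' does not supply it.

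The paper verifies exactly this condition. By adjunction it becomes $\mathrm{Ext}^i(\mathbf{C}_{w_0^\mathfrak{p}}\theta_u\Delta_w,T_x)$. Lemma~\ref{lem-tilt2-1} then gives $\mathbf{C}_{w_0^\mathfrak{p}}\theta_u\Delta_w\cong\theta_{\tilde{u}}\mathbf{C}_{w_0^\mathfrak{p}}\Delta_w\cong\theta_{\tilde{u}}\Delta_{ww_0^\mathfrak{p}}$, which has a Verma flag and hence is Ext-orthogonal to $T_x\in\mathcal{F}(\nabla)$. This uses only the dual Verma flag, so it also proves your intermediate claim for each $\mathbf{K}_{w_0^\mathfrak{p}}\nabla_y$. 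The lemma is indispensable here, because shuffling does not preserve Verma flags: already for $\mathfrak{sl}_2$, $\mathbf{C}_s\Delta_s\cong\nabla_e$.

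So you have placed the lemma in the wrong half, and you have also misread it. It concerns only $\theta_u$ with $u\in W(J)$, permuting the $\theta_s$, $s\in J$, via conjugation by $w_0^\mathfrak{p}$. It says nothing about $\theta_{x'}$ for general $x'\in W$ and does not exchange sides. Moreover, $\Phi$ exists only on $\mathcal{O}_0^\dagger$, which is not stable under projective functors and does not contain the tilting modules in general. Your first route to the standard filtration is therefore not viable.

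For the standard filtration, your homological alternative has the right shape and is the paper's: $\mathrm{Ext}^i(\mathbf{K}_{w_0^\mathfrak{p}}T_x,Q(w,u)^\star)\cong\mathrm{Ext}^i(T_x,\mathcal{L}\mathbf{C}_{w_0^\mathfrak{p}}Q(w,u)^\star)$. But the whole content is to show that $\mathcal{L}\mathbf{C}_{w_0^\mathfrak{p}}Q(w,u)^\star$ is represented by a complex of modules with dual Verma flags concentrated in non-positive degrees. ``Controlled degrees'' is not an argument for this.

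Your suggested shortcut, vanishing of $\mathcal{L}_i\mathbf{C}_{w_0^\mathfrak{p}}$ on these modules for $i>0$, is false. For $\mathfrak{sl}_2$ with $\mathfrak{p}=\mathfrak{g}$ the proper costandard modules are the simples, and $\theta_sL_e=0$ gives $\mathcal{L}\mathbf{C}_sL_e\cong L_e[1]$.

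The missing ingredient is the local statement of Lemma~\ref{lem-na6} in $\mathcal{O}_0(\mathfrak{a})$, namely $\mathrm{Ext}^i(\Delta_a,\mathcal{L}\mathbf{C}_{w_0}L_b)=0$ for $i>0$. It is proved by passing to $\mathcal{R}\mathbf{K}_{w_0}$, which turns a tilting coresolution of $\Delta_a$ into a complex of projectives in non-negative degrees. Hence $\mathcal{L}\mathbf{C}_{w_0}L_b$ is a complex of tilting modules in non-positive degrees. This complex is then lifted from $\mathcal{B}_w$ to a complex in $\mathcal{O}_0$ whose terms are modules $\theta_v\nabla_{ww_0^\mathfrak{p}}$, $v\in W(J)$, and these have dual Verma flags.
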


\begin{proof}
Since the derived functor of $\mathbf{K}_{w_0^\mathfrak{p}}$
is an equivalence, see \cite[Theorem~5.9]{MS3}, applying it
to various $T_x$ produces pairwise non-isomorphic 
indecomposable modules. So, we have the correct number $|W|$
of modules and we only need to check that they all are
tilting modules with respect to our structure. This means
that we need to verify that they all have both standard and
proper costandard filtrations.

To prove existence of a proper costandard filtration, due to
homological duality between standard and costandard modules,
we need to show that
\begin{equation}\label{eq-a1-1}
\mathrm{Ext}^i(\theta_u\Delta_w,\mathbf{K}_{w_0^\mathfrak{p}}T_x)=0, 
\end{equation}
for all $i>0$, all $u\in W(J)$ and all $w\in W_J$.
By adjunction, \eqref{eq-a1-1} is equivalent to 
\begin{equation}\label{eq-a1-2}
\mathrm{Ext}^i(\mathbf{C}_{w_0^\mathfrak{p}}\theta_u\Delta_w,T_x)=0. 
\end{equation}
Now we need the following auxiliary statement.

\begin{lemma}\label{lem-tilt2-1}
There is $\tilde{u}\in W(J)$ such that 
$\mathbf{C}_{w_0^\mathfrak{p}}\theta_u\cong
\theta_{\tilde{u}}\mathbf{C}_{w_0^\mathfrak{p}}.$
\end{lemma}

\begin{proof}
See Section~\ref{s5}.
\end{proof}

Using Lemma~\ref{lem-tilt2-1}, we can equivalently rewrite
\eqref{eq-a1-2} as 
\begin{equation}\label{eq-a1-3}
\mathrm{Ext}^i(\theta_{\tilde{u}}\mathbf{C}_{w_0^\mathfrak{p}}\Delta_w,T_x)=0. 
\end{equation}
Since $w\in W_J$, we have 
$\mathbf{C}_{w_0^\mathfrak{p}}\Delta_w=\Delta_{w_0^\mathfrak{p}w}$.
Consequently, $\theta_{\tilde{u}}\mathbf{C}_{w_0^\mathfrak{p}}\Delta_w
\cong \theta_{\tilde{u}}\Delta_{w_0^\mathfrak{p}w}$ has a Verma flag.
As Verma modules are homologically left orthogonal to dual Verma modules
and $T_x$, being tilting, has a dual Verma flag, we obtain
\eqref{eq-a1-3} and hence also \eqref{eq-a1-1}.
This establishes that $\mathbf{K}_{w_0^\mathfrak{p}}T_x$
has a proper costandard filtration.

Now we need to show that $\mathbf{K}_{w_0^\mathfrak{p}}T_x$
has a standard filtration, equivalently, that 
$\mathbf{K}_{w_0^\mathfrak{p}}T_x$ is left homologically
orthogonal to all proper costandard modules, that is, 
\begin{displaymath}
\mathrm{Ext}^i(\mathbf{K}_{w_0^\mathfrak{p}}T_x,
Q(w,u)^\star)=0, 
\end{displaymath}
for all $i>0$, $w\in W_J$ and $u\in W(J)$. We can, of course, move
$\mathbf{K}_{w_0^\mathfrak{p}}$, which we can change to 
$\mathcal{R}\mathbf{K}_{w_0^\mathfrak{p}}$
as $T_x$ has a dual Verma flag,
over to the other side to equivalently get 
\begin{equation}\label{eq-a1-4}
\mathrm{Ext}^i(T_x,
\mathcal{L}\mathbf{C}_{w_0^\mathfrak{p}}Q(w,u)^\star)=0.
\end{equation}
To prove the latter we will show that each complexes
$\mathcal{L}\mathbf{C}_{w_0^\mathfrak{p}}Q(w,u)^\star$
can be represented by a complex $\mathcal{T}^\bullet(w,u)$
of tilting modules such that $\mathcal{T}^i(w,u)=0$, for all
$i>0$. To do this, we start with the following ``local'' 
auxiliary result.

\begin{lemma}\label{lem-na6}
With respect to the usual highest weight structure of $\mathcal{O}_0$,
for $a,b\in W$ and $i>0$, we have
\begin{displaymath}
\mathrm{Ext}^i(\Delta_a,\mathcal{L}\mathbf{C}_{w_0}L_b)=0. 
\end{displaymath}
\end{lemma}

\begin{proof}
We have
\begin{displaymath}
\mathrm{Ext}^i(\Delta_a,\mathcal{L}\mathbf{C}_{w_0}L_b)=
\mathrm{Ext}^i(\mathcal{R}\mathbf{K}_{w_0}\Delta_a,L_b)
\end{displaymath}
as $\mathcal{L}\mathbf{C}_{w_0}$ and $\mathcal{R}\mathbf{K}_{w_0}$
are mutually inverse equivalences.

Being a Verma module, $\Delta_a$ has a tiling coresolution.
As $\mathbf{K}_{w_0}$ is acyclic on modules with dual Verma flag,
see \cite{MS2},
applying $\mathcal{R}\mathbf{K}_{w_0}$ to this coresolution is the 
same as applying $\mathbf{K}_{w_0}$. But the latter functor maps
tilting modules to projective modules. This means that 
$\mathcal{R}\mathbf{K}_{w_0}\Delta_a$ is represented by a complex
of projective modules and this complex is concentrated in non-negative
degrees.

At the same time, we can represent $L_b$ by its projective 
resolution, which is a complex of projective modules concentrated
in non-positive degrees. As we assume $i>0$, we get 
\begin{displaymath}
\mathrm{Ext}^i(\mathcal{R}\mathbf{K}_{w_0}\Delta_a,L_b)=0,
\end{displaymath}
implying the claim of the lemma.
\end{proof}

From \cite[Subsection~4.3]{MO2}, it follows that, in the setup 
of Lemma~\ref{lem-na6}, the module $\mathcal{L}\mathbf{C}_{w_0}L_b$
can be represented by a complex of tilting modules concentrated
in non-positive degrees. Now we want to export such local picture 
for $\mathfrak{a}$ to  the global picture for $\mathfrak{g}$.

Consider the Serre subquotient $\mathcal{B}_w$ of $\mathcal{O}_0$
as in \cite[Theorem~37]{CMZ}. Then $\mathcal{B}_w$ is equivalent to
the principal block of $\mathcal{O}_0(\mathfrak{a})$ and this
equivalence intertwines the action of $\theta_a$, for $a\in W(J)$.
Also, it maps Verma modules to Verma modules. In particular, 
the simple Verma module in  $\mathcal{B}_w$ is the image of
$\Delta_{ww_0^\mathfrak{p}}$. Thus the indecomposable 
tilting modules in  $\mathcal{B}_w$ are the images of 
$\theta_a \Delta_{ww_0^\mathfrak{p}}$, for $a\in W(J)$.
Moreover, the quotient map is full and faithful
(due to the equivalence in \cite[Theorem~37]{CMZ}).
So, we can lift any complex of tilting objects in 
$\mathcal{B}_w$ to a complex of modules over the form
$\theta_a \Delta_{ww_0^\mathfrak{p}}$. Using $\star$,
we can alternatively lift any complex of tilting objects in 
$\mathcal{B}_w$ to  a complex of modules
of the form $\theta_a \nabla_{ww_0^\mathfrak{p}}$.
Note that any module of the latter form has a
filtration by dual Verma modules.

Now consider the object 
$\mathcal{L}\mathbf{C}_{w_0^\mathfrak{p}} L_\mathfrak{a}(u\cdot \mu_w)$ in $\mathcal{B}_w$.
By Lemma~\ref{lem-na6}, it can be represented by a 
complex of tilting objects in $\mathcal{B}_w$ concentrated in
non-positive degrees. Lifting this as described in the previous paragraph,
we get a complex of modules in $\mathcal{O}_0$  in which each
component has a filtration by dual Verma
modules and this complex is concentrated in
non-positive degrees. By construction, this complex 
represents $\mathcal{L}\mathbf{C}_{w_0^\mathfrak{p}}Q(w,u)^\star$.
Since it lives in non-positive degrees, homomorphisms from tilting modules
concentrated in positive degrees to this complex vanish. 
This implies \eqref{eq-a1-4}
and completes the proof of our proposition.
\end{proof}

\section{Proof of Lemma~\ref{lem-tilt2-1}}\label{s5}

\subsection{Singular Artin monoid}\label{s5.1}

Consider the {\em singular Artin monoid} $SB$ associated to 
$W$, see \cite{J-Z} and references therein. If
$\mathtt{m}$ is the Coxeter matrix for $W$, the monoid
$SB$ has generators $s$ and $\underline{s}$, where $s$ runs over
all elements in $S$,
which are subject to the usual braid relations for all 
elements in $S$ combined with the following relations 
which involve $\underline{s}$: 
\begin{displaymath}
\underline{s}\,s=s\, \underline{s},\text{ for all }s\in S;
\end{displaymath}
\begin{displaymath}
\underline{s}\,\underline{t}=\underline{t}\,\underline{s},
\text{ for all }s,t\in S\text{ such that }\mathtt{m}(s,t)=2;
\end{displaymath}
\begin{displaymath}
\underbrace{\underline{s}\,ts\dots t}_{\mathtt{m}(s,t)\text{ factors}}=
\underbrace{ts\dots t\, \underline{s}}_{\mathtt{m}(s,t)\text{ factors}},
\text{ for all }s,t\in S\text{ such that }\mathtt{m}(s,t)\text{ is even};
\end{displaymath}
\begin{displaymath}
\underbrace{\underline{s}\,ts\dots s}_{\mathtt{m}(s,t)\text{ factors}}=
\underbrace{ts\dots s\, \underline{t}}_{\mathtt{m}(s,t)\text{ factors}},
\text{ for all }s,t\in S\text{ such that }\mathtt{m}(s,t)\text{ is odd}.
\end{displaymath}

\subsection{Commuting non-invertible generators with $w_0$}\label{s5.2}

Let $f$ be the permutation on $S$ defined via $w_0sw_0=f(s)$,
for $s\in S$. The aim of this subsection is to prove that 
$w_0\underline{s}w_0=\underline{f(s)}$, for all $s\in S$.
Unfortunately, we do not see any elegant argument for this,
so our proof will be a brute force case-by-case verification.
To prove the claim, it is enough to consider irreducible 
Weyl groups.
In the remainder of this subsection, we go over all irreducible 
Weyl groups, type by type. We use \cite[Table~1]{BKOP} for
explicit reduced expressions for $w_0$ in different types.
We can also assume that the rank of $W$ is at least $3$,
for, if the rank is $2$, the claim follows directly from the
defining relations for $SB$.

\subsubsection{\bf Type $A$}\label{s5.2.1}

In type $A$, we assume that our Dynkin diagram is as follows:
\begin{displaymath}
\xymatrix{s_1\ar@{-}[rr]&&s_2\ar@{-}[rr]&&\dots\ar@{-}[rr]&&s_k} 
\end{displaymath}
Then $w_0=w_0^{(k)}:=s_1(s_2s_1)(s_3s_2s_1)\dots (s_ks_{k-1}\dots s_2s_1)$
and $f(s_i)=s_{k+1-i}$, for $i=1,2,\dots,k$. We proceed by induction 
on $k$, with the basis of the induction, given by the cases $k=1,2$, 
being trivial.

To prove the induction step, we start with $i\leq k$. Then, using the
inductive assumption and the observation that 
$w_0^{(k+1)}=w_0^{(k)}(s_{k+1}s_{k}\dots s_2s_1)$, we have
\begin{displaymath}
\underline{s_i}w_0^{(k+1)}=
w_0^{(k)}\underline{s_{k+1-i}}(s_{k+1}s_{k}\dots s_2s_1).
\end{displaymath}
If we denote $k+1-i$ by $j$, we can use the defining relation to move
$\underline{s_{j}}$ past a prefix of $s_{k+1}s_{k-1}\dots s_2s_1$
all the way to $s_{j+1}s_j$, then we use
$\underline{s_{j}}s_{j+1}s_j=s_{j+1}s_j\underline{s_{j+1}}$
and then we can move $\underline{s_{j+1}}$ past the remaining suffix.
As $j+1=k+2-i$, this case is done.

It remains to consider the case $i=k+1$. In this case, we use
commutation to obtain 
\begin{displaymath}
\underline{s_{k+1}}w_0^{(k+1)}=
w_0^{(k-1)}\underline{s_{k+1}}(s_{k}s_{k-1}\dots s_2s_1)(s_{k+1}s_{k}\dots s_2s_1).
\end{displaymath}
Rewrite the suffix starting with 
$\underline{s_{k+1}}$ on the right hand side of this equality as follows:
\begin{equation}\label{eq-nm3}
\underline{s_{k+1}}(s_{k}s_{k+1})(s_{k-1}s_k)\dots (s_1s_2)s_1.
\end{equation}
Now we use the relations, first we have:
\begin{displaymath}
(s_{k}s_{k+1})\underline{s_{k}}(s_{k-1}s_k)\dots (s_1s_2)s_1,
\end{displaymath}
then
\begin{displaymath}
(s_{k}s_{k+1})(s_{k-1}s_k)\underline{s_{k-1}}\dots (s_1s_2)s_1,
\end{displaymath}
and, proceeding inductively, we obtain
\begin{displaymath}
(s_{k}s_{k+1})(s_{k-1}s_k)\dots (s_1s_2)\underline{s_{1}}s_1.
\end{displaymath}
Here, using $\underline{s_{1}}s_1=s_1\underline{s_{1}}$,
we, finally,  obtain $\underline{s_{1}}$ at the rightmost place.
This completes the proof in type $A$.

\subsubsection{\bf Types $B$  and $C$}\label{s5.2.2}

In types $B$ and $C$, we assume that our Dynkin diagram is
obtained by orienting the double edge in the following:
\begin{displaymath}
\xymatrix{s_1\ar@{-}[rr]&&s_2\ar@{-}[rr]&&\dots\ar@{-}[rr]&&
s_{k-1}\ar@{=}[rr]&&s_k} 
\end{displaymath}
Note that the Weyl group and its presentation
do not depend on this orientation.
Then $w_0=w_0^{(k)}$ is given by 
\begin{displaymath}
s_k(s_{k-1}s_ks_{k-1})
(s_{k-2}s_{k-1}s_ks_{k-1}s_{k-2})\dots (s_1s_2\dots 
s_{k-1}s_ks_{k-1}\dots s_2s_1), 
\end{displaymath}
and $f(s_i)=s_{i}$, for $i=1,2,\dots,k$. We proceed by induction 
on $k$, with the basis $k=2$ of the induction being trivial.

To prove the induction step, we start with $1<i<k+1$. Then, using the
inductive assumption and the observation that 
$w_0^{(k+1)}=w_0^{(k)}(s_1\dots s_{k+1}\dots s_1)$, we have
\begin{displaymath}
\underline{s_i}w_0^{(k+1)}=
w_0^{(k)}\underline{s_i}(s_1\dots s_{k+1}\dots s_1).
\end{displaymath}
We now move $\underline{s_i}$ through the commuting part of the prefix
to $\underline{s_i}s_{i-1}s_i$, then use
$\underline{s_i}s_{i-1}s_i=s_{i-1}s_i\underline{s_{i-1}}$, then move
$\underline{s_{i-1}}$ all the way to 
$\underline{s_{i-1}}s_is_{i-1}$, then use the relation
$\underline{s_{i-1}}s_is_{i-1}=s_is_{i-1}\underline{s_i}$ and now can
move $\underline{s_i}$ out to the right via the commuting part of the suffix.

Next consider the case $i=1$. In this case, using commutativity, we have
\begin{displaymath}
\underline{s_1}w_0^{(k+1)}=
w_0^{(k-1)}\underline{s_1}(s_2\dots s_{k+1}\dots s_2)(s_1\dots s_{k+1}\dots s_1).
\end{displaymath}
Let us now focus on the part to the right of $\underline{s_1}$, 
which we call $w$. There is
a segment $s_2s_1s_2$ in the middle, which we rewrite as $s_1s_2s_1$ and move
the left $s_1$ from here all the way to the left in our expression
for $w$. This gives us $\underline{s_1}s_2s_1$ at the very left.
Using $\underline{s_1}s_2s_1=s_2s_1\underline{s_2}$, we now get the
subword $\underline{s_2}s_3$. Proceeding inductively, we eventually obtain
\begin{displaymath}
(s_2s_1)(s_3s_2)\dots (s_ks_{k-1})
(\underline{s_k}s_{k+1}s_ks_{k+1})(s_{k-1}\dots s_1s_k\dots s_1).
\end{displaymath}
Here we use $\underline{s_k}s_{k+1}s_ks_{k+1}=s_{k+1}s_ks_{k+1}\underline{s_k}$
to get $\underline{s_k}(s_{k-1}\dots s_1s_k\dots s_1)$. We write
$(s_{k-1}\dots s_1s_k\dots s_1)$ as 
$(s_{k-1}s_k)(s_{k-2}s_{k-1})\dots (s_1s_2)s_1$.
Now, we can use the relation
$\underline{s_k}(s_{k-1}s_k)=(s_{k-1}s_k)\underline{s_{k-1}}$,
then $\underline{s_{k-1}}(s_{k-2}s_{k-1})=(s_{k-2}s_{k-1})\underline{s_{k-2}}$
and so on all the way to $\underline{s_{1}}s_1$. Here, we finally use
$\underline{s_{1}}s_1=s_1\underline{s_{1}}$ and we are done.

Finally, consider the case $i=k+1$. In this case, using the inductive
assumption, we have
\begin{displaymath}
\underline{s_{k+1}}w_0^{(k+1)}=
w_0^{(k)}\underline{s_{k+1}}(s_1\dots s_{k+1}\dots s_1).
\end{displaymath}
Using commutativity, we can move $\underline{s_{k+1}}$ all the way to
$\underline{s_{k+1}}s_{k}s_{k+1}s_{k}$, then we commute it past
$s_{k}s_{k+1}s_{k}$ and then move out all the way to the extreme right.
This completes the proof in types $B$ and $C$.

\subsubsection{\bf Type $D$}\label{s5.2.3}

In type $D$, we assume that our Dynkin diagram is:
\begin{displaymath}
\xymatrix{s_1\ar@{-}[rr]&&s_2\ar@{-}[rr]&&\dots\ar@{-}[rr]&&
s_{k-2}\ar@{-}[rr]\ar@{-}[d]&&s_{k-1}\\&&&&&&s_k&&} 
\end{displaymath}
Then $w_0=w_0^{(k)}$ is given, for $k\geq 2$, by 
\begin{multline*}
s_ks_{k-1}(s_{k-2}s_ks_{k-1}s_{k-2})
(s_{k-3}s_{k-2}s_ks_{k-1}s_{k-2}s_{k-3})\dots\\ \dots  (s_1s_2\dots 
s_{k-2}s_ks_{k-1}s_{k-2}\dots s_2s_1),
\end{multline*}
and 
\begin{displaymath}
f(s_i)=
\begin{cases}
s_i,& k\text{ is even};\\
s_i,& i\leq k-2\text{ and }k \text{ is odd};\\
s_k, & i=k-1\text{ and }k \text{ is odd};\\
s_{k-1}, & i=k\text{ and }k \text{ is odd}.
\end{cases}
\end{displaymath} 
As usual, we proceed by induction on $k$, with the basis given by
the cases $k=1,2,3$ being already dealt with as type $A$.

So, let $k>3$ and take $2\leq i \leq k-2$. Then, using the inductive
assumption and the observation that 
$w_0^{(k+1)}=w_0^{(k)}(s_1s_2\dots 
s_{k-2}s_ks_{k-1}s_{k-2}\dots s_2s_1)$, we can write 
\begin{displaymath}
\underline{s_i}w_0^{(k+1)}=
w_0^{(k)}\underline{s_i}(s_1s_2\dots 
s_{k-2}s_ks_{k-1}s_{k-2}\dots s_2s_1).
\end{displaymath}
We use commutativity to move $\underline{s_i}$ all the way until it hits
$s_{}i-1$ giving $\underline{s_i}s_{i-1}s_i$, then we use 
$\underline{s_i}s_{i-1}s_i=s_{i-1}s_i\underline{s_{i-1}}$
and now we can move $\underline{s_{i-1}}$ all the way to
$\underline{s_{i-1}}s_is_{i-1}$. Using 
$\underline{s_{i-1}}s_is_{i-1}=s_is_{i-1}\underline{s_i}$,
we can now move $\underline{s_i}$ to the extreme right and we are done.

Next consider the cases $i=k$ or  $i=k-1$, which are equivalent, by symmetry.
From the inductive assumption and using the parity considerations,
we just need to check that, say, commuting $\underline{s_{k-1}}$
past $s_1s_2\dots 
s_{k-2}s_{k-1}s_ks_{k-2}\dots s_2s_1$ outputs 
$\underline{s_{k}}$ on the other side.
We use commutativity and move $\underline{s_{k-1}}$ until we get
$\underline{s_{k-1}}s_{k-2}s_{k-1}$. Now we use
$\underline{s_{k-1}}s_{k-2}s_{k-1}=
s_{k-2}s_{k-1}\underline{s_{k-2}}$ and get
$\underline{s_{k-2}}s_ks_{k-2}$. We replace this with
$s_ks_{k-2}\underline{s_{k}}$ and now we can move out
$\underline{s_{k}}$ all the way to the extreme right.

It remains to consider the case $i=1$. Here we can use commutativity
to obtain 
\begin{displaymath}
\underline{s_1}w_0^{(k+1)}=
w_0^{(k-2)}\underline{s_{1}}
(s_2\dots 
s_{k-2}s_{k-1}s_ks_{k-2}\dots s_2)
(s_1\dots 
s_{k-2}s_{k-1}s_ks_{k-2}\dots s_1).
\end{displaymath}
As above, look at the part to the right of $\underline{s_{1}}$.
We observe the $s_2s_1s_2$ subword in the middle.
We rewrite it as $s_1s_2s_1$ and move the right $s_1$ all the way to the left
to get $\underline{s_{1}}s_2s_1$. This we can rewrite as
$s_2s_1\underline{s_{2}}$. Playing a similar game over and over, we
eventually get 
\begin{displaymath}
\underline{s_{k-1}}(s_ks_{k+1})(s_{k-1}s_{k-2}\dots s_1)
(s_ks_{k+1})(s_{k-1}s_{k-2}\dots s_1).
\end{displaymath}
Now we move the middle $s_k$ and $s_{k+1}$ (which, by the way, commute)
to the left all the way to get 
$(s_ks_{k+1})s_{k-1}(s_ks_{k+1})=(s_{k+1}s_k)s_{k-1}(s_ks_{k+1})$.
We rewrite this as $s_{k+1}s_{k-1}s_ks_{k-1}s_{k+1}$ and now we can use
$\underline{s_{k-1}}s_{k+1}s_{k-1}=s_{k+1}s_{k-1}\underline{s_{k+1}}$
and further move $\underline{s_{k+1}}$ past $s_k$ as they commute.
This gives $\underline{s_{k+1}}s_{k-1}s_{k+1}$ which we rewrite as
$s_{k-1}s_{k+1}\underline{s_{k-1}}$ giving
\begin{displaymath}
\underline{s_{k-1}}(s_{k-2}\dots s_1) (s_{k-1}s_{k-2}\dots s_1).
\end{displaymath}
This is now a pure type $A$ picture. We rewrite this as
\begin{displaymath}
\underline{s_{k-1}}(s_{k-2}s_{k-1})
(s_{k-3}s_{k-2}) \dots (s_1s_2)s_1
\end{displaymath}
and get a situation similar to \eqref{eq-nm3}. Using the same 
procedure as in that case, we eventually get $\underline{s_{1}}$
on the extreme right. This completes the proof in type $D$.

\subsubsection{\bf Type $F_4$}\label{s5.2.4}

In type $F_4$, we assume that our Dynkin diagram is:
\begin{displaymath}
\xymatrix{s_1\ar@{-}[rr]&&s_2\ar@{=>}[rr]&&
s_{3}\ar@{-}[rr]&&s_{4}} 
\end{displaymath}
Then $w_0$ is given by 
\begin{displaymath}
w_0=w'_0(s_4s_3s_2s_1s_3s_2s_3s_4s_3s_2s_1s_3s_2s_3s_4),
\end{displaymath}
where $w'_0$ is the longest element in type $B_3$.
Also $f$ is the identity. From type $B$, we also know that 
$\underline{s_{i}}w'_0=w'_0\underline{s_{i}}$, for $i=1,2,3$.
So, for these three elements we just need to show that they
commute with the additional factor 
$s_4s_3s_2s_1s_3s_2s_3s_4s_3s_2s_1s_3s_2s_3s_4$.

Let us start with $\underline{s_{1}}$. We use commutativity to move it
past $s_4s_3$, then we use that $\underline{s_{1}}s_2s_1=s_2s_1\underline{s_{2}}$.
Then $\underline{s_{2}}$ commutes with $s_3s_2s_3$ and then with
$s_4$, so we get 
\begin{displaymath}
\underline{s_{2}}s_3s_2s_1s_3s_2s_3s_4.
\end{displaymath}
We swap the commuting $s_1s_3$ in the middle and move
$\underline{s_{2}}$ past $s_3s_2s_3$ to get
$\underline{s_{2}}s_1s_2s_3s_4$. Here we use
$\underline{s_{2}}s_1s_2=s_1s_2\underline{s_{1}}$ and now we can move
$\underline{s_{1}}$ out to the extreme right.

Next we move $\underline{s_{2}}$ through
$s_4s_3s_2s_1s_3s_2s_3s_4s_3s_2s_1s_3s_2s_3s_4$.
It commutes with $s_4$ and then we can swap $s_1s_3$
to $s_3s_1$ in the leftmost occurrence to get
$\underline{s_{2}}s_3s_2s_3$, which we rewrite as
$s_3s_2s_3\underline{s_{2}}$ to get 
\begin{displaymath}
\underline{s_{2}}s_1s_2s_3s_4s_3s_2s_1s_3s_2s_3s_4.
\end{displaymath}
We replace $\underline{s_{2}}s_1s_2$ by 
$s_1s_2\underline{s_{1}}$ and can commute $\underline{s_{1}}$
with $s_3s_4s_3$ to get $\underline{s_{1}}s_2s_1s_3s_2s_3s_4$.
Now, replace $\underline{s_{1}}s_2s_1$ with
$s_2s_1\underline{s_{2}}$ and move $\underline{s_{2}}$
first through $s_3s_2s_3$ and the through $s_4$, and we are done.

The above prove the claim for $s_1$ and $s_2$. For
$s_3$ and $s_4$, the claim follows by the symmetry of the 
Weyl group. This completes the proof in type $F_4$.

\subsubsection{\bf Type $E_6$}\label{s5.2.5}

In type $E_6$, we assume that our Dynkin diagram is:
\begin{displaymath}
\xymatrix{s_1\ar@{-}[rr]&&s_2\ar@{-}[rr]&&
s_{3}\ar@{-}[rr]\ar@{-}[d]&&s_{5}\ar@{-}[rr]&&s_{6}\\
&&&&s_{4}&&&&} 
\end{displaymath}
Then $w_0$ is given by 
\begin{displaymath}
w_0=w'_0(s_6 s_5 s_3 s_4 s_2 s_1 s_3 s_2 s_5 s_3 s_4 s_6 s_5 s_3 s_2 s_1),
\end{displaymath}
where $w'_0$ is the longest element in type $D_5$.
Also $f$ is given by:
\begin{displaymath}
\begin{array}{c||c|c|c|c|c|c}
i&1&2&3&4&5&6\\
\hline
f(i)&6&5&3&4&2&1
\end{array}
\end{displaymath}
From type $D_5$, we also know that 
$\underline{s_{i}}w'_0=w'_0\underline{s_{i}}$, for $i=1,2,3$,
and, additionally, we have  
$\underline{s_{4}}w'_0=w'_0\underline{s_{5}}$
as well as $\underline{s_{5}}w'_0=w'_0\underline{s_{4}}$.
Due to symmetry, we only need to check the cases $i=1,2,3,4$
and in these cases try to commute the outcome of the 
commutation of  $\underline{s_{i}}$ with $w'_0$
with the additional factor 
$s_6 s_5 s_3 s_4 s_2 s_1 s_3 s_2 s_5 s_3 s_4 s_6 s_5 s_3 s_2 s_1$.
Let us do that.

We start with $\underline{s_{1}}$, which we can immediately pass
through $s_6 s_5 s_3 s_4$ and then change 
$\underline{s_{1}}s_2 s_1$  to $s_2 s_1 \underline{s_{2}}$.
Next we change $\underline{s_{2}} s_3 s_2$ to
$s_3 s_2 \underline{s_{3}}$, next 
$\underline{s_{3}} s_5 s_3$ to $ s_5 s_3\underline{s_{5}}$.
Now we commute $\underline{s_{5}}$ past $s_4$ and change
$\underline{s_{5}}s_6 s_5$ to $s_6 s_5\underline{s_{6}}$.
Finally, we move $\underline{s_{6}}$ out on the right. 

Next we do $\underline{s_{2}}$, which we pass
through $s_6 s_5$. Note that $s_4$ and $s_2$ commute, so
we swap them and replace 
$\underline{s_{2}}s_3 s_2$  by $s_3 s_2 \underline{s_{3}}$.
We swap $s_4$ and $s_1$ and move  $\underline{s_{3}}$
past $s_1$. Now we replace 
$\underline{s_{3}}s_4 s_3$  by $s_4 s_3 \underline{s_{4}}$
and commute $\underline{s_{4}}$ with $s_2 s_5$.
Now $\underline{s_{4}}s_3 s_4$ is replaced by $s_3 s_4 \underline{s_{3}}$,
and then $\underline{s_{3}}$ moves through $s_6$.
Finally, $\underline{s_{3}} s_5 s_3$ equals
$s_5 s_3\underline{s_{5}}$ and we can move $\underline{s_{5}}$
out past $s_2 s_1$.

Next we do $\underline{s_{3}}$, which we commute with $s_6$, to start with.
Then $\underline{s_{3}} s_5 s_3$ is changed to $s_5 s_3\underline{s_{5}}$
and $\underline{s_{5}}$ commutes with $s_4 s_2 s_1$. Now we swap 
$s_2$ and $s_5$ and replace $\underline{s_{5}} s_3 s_5$ with
$s_3 s_5 \underline{s_{3}}$. After that, we change 
$\underline{s_{3}} s_2 s_3 $ to $s_2 s_3 \underline{s_{2}}$ and, in turn,
$\underline{s_{2}}$ commutes with $s_4 s_6 s_5$. Finally,
$\underline{s_{2}}s_3 s_2$ changes to $s_3 s_2\underline{s_{3}}$
and we move $\underline{s_{3}}$ out past $s_1$.

Our last element is $\underline{s_{4}}$ and we recall that
$\underline{s_{4}}w'_0=w'_0\underline{s_{5}}$. So, we try to 
move $\underline{s_{5}}$ through our additional element
$s_6 s_5 s_3 s_4 s_2 s_1 s_3 s_2 s_5 s_3 s_4 s_6 s_5 s_3 s_2 s_1$.
To start with, $\underline{s_{5}}s_6 s_5$ changes to
$s_6 s_5\underline{s_{6}}$. Now $\underline{s_{6}}$ commutes
with $s_3 s_4 s_2 s_1 s_3 s_2$. Now we note that $s_3s_4$
commutes with $s_6$, so we swap them and replace
$\underline{s_{6}}s_5 s_6$ with $s_5 s_6\underline{s_{5}}$.
Next, we swap $s_4$ and $s_5$ and replace 
$\underline{s_{5}}s_3 s_5$ with $s_3 s_5\underline{s_{3}}$.
Finally, $\underline{s_{3}} s_4  s_3$ changes to
$ s_4  s_3\underline{s_{4}}$ and we can move
$\underline{s_{4}}$ to the extreme right past $s_2 s_1$.
This completes the proof in type $E_6$.

\subsubsection{\bf Type $E_7$}\label{s5.2.6}

In type $E_7$, we assume that our Dynkin diagram is:
\begin{displaymath}
\xymatrix{s_1\ar@{-}[rr]&&s_2\ar@{-}[rr]&&
s_{3}\ar@{-}[rr]\ar@{-}[d]&&s_{5}\ar@{-}[rr]&&s_{6}\ar@{-}[rr]&&s_{7}\\
&&&&s_{4}&&&&} 
\end{displaymath}
Then $w_0$ is given by 
\begin{displaymath}
w_0=w'_0(s_7 s_6 s_5 s_3 s_4 s_2 s_1 s_3 s_2 
s_5 s_3 s_4 s_6 s_5 s_3 s_2 s_1 s_7 s_6 s_5 s_3 s_4 s_2
s_3 s_5 s_6 s_7),
\end{displaymath}
where $w'_0$ is the longest element in type $E_6$.
Also $f$ is the identity. From type $E_6$, we also know that 
$\underline{s_{i}}w'_0=w'_0\underline{s_{i}}$, for $i=3,4$,
and, additionally, we have  
$\underline{s_{1}}w'_0=w'_0\underline{s_{6}}$,
$\underline{s_{6}}w'_0=w'_0\underline{s_{1}}$,
$\underline{s_{2}}w'_0=w'_0\underline{s_{5}}$
as well as $\underline{s_{5}}w'_0=w'_0\underline{s_{2}}$.
Now we need to commute all $\underline{s_{i}}$ past the
additional factor 
$s_7 s_6 s_5 s_3 s_4 s_2 s_1 s_3 s_2 
s_5 s_3 s_4 s_6 s_5 s_3 s_2 s_1 s_7 s_6 s_5 s_3 s_4 s_2
s_3 s_5 s_6 s_7$.
After that, the element $\underline{s_{7}}$ has to be 
considered separately.

Let us start with $\underline{s_{6}}$. Commuting with 
$w'_0$ gives $\underline{s_{1}}$. Now, $\underline{s_{1}}$
commutes with $s_7 s_6 s_5 s_3 s_4$ and then changes to
$\underline{s_{2}}$ passing through $s_2 s_1$. This
changes to $\underline{s_{3}}$ passing through $s_3 s_2$
and then to $\underline{s_{5}}$ passing through $s_5 s_3$.
Then $\underline{s_{5}}$ commutes with $s_4$ and 
changes to $\underline{s_{6}}$ passing through $s_3 s_2 s_1$.
Then $\underline{s_{6}}$ commutes with $s_4$ and 
changes to $\underline{s_{7}}$ passing through $ss_7 s_6$.
Finally, $\underline{s_{7}}$ commutes with 
$s_5 s_3 s_4 s_2s_3 s_5$ and 
changes to $\underline{s_{6}}$ passing through $s_6 s_7$.
So, we have checked that $\underline{s_{6}}$ commutes with $w_0$.

Next we do $\underline{s_{5}}$. Commuting with 
$w'_0$ gives $\underline{s_{2}}$. Now, $\underline{s_{2}}$
commutes with $s_7 s_6 s_5$. We swap $s_2$ and $s_4$
after $s_3$, since they commute. And then we can change 
$\underline{s_{2}}$ to $\underline{s_{3}}$ passing through $s_3 s_2$.
We swap $s_1$ and $s_3$
after $s_4$, since they commute. And then we can change 
$\underline{s_{3}}$ to $\underline{s_{4}}$ passing through $s_4 s_3$.
Now, $\underline{s_{4}}$ commutes with $s_1 s_2  s_5$ and then 
changes to $\underline{s_{3}}$ passing through $s_3 s_4$.
Then, $\underline{s_{3}}$ commutes with $s_6$ and then 
changes to $\underline{s_{5}}$ passing through $s_5 s_3$.
In turn, $\underline{s_{5}}$ commutes with $s_2 s_1 s_7 $ and then 
changes to $\underline{s_{6}}$ passing through $s_6 s_5$.
Next, $\underline{s_{6}}$ commutes with $s_3 s_4 s_2 s_3$ and then 
changes to $\underline{s_{5}}$ passing through $s_5 s_6$.
Finally, we can pull this $\underline{s_{5}}$ out through $s_7$.
So, we have checked that $\underline{s_{5}}$ commutes with $w_0$.

Next we do $\underline{s_{4}}$. Commuting with 
$w'_0$ gives $\underline{s_{4}}$. Now, $\underline{s_{4}}$
commutes with $s_7 s_6 s_5$ and changes to  $\underline{s_{3}}$
passing through $s_3 s_4$. We swap $s_1$ and $s_3$
after $S_2$ since they commute and now we can move
$\underline{s_{3}}$ through $s_2 s_3$ changing it to $\underline{s_{2}}$.
Further, $\underline{s_{2}}$ changes to $\underline{s_{1}}$
passing through $s_1s_2$. This $\underline{s_{1}}$
commutes with $s_5 s_3 s_4 s_6 s_5 s_3$ and changes to $\underline{s_{2}}$
passing through $s_2s_1$. The $\underline{s_{2}}$
commutes with $s_7 s_6 s_5$ and changes to $\underline{s_{3}}$
passing through $s_3s_2$ (here we swapped $s_2$ and $s_4$
after $s_3$ as they commute). Finally, we change 
$\underline{s_{3}}$ to $\underline{s_{4}}$ when it passes
through $s_4 s_3$ and pull it out on the right through
$s_5 s_6 s_7$. We have checked that 
$\underline{s_{4}}$ commutes with $w_0$.

Next we do $\underline{s_{3}}$. Commuting with 
$w'_0$ gives $\underline{s_{3}}$. Now, $\underline{s_{3}}$
commutes with $s_7 s_6$ and changes to  $\underline{s_{5}}$
passing through $s_5 s_3$.  This $\underline{s_{5}}$ commues
with $s_4 s_2 s_1$. Now we swap $s_2$ and $s_5$ after $s_3$
as they commute and move $\underline{s_{5}}$ through 
$s_3s_5$ changing it to $\underline{s_{3}}$. Next we move
$\underline{s_{3}}$ through $s_2 s_3$ changing it to 
$\underline{s_{2}}$. Now $\underline{s_{2}}$ commutes with
$s_4 s_6 s_5$ and changes to $\underline{s_{3}}$ after
passing through $s_3 s_2$. This $\underline{s_{3}}$ commutes
with $s_1 s_7 s_6$ and changes to $\underline{s_{5}}$
after passing through $s_5 s_3$. Finally, this 
$\underline{s_{5}}$ commutes
with $s_4 s_2$ and changes to $\underline{s_{3}}$
after passing through $s_3 s_5$ and then we can pull
it out on the right via $s_6 s_7$. We have checked that 
$\underline{s_{3}}$ commutes with $w_0$.

Out next step is $\underline{s_{2}}$. Commuting with 
$w'_0$ gives $\underline{s_{5}}$. Now, $\underline{s_{5}}$
commutes with $s_7 $ and changes to  $\underline{s_{6}}$
passing through $s_6 s_5$.  This $\underline{s_{6}}$ commutes
with $s_3 s_4 s_2 s_1 s_3 s_2$. We swap $s_3 s_4$ with 
$s_6$ as they commute and now we can move $\underline{s_{6}}$
through $s_5 s_6$ changing it to $\underline{s_{5}}$.
after passing through $s_5 s_3$. We swap $s_4$ and $s_5$
after $s_3$ as they commute and now we can move 
$\underline{s_{5}}$ through $s_3 s_5$ changing it to
$\underline{s_{3}}$. Next we move $\underline{s_{3}}$
through $s_4 s_3$ changing it to $\underline{s_{3}}$.
This $\underline{s_{4}}$ commutes
with $s_2 s_1 s_7 s_6 s_5$ and changes to $\underline{s_{3}}$
when passing through $s_3 s_4$. Finally, this 
$\underline{s_{3}}$ changes to $\underline{s_{2}}$
when passing through $s_2 s_3$ and we can pull it out on
the right through $s_3 s_5 s_6 s_7$. We have checked that 
$\underline{s_{2}}$ commutes with $w_0$.

Out next step is $\underline{s_{1}}$.  Commuting with 
$w'_0$ gives $\underline{s_{6}}$ and it changes to 
$\underline{s_{7}}$ passing through $s_7 s_6$.
This $\underline{s_{7}}$ commutes with
$s_5 s_3 s_4 s_2 s_1 s_3 s_2 
s_5 s_3 s_4$. Now we swap $s_5 s_3 s_2 s_1$
and $s_7$ after $s_6$ and change
$\underline{s_{7}}$ to $\underline{s_{6}}$
after passing through $s_6 s_7$.
Next we swap $s_3 s_2 s_1$
and $s_6$ after $s_5$ and change
$\underline{s_{6}}$ to $\underline{s_{5}}$
after passing through $s_5 s_6$.
Next we swap $s_2 s_1$
and $s_5$ after $s_3$ and change
$\underline{s_{5}}$ to $\underline{s_{3}}$
after passing through $s_3 s_5$.
Next we swap $s_1$
and $s_3$ after $s_2$ and change
$\underline{s_{3}}$ to $\underline{s_{2}}$
after passing through $s_2 s_3$.
Finally,  we swap $s_4$
and $s_2$ after $s_1$ and change
$\underline{s_{2}}$ to $\underline{s_{1}}$
after passing through $s_1 s_2$ and then
pull it out on the right through $s_4 s_3 s_5 s_6 s_7$.
We have checked that 
$\underline{s_{1}}$ commutes with $w_0$.

It remains to check $\underline{s_{7}}$. Let
$w''_0$ be the longest element for type $D_5$.
Then we have $\underline{s_{7}}w''_0=w''_0\underline{s_{7}}$
due to commutativity. Now we have to move 
$\underline{s_{7}}$ through the product of the additional
factor in type $E_6$ followed by the additional factor in
type $E_7$. To save space, we write this product as a sequence
of indices:
\begin{displaymath}
6534213253465321765342132534653217653423567
\end{displaymath}
We need to  move $\underline{{7}}$ through it from left to right.
For this, we will need some help from SageMath. Below we use SageMath
to compute reduced expressions and descent sets.

By construction, our element is $w''_0w_0$ and hence its left descent set
is $\{6,7\}$. According to, SageMath our element equals
\begin{displaymath}
767 * 5342356712356435234123567235643523412356.
\end{displaymath}
Note that $\underline{{7}}$ commutes with $7$ and then changes to
$\underline{{6}}$ passing through $67$. The left descent set of
the remaining element is just $\{5\}$ and we can write it as 
\begin{displaymath}
56 *  34235671235643523412356735643523412356
\end{displaymath}
Now $\underline{{6}}$ changes to
$\underline{5}$ passing through $56$. The left descent set of
the remaining element is just $\{3\}$ and we can write it as 
\begin{displaymath}
35 *  423567123564352341235675643523412356
\end{displaymath}
Now $\underline{{5}}$ changes to
$\underline{3}$ passing through $35$. The left descent set of
the remaining element is just $\{2,4\}$ and we can write it as 
\begin{displaymath}
43 *  2356712356435234123567563523412356
\end{displaymath}
Now $\underline{{3}}$ changes to
$\underline{4}$ passing through $43$. The left descent set of
the remaining element is $\{2\}$ and it commutes with 
$\underline{4}$. Removing the leftmost $2$, we get an element 
with the left descent set $\{1,3\}$ and $\underline{4}$ commutes
with $1$. Removing this $1$, we get an element 
with the left descent set $\{3\}$ which we can write
\begin{displaymath}
34 *  567235643523412356763523412356
\end{displaymath}
and $\underline{4}$ changes to $\underline{3}$ moving through
$34$. The left descent set of
the remaining element is just $\{2,5\}$ and we can write it as 
\begin{displaymath}
23 *  5673564352341235673523412356
\end{displaymath}
Now $\underline{{3}}$ changes to
$\underline{2}$ passing through $23$.  The left descent set of
the remaining element is $\{5\}$ and it commutes with 
$\underline{2}$. Removing the leftmost $5$, we get an element 
with the left descent set $\{3,6\}$ and $\underline{2}$ commutes
with $6$. Removing the leftmost $6$, we get an element 
with the left descent set $\{3,7\}$ and $\underline{2}$ commutes
with $7$. Removing the leftmost $6$, we get an element 
with the left descent set $\{3\}$ which we write
\begin{displaymath}
32 * 56435234123567523412356
\end{displaymath}
Now $\underline{{2}}$ changes to
$\underline{3}$ passing through $32$. 
The left descent set of
the remaining element is just $\{4,5\}$ and we can write it as 
\begin{displaymath}
43 *  563523412356723412356
\end{displaymath}
Now $\underline{{3}}$ changes to
$\underline{4}$ passing through $43$.
Then, $\underline{4}$ commutes with $56$.
Removing $56$, we get an element with 
left descent set $\{3\}$ and we write it
\begin{displaymath}
34 *  52341235673412356
\end{displaymath}
Now $\underline{{4}}$ changes to
$\underline{3}$ passing through $34$.
The left descent set of
the remaining element is just $\{2,5\}$ and we can write it as 
\begin{displaymath}
53 *  234123567342356
\end{displaymath}
Now $\underline{{3}}$ changes to
$\underline{5}$ passing through $53$.
Then $\underline{5}$ commutes with $2$. 
The left descent set of
the remaining element is just $\{1,3\}$
so we can commute $\underline{5}$ with $1$.
The left descent set of
the remaining element is just $\{3\}$ and we can write it as 
\begin{displaymath}
35 *  42356742356
\end{displaymath}
Now $\underline{{5}}$ changes to
$\underline{3}$ passing through $35$.
The left descent set of
the remaining element is just $\{2,4\}$ and we can write it as 
\begin{displaymath}
43 *  235674356
\end{displaymath}
Now $\underline{{3}}$ changes to
$\underline{4}$ passing through $43$
and this $\underline{4}$ commutes with $2$.
We write  $35674356$ as $34567356$ and 
$\underline{{4}}$ changes to
$\underline{3}$ passing through $34$.
We write $567356$  as $536756$ and 
$\underline{{3}}$ changes to
$\underline{5}$ passing through $53$.
We write $6756$  as $6576$ and 
$\underline{{5}}$ changes to
$\underline{6}$ passing through $56$.
Finally, $\underline{{6}}$ changes to
$\underline{7}$ passing through $76$.
We are done!

\subsubsection{\bf Type $E_8$}\label{s5.2.7}

In type $E_8$, we assume that our Dynkin diagram is:

\resizebox{\textwidth}{!}{
$
\xymatrix{s_1\ar@{-}[rr]&&s_2\ar@{-}[rr]&&
s_{3}\ar@{-}[rr]\ar@{-}[d]&&s_{5}\ar@{-}[rr]
&&s_{6}\ar@{-}[rr]&&s_{7}\ar@{-}[rr]&&s_{8}\\
&&&&s_{4}&&&&} 
$
}

The element $w_0$ is given by $w_0=w'_0u$, 
where $w'_0$ is the longest element in type $E_7$
and $u$ is the product of the $s_i$'s in which the indices
go along the following word:
\begin{displaymath}
876534213253465321765342356787653421325346532176534235678
\end{displaymath}
Also $f$ is the identity. From type $E_7$, we know that 
$\underline{s_{i}}w'_0=w'_0\underline{s_{i}}$, for $i=1,2,3,4,5,6,7$.
Now we need to commute all these $\underline{s_{i}}$ past the
additional factor $u$. After that, the element $\underline{s_{8}}$ has to be 
considered separately. To save space, we only consider the indices.

We start with $\underline{1}$. It commutes with $876534$ 
and changes to $\underline{2}$ passing through $21$.
This changes to  $\underline{3}$ passing through $32$
and then to  $\underline{5}$ passing through $53$.
This $\underline{5}$ commutes with $4$ and 
changes to  $\underline{6}$ passing through $65$.
This $\underline{6}$ commutes with $321$ and 
changes to  $\underline{7}$ passing through $76$.
The $\underline{7}$ commutes with $534235$ and 
changes to  $\underline{6}$ passing through $67$.
This $\underline{6}$ commutes with $8$ and 
changes to  $\underline{7}$ passing through $76$.
The $\underline{7}$ commutes with $5342132534$.
Now we swap $5321$ and $7$ after $6$ as they commute
and after that we move $\underline{7}$ through $67$
changing it to $\underline{6}$.
Now we swap $321$ and $6$ after $5$ as they commute
and after that we move $\underline{6}$ through $56$
changing it to $\underline{5}$.
Now we swap $21$ and $5$ after $3$ as they commute
and after that we move $\underline{5}$ through $35$
changing it to $\underline{3}$.
Next we swap $1$ and $3$ after $2$ as they commute
and after that we move $\underline{3}$ through $23$
changing it to $\underline{2}$.
Next we swap $4$ and $2$ after $1$ as they commute
and after that we move $\underline{2}$ through $12$
changing it to $\underline{1}$. 
It remains to pull $\underline{1}$ on the right
through $435678$. We have checked that 
$\underline{1}$ commutes with $w_0$.

We continue with $\underline{2}$. It commutes with $8765$.
We swap $4$ and $2$ after $3$ and then $\underline{2}$
changes to $\underline{3}$ passing through $32$.
We swap $1$ and $3$ after $4$ and then $\underline{3}$
changes to $\underline{4}$ passing through $43$.
This $\underline{4}$ commutes with $125$ and changes
to  $\underline{3}$ passing through $34$.
This $\underline{3}$ commutes with $6$
and changes to  $\underline{5}$ passing through $53$.
This $\underline{5}$ commutes with $217$
and changes to  $\underline{6}$ passing through $65$.
This $\underline{6}$ commutes with $3423$
and changes to  $\underline{5}$ passing through $56$.
This $\underline{5}$ commutes with $787$
and changes to  $\underline{6}$ passing through $65$.
This $\underline{6}$ commutes with $342132$.
Now we swap $34$ and $6$ after $5$ and move
$\underline{6}$ through $65$ changing it to $\underline{5}$.
Now we swap $5$ and $4$ after $3$ and move
$\underline{5}$ through $35$ changing it to $\underline{3}$.
This $\underline{3}$ changes to  $\underline{4}$ 
passing through $43$.
This $\underline{4}$ commutes with $21765$
and changes to  $\underline{3}$ passing through $34$.
This $\underline{3}$ changes to  $\underline{2}$ 
passing through $23$ and we, finally, pull
it on the right through $5678$. We have checked that 
$\underline{2}$ commutes with $w_0$.

Next we do $\underline{3}$. It commutes with $876$
and changes to  $\underline{5}$  passing through $53$.
This $\underline{5}$ commutes with $421$.
We swap $2$ and $5$ after $3$ and move 
$\underline{5}$  through $35$ changing it to  $\underline{3}$,
which, in turn, changes to $\underline{2}$ 
passing through $23$.
This $\underline{2}$ commutes with $465$
and changes to  $\underline{3}$ passing through $32$.
This $\underline{3}$ commutes with $176$
and changes to  $\underline{5}$ passing through $53$.
This $\underline{5}$ commutes with $42$
and changes to  $\underline{3}$ passing through $35$.
This $\underline{3}$ commutes with $67876$
and changes to  $\underline{5}$ passing through $53$.
This $\underline{5}$ commutes with $421$.
and changes to  $\underline{5}$ passing through $53$.
We swap $2$ and $5$ after $3$ and move 
$\underline{5}$  through $35$ changing it to  $\underline{3}$.
This $\underline{3}$ changes to  $\underline{2}$ passing through $23$.
This $\underline{2}$ commutes with $465$.
and changes to  $\underline{3}$ passing through $32$.
This $\underline{3}$ commutes with $176$.
and changes to  $\underline{5}$ passing through $53$.
This $\underline{5}$ commutes with $42$.
and changes to  $\underline{3}$ passing through $35$.
Now we cal pull $\underline{3}$ out on the right
through $678$. We have checked that 
$\underline{3}$ commutes with $w_0$.

We move to  $\underline{4}$. It commutes with $8765$
and changes to  $\underline{3}$  passing through $34$.
We swap $1$ and $3$ after $2$ and move 
$\underline{3}$  through $23$ changing it to  $\underline{2}$.
This $\underline{2}$ changes to $\underline{1}$ passing through
$12$. This $\underline{1}$ commutes with $534653$
and then changes to $\underline{2}$ passing through
$21$. This $\underline{2}$ commutes with $765$.
We swap $2$ and $4$ after $3$ and move 
$\underline{2}$  through $32$ changing it to  $\underline{3}$.
This $\underline{3}$ changes to $\underline{4}$ passing through
$43$. This $\underline{4}$ commutes with $5678765$
and then changes to $\underline{3}$ passing through $34$. 
We swap $1$ and $3$ after $2$ and move 
$\underline{3}$  through $23$ changing it to  $\underline{2}$.
This $\underline{2}$ changes to $\underline{1}$ passing through
$12$. This $\underline{1}$ commutes with $534653$
and then changes to $\underline{2}$ passing through $21$. 
This $\underline{2}$ commutes with $765$.
We swap $2$ and $4$ after $3$ and move 
$\underline{2}$  through $32$ changing it to  $\underline{3}$.
This $\underline{3}$ changes to $\underline{4}$ passing through
$43$. Finally, we can pull $\underline{4}$ out through
$5678$. We have checked that 
$\underline{4}$ commutes with $w_0$.

Our next element is  $\underline{5}$. It commutes with $87$
and then changes to $\underline{6}$ passing through $65$. 
This $\underline{6}$ commutes with $5342132$.
We swap $34$ and $6$ after $5$ and move 
$\underline{6}$  through $56$ changing it to  $\underline{5}$.
We swap $4$ and $5$ after $3$ and move 
$\underline{5}$  through $35$ changing it to  $\underline{3}$.
This $\underline{3}$ changes to $\underline{4}$ passing through
$43$. This $\underline{4}$ commutes with $21765$
and then changes to $\underline{3}$ passing through
$34$. This $\underline{3}$ changes to $\underline{2}$ passing through
$23$. This $\underline{2}$ commutes with $5678765$.
We swap $4$ and $2$ after $3$ and move 
$\underline{2}$  through $32$ changing it to  $\underline{3}$.
We swap $1$ and $3$ after $4$ and move 
$\underline{3}$  through $43$ changing it to  $\underline{4}$.
This $\underline{4}$ commutes with $125$
and then changes to $\underline{3}$ passing through
$34$. This $\underline{3}$ commutes with $6$
and then changes to $\underline{5}$ passing through
$53$. This $\underline{5}$ commutes with $217$
and then changes to $\underline{6}$ passing through
$65$. This $\underline{6}$ commutes with $3423$
and then changes to $\underline{5}$ passing through
$56$. Finally, we pull $\underline{5}$ out on the right
through $78$. We have checked that 
$\underline{5}$ commutes with $w_0$.

Our next element is  $\underline{6}$. It commutes with $8$
then changes to $\underline{7}$ passing through
$76$. This $\underline{7}$ commutes with $5342132534$.
We swap $5321$ with $7$ after $6$ and then move 
$\underline{7}$ through
$67$ changing it to $\underline{6}$. 
We swap $321$ with $6$ after $5$ and then move 
$\underline{6}$ through
$56$ changing it to $\underline{5}$. 
We swap $21$ with $5$ after $3$ and then move 
$\underline{5}$ through
$35$ changing it to $\underline{3}$. 
We swap $1$ with $3$ after $2$ and then move 
$\underline{3}$ through
$23$ changing it to $\underline{2}$. 
We swap $4$ with $2$ after $1$ and then move 
$\underline{2}$ through
$12$ changing it to $\underline{1}$. 
This $\underline{1}$ commutes with $43567876534$.
and then changes to $\underline{2}$ passing through 
$21$. This $\underline{2}$ changes to $\underline{3}$ 
passing through  $32$.
This $\underline{3}$ changes to $\underline{4}$ 
passing through  $53$.
This $\underline{5}$ commutes with $4$
and then changes to $\underline{6}$ passing through 
$65$. This $\underline{6}$ commutes with $321$
and then changes to $\underline{7}$ passing through 
$76$. This $\underline{7}$ commutes with $534235$
and then changes to $\underline{6}$ passing through 
$67$. Finally, we pull $\underline{6}$ out on the right
through $8$. We have checked that 
$\underline{6}$ commutes with $w_0$.

Our next element is  $\underline{7}$. It changes to
$\underline{8}$ passing through  $87$.
This $\underline{8}$ commutes with $6534213253465321$.
There is a $787$ subword about $10$ letters ahead which
we change to $878$ and then pull the left $8$ all the
way to the left to our coming $7$. Now we can move
$\underline{8}$ through  $78$ changing it to $\underline{7}$.
Next we see a $6786$ subword a few symbols ahead that we change to
$7678$ and pull the left $7$ all the way to the coming $6$.
Now we can move $\underline{7}$ through  $67$ changing 
it to $\underline{6}$.
Next we see a $56785$ subword a few symbols ahead that we change to
$65678$ and pull the left $6$ all the way to the coming $5$.
Now we can move $\underline{6}$ through  $56$ changing 
it to $\underline{5}$.
Next we see a $356783$ subword a few symbols ahead that we change to
$535678$ and pull the left $5$ all the way to the coming $3$.
Now we can move $\underline{5}$ through  $35$ changing 
it to $\underline{3}$. Now we swap $2$ and $4$.
and move the second occurrence of $4$ to the left
hitting $3$. This produces a $434$ subword that we
change to $343$. Now we can move $\underline{3}$
through $23$ changing it to $\underline{2}$
which, in turn, commutes with $4$. Now we
swap $5678$ with $2$ after $3$ and move 
$\underline{2}$ through $32$ changing it to $\underline{3}$.
Now we swap $6781$ with $3$ after $5$ and move 
$\underline{3}$ through $53$ changing it to $\underline{5}$.
Now we swap $7812$ with $5$ after $6$ and move 
$\underline{5}$ through $65$ changing it to $\underline{6}$.
Now we swap $81234$ with $6$ after $7$ and move 
$\underline{6}$ through $76$ changing it to $\underline{7}$.
Now we swap $12345321$ with $7$ after $8$ and move 
$\underline{7}$ through $87$ changing it to $\underline{8}$.
This $\underline{8}$ commutes with $1234532165342356$
and then changes to $\underline{7}$ passing through
$78$. We have checked that 
$\underline{7}$ commutes with $w_0$.

It remains to check $\underline{s_{8}}$. Let
$w''_0$ be the longest element for type $E_6$.
Then we have $\underline{s_{8}}w''_0=w''_0\underline{s_{8}}$
due to commutativity. Now we have to move 
$\underline{s_{8}}$ through the product of the additional
factor in type $E_7$ followed by the additional factor in
type $E_8$. Again, we use SageMath for computation of 
reduced words and left descent sets.

The product of two additional factors is given by the following:

\resizebox{\textwidth}{!}{
$
765342132534653217653423567
876534213253465321765342356787653421325346532176534235678
$
}

which we rewrite as

\resizebox{\textwidth}{!}{
$
765342132534653217653423567
876534213253465321765342356787653421325346532176534235678
$
}

Here the left descent set is $\{7,8\}$, so we can 
move $\underline{8}$ through the prefix 
$878$ changing it to $\underline{7}$ and the remaining
suffix is

\resizebox{\textwidth}{!}{
$
653423567123564352341235678765342356712356435
234123567865342356712356435234123567
$
} 

This has left descent set $\{6\}$ and  $67$ is a prefix, 
so we can  move $\underline{7}$ through this prefix 
changing it to $\underline{6}$ and the remaining
suffix is

\resizebox{\textwidth}{!}{
$
5342356712356435234123567876534235671235643523412356785342356712356435234123567
$
}

This has left descent set $\{5\}$ and  $56$ is a prefix, 
so we can  move $\underline{6}$ through this prefix 
changing it to $\underline{5}$ and the remaining
suffix is

\resizebox{\textwidth}{!}{
$
34235671235643523412356787653423567123564352341235678342356712356435234123567
$
}

This has left descent set $\{3\}$ and  $35$ is a prefix, 
so we can  move $\underline{5}$ through this prefix 
changing it to $\underline{3}$ and the remaining
suffix is

\resizebox{\textwidth}{!}{
$
423567123564352341235678765342356712356435234123567842356712356435234123567
$
}

This has left descent set $\{2,4\}$ and  $43$ is a prefix, 
so we can  move $\underline{3}$ through this prefix 
changing it to $\underline{4}$ and the remaining
suffix is

\resizebox{\textwidth}{!}{
$
2356712356435234123567876534235671235643523412356782356712356435234123567
$
}

This has left descent set $\{2\}$ and $\underline{4}$ commutes
with it, so we just remove this $2$. In what remains, 
the left descent set is $\{1,3\}$ and $\underline{4}$ commutes
with $1$, so we just remove this $1$ and the following suffix remains:
\begin{displaymath}
35672356435234123567876534235671235643523412356782356712356435234123567 
\end{displaymath}
This has left descent set $\{3\}$ and  $34$ is a prefix, 
so we can  move $\underline{4}$ through this prefix 
changing it to $\underline{3}$ and the remaining
suffix is
\begin{displaymath}
567235643523412356787653423567123564352341235678356712356435234123567
\end{displaymath}
This has left descent set $\{2,5\}$ and  $23$ is a prefix, 
so we can  move $\underline{3}$ through this prefix 
changing it to $\underline{2}$ and the remaining
suffix is
\begin{displaymath}
5673564352341235678765342356712356435234123567835672356435234123567
\end{displaymath}
This has left descent set $\{5\}$ and $\underline{2}$ commutes
with it, so we just remove it. In the remainder,
the left descent set is $\{3,6\}$ and $\underline{2}$ commutes
with $6$, so we just remove it. In the remainder,
the left descent set contains $7$ and $\underline{2}$ commutes
with $7$, so we just remove it. In the remainder,
the left descent set contains $8$ and $\underline{2}$ commutes
with $8$, so we just remove it. The remaining suffix is
\begin{displaymath}
356435234123567865342356712356435234123567835672356435234123567
\end{displaymath}
This has left descent set $\{3\}$ and  $32$ is a prefix, 
so we can  move $\underline{2}$ through this prefix 
changing it to $\underline{3}$ and the remaining
suffix is
\begin{displaymath}
5643523412356786534235671235643523412356785672356435234123567
\end{displaymath}
This has left descent set $\{4,5\}$ and  $43$ is a prefix, 
so we can  move $\underline{3}$ through this prefix 
changing it to $\underline{4}$ and the remaining
suffix is
\begin{displaymath}
56352341235678653423567123564352341235678567356435234123567
\end{displaymath}
We remove the first $5$ as it commutes with $\underline{4}$.
What remains has the left descent set $\{3,6\}$ and
$\underline{4}$ commutes with $6$, so we remove it.
What remains has $7$ in the left descent set and
$\underline{4}$ commutes with $7$, so we remove it.
The remaining suffix is
\begin{displaymath}
35234123567853423567123564352341235678567356435234123567
\end{displaymath}
and it has left descent set $\{3\}$ with $34$ being a prefix.
so we can  move $\underline{4}$ through this prefix 
changing it to $\underline{3}$ and the remaining
suffix is
\begin{displaymath}
523412356785342356712356435234123567867356435234123567
\end{displaymath}
This has left descent set $\{2,5\}$ with $23$ being a prefix.
so we can  move $\underline{3}$ through this prefix 
changing it to $\underline{2}$ and the remaining
suffix is
\begin{displaymath}
5341235678534235671235643523412356786756435234123567
\end{displaymath}
As $\underline{2}$ commutes with $5$, we delete $5$.
What remains has $6$ in the left descent set and
$\underline{2}$ commutes with $6$, so we remove this $6$.
The remainder has left descent set $\{1,3\}$ with $12$ being a prefix.
so we can  move $\underline{2}$ through this prefix 
changing it to $\underline{1}$ and the remaining
suffix is
\begin{displaymath}
342356783423567123564352341235678675635234123567
\end{displaymath}
Here we can move $\underline{1}$ past $34$. 
The remainder has $5$ in the left descent set and
$\underline{1}$ commutes with $5$, so we remove this $5$.
The next remainder has $3$ in the left descent set and
$\underline{1}$ commutes with $3$, so we remove this $3$.
The remainder has $\{2\}$ as the left descent set an
$21$ as a prefix. We move $\underline{1}$ through this prefix 
changing it to $\underline{2}$ and the remaining
suffix is
\begin{displaymath}
356784356723564352341235678675635234123567
\end{displaymath}
This has $\{3\}$ as the left descent set an
$32$ as a prefix. We move $\underline{2}$ through this prefix 
changing it to $\underline{3}$ and the remaining
suffix is
\begin{displaymath}
5678435672356435234123567875635234123567
\end{displaymath}
This has $\{4,5\}$ as the left descent set an
$43$ as a prefix. We move $\underline{3}$ through this prefix 
changing it to $\underline{4}$ and the remaining
suffix is
\begin{displaymath}
56783567235643523412356785635234123567
\end{displaymath}
As $\underline{4}$ commutes with $5678$, we just remove the latter.
The remainder has descent set $\{3\}$ and
$34$ as a prefix.  We move $\underline{4}$ through this prefix 
changing it to $\underline{3}$ and the remaining
suffix is
\begin{displaymath}
56723564352341235678635234123567
\end{displaymath}
This has $\{2,5\}$ as the left descent set an
$23$ as a prefix. We move $\underline{3}$ through this prefix 
changing it to $\underline{2}$ and the remaining
suffix is
\begin{displaymath}
567356435234123567835234123567
\end{displaymath}
Here $\underline{2}$ commutes with $567$ so we remove this.
The remainder has $\{3\}$ as the left descent set an
$32$ as a prefix. We move $\underline{2}$ through this prefix 
changing it to $\underline{3}$ and the remaining
suffix is
\begin{displaymath}
5643523412356785234123567
\end{displaymath}
This has $\{4,5\}$ as the left descent set an
$43$ as a prefix. We move $\underline{3}$ through this prefix 
changing it to $\underline{4}$ and the remaining
suffix is
\begin{displaymath}
56352341235678234123567
\end{displaymath}
Here $\underline{4}$ commutes with $56$ so we remove this part.
What remains  has $\{3\}$ as the left descent set an
$34$ as a prefix. We move $\underline{4}$ through this prefix 
changing it to $\underline{3}$ and the remaining
suffix is
\begin{displaymath}
5234123567834123567
\end{displaymath}
This  has $\{2,5\}$ as the left descent set and
$53$ as a prefix. We move $\underline{3}$ through this prefix 
changing it to $\underline{5}$ and the remaining
suffix is
\begin{displaymath}
23412356783423567
\end{displaymath}
Here $2$ commutes with $\underline{5}$, so we delete it.
The remainder has $\{1,3\}$ as the left descent set and
$1$ commutes with $\underline{5}$, so we remove this $1$
resulting in 
\begin{displaymath}
342356783423567
\end{displaymath}
This has $\{3\}$ as the left descent set and prefix 
$35$. We move $\underline{5}$ through this prefix 
changing it to $\underline{3}$ and the remaining
suffix is
\begin{displaymath}
4235678423567
\end{displaymath}
This has $\{2,4\}$ as the left descent set and prefix 
$43$. We move $\underline{3}$ through this prefix 
changing it to $\underline{4}$ and the remaining
suffix is
\begin{displaymath}
23567843567
\end{displaymath}
Here $\underline{4}$ commutes with $2$ so we remove
the latter. The remainder has $\{3\}$ 
as the left descent set and prefix 
$34$. We move $\underline{4}$ through this prefix 
changing it to $\underline{3}$ and the remaining
suffix is $56783567$. We rewrite this as $53657687$
and now we move $\underline{3}$ past $53$
changing it to $\underline{5}$ then past $65$
changing it to $\underline{6}$ then past $76$
changing it to $\underline{7}$ and, finally,  past $87$
changing it to $\underline{8}$. We are done!

\subsection{Proof of Lemma~\ref{lem-tilt2-1}}\label{s5.9}

By \cite[Theorem~7.3]{J-Z}, sending $s$ to $\mathcal{L}\mathbf{C}_{s}$
and  $\underline{s}$ to $\theta_s$, for $s\in S$, defines a weak
action of $SB$ on $\mathcal{O}_0$. Consequently, 
the result of Subsection~\ref{s5.2} implies that 
\begin{displaymath}
\theta_s\circ \mathcal{L}\mathbf{C}_{w_0^\mathfrak{p}} \cong
\mathcal{L}\mathbf{C}_{w_0^\mathfrak{p}}\circ \theta_{f(s)}.
\end{displaymath}
The claim of Lemma~\ref{lem-tilt2-1} follows.


\noindent
V.~M.: Department of Mathematics, Uppsala University, Box. 480,
SE-75106, Uppsala, SWEDEN, email: {\tt mazor\symbol{64}math.uu.se}

\noindent
S.~S.: Department of Mathematics, 
Indian Institute of Technology, Dharwad, 580011,
INDIA, email: {\tt maths.shraddha\symbol{64}gmail.com}

\end{document}